\documentclass[10pt]{article}
\usepackage[dvipdfmx]{graphicx}
\usepackage{amsmath}
\usepackage{amssymb}
\usepackage{amsthm}

\usepackage{amsfonts}
\usepackage[none]{hyphenat}
\usepackage{hyperref}

	\usepackage[mathlines]{lineno}

\usepackage{bm}
\usepackage{url}
\usepackage{float}
\usepackage{mathtools}
\usepackage{subcaption}
\usepackage[usenames]{color}
\usepackage[table,xcdraw]{xcolor}
\usepackage{nicematrix}
\newtheorem{dfn}{Definition}[section]
\newtheorem{thm}[dfn]{Theorem}
\newtheorem{prop}[dfn]{Proposition}
\newtheorem{lem}[dfn]{Lemma}

\newtheorem{rem}[dfn]{Remark}

\newcommand{\R}{\mathbb{R}}
\newcommand{\C}{\mathbb{C}}
\newcommand{\N}{\mathbb{N}}
\newcommand{\Z}{\mathbb{Z}}
\newcommand{\cT}{\mathcal{T}}
\newcommand{\cC}{\mathcal{C}}
\newcommand{\cG}{\mathcal{G}}
\newcommand{\cX}{\mathcal{X}}
\newcommand{\cY}{\mathcal{Y}}
\newcommand{\bx}{\bar{x}}
\newcommand{\tx}{\tilde{x}}
\newcommand{\ba}{\bar{a}}

\newcommand{\bydef}{\stackrel{\mbox{\tiny\textnormal{\raisebox{0ex}[0ex][0ex]{def}}}}{=}} 
\DeclareMathOperator{\Log}{Log}
\DeclareMathOperator{\Arg}{Arg}

\DeclareMathOperator{\Sqrt}{Sqrt}
\newcommand{\norm}[1]{\left\lVert#1\right\rVert}
\newtheorem{theorem}{Theorem}

\numberwithin{equation}{section}
\usepackage[top=1in, bottom=1in, left=1in, right=1in]{geometry}

\begin{document}
\title{
Spatially inhomogeneous two-cycles in an integrodifference equation}
\author{
Kevin Church \thanks{Centre de Recherches Mathématiques, Université de Montréal, 
Pavillon André-Aisenstadt,
2920 chemin de la Tour,
Montréal, QC, H3T 1J4, Canada (\texttt{kevin.church@umontreal.ca})
}
\and
Kevin Constantineau \thanks{Department of Mathematics and Statistics, McGill University, 805 Sherbrooke West, Montreal, QC, H3A 0B9, Canada (\texttt{kevin.constantineau@mail.mcgill.ca})
}
\and
Jean-Philippe~Lessard \thanks{Department of Mathematics and Statistics, McGill University, 805 Sherbrooke West, Montreal, QC, H3A 0B9, Canada (\texttt{jp.lessard@mcgill.ca})
}
}
\date{\today}
\maketitle

\begin{abstract}

In this work, we prove the existence of a 2-cycle in an integrodifference equation with a Laplace kernel and logistic growth function, connecting two non-trivial fixed points of the second iterate of the logistic map in the non-chaotic regime. This model was first studied by Kot (1992), and the 2-cycle we establish corresponds to one numerically observed by Bourgeois, Leblanc, and Lutscher (2018) for the Ricker growth function. We provide strong evidence that the 2-cycle for the Ricker growth function can be rigorously proven using a similar approach. Finally, we present numerical results indicating that both 2-cycles exhibit spectral stability.
\end{abstract}

\paragraph{Keywords:} Integro-difference equations, Periodic cycles, Population dynamics, Computer-assisted proofs, Invariant manifolds

\paragraph{AMS subject classifications:} 45J05, 37C25, 92D25, 37D10, 65G40, 65T40



\section{Introduction} \label{sec:intro}

Understanding how populations or quantities change over space and time is a central problem in ecology, epidemiology, and urban planning. In many cases, this requires not only observing distributions but also predicting their evolution under growth and dispersal mechanisms. For example, an understanding of motor vehicle crash hotspots \cite{BIL201982} can help identify the most dangerous roads in a city, suggesting some candidates for traffic calming measures. Identifying migratory corridors of endangered species, such as the monarch butterfly \cite{TKB2019}, are important for their conservation. Areas with high human population density in the United States were connected with relatively earlier outbreaks of COVID-19 \cite{CFPSRS2020}. In each case, understanding the spatial density--whether of vehicle collisions, butterflies, or infections--can guide decision-making, and having a model that predicts how these populations or quantities evolve across space further improves this process. One way to mathematically formalize this spatial-temporal evolution is through integro-difference equations \cite{KOTSCHAFFER}, which provide a natural framework to describe how populations or other quantities grow and disperse across space in discrete time. These equations are often viewed as discrete-time analogues of reaction-diffusion equations, capturing both local growth and nonlocal dispersal in a unified model. In the scalar case, the object of interest is an equation
\begin{equation} \label{IDE1}\tag{IDE}
N_{t+1}(x) = Q[N_t](x)= \int_\R K(x-y)F(N_t(y))dy,
\end{equation}
where $K$ is the ``dispersal kernel", and $F$ describes single-generation growth dynamics. The dispersal kernel is typically required to be non-negative with unit $L^1$ norm. When a population evolves across discrete generations, a model of this type can be appropriate. For background on these equations in the context of ecological dynamics, we refer the reader to the monograph \cite{Lutscher2019}. These equations can produce a variety of structured solutions, some of which are more analytically tractable than others. One well-studied class consists of biological ``invasions", where travelling wave solutions $N_t(x)=W(x-ct)$ are sought, with $c$ the invasion speed and $W$ a static profile. Extensive work has been devoted to analyzing these solutions from both theoretical \cite{Bourgeois2018,Kot1992,Kot1996,Li2016} and data-driven perspectives \cite{Lewis2016}. More recently, research has explored periodic travelling waves in periodic environments \cite{Lin2020}. 

In this work, we focus on 2-cycles, which are solutions $N_t$ satisfying $N_{t+2} = N_t$ for all $t$, but with $N_{t+1} \neq N_t$. When the non-spatial model $n_{t+1}=F(n_t)$ has a two-cycle alternating between two constants $n_-<n_+$, equation \eqref{IDE1} trivially also has a two-cycle that is spatially homogeneous, alternating between $n_-$ and $n_+$ across generations. Less trivial to identify are 2-cycles that are spatially inhomogeneous -- that is, for which $x\mapsto N_t(x)$ is not constant. In 2018, Bourgeois, Leblanc and Lutscher \cite{Bourgeois2018} numerically found a standing wave of the second iterate operator associated with \eqref{IDE1}, for Laplace dispersal kernel $K(u)=\frac{\sigma}{2}e^{-\sigma|u|}$ and Ricker growth function $F(u)=u\exp(\rho(1-u))$, for parameters $\sigma=10$ and $\rho=2.2$. The standing wave connects the states of the two-cycle $\{n_-,n_+\}$ of the associated discrete-time Ricker map $n_{t+1}=F(n_t)$. This standing wave is equivalent to a 2-cycle in \eqref{IDE1}. To our knowledge, there is no mathematical proof for the existence of this, or any nontrivial (spatially inhomogeneous) 2-cycle in the model \eqref{IDE1}.

Building on these observations, we rigorously prove in the present work the existence of a spatially inhomogeneous 2-cycle using a computer-assisted approach. The precise statement is given in the following theorem.
\begin{thm}\label{thm-logistic}
With the Laplace-type dispersal kernel $K(u)=\frac{\sigma}{2}e^{-\sigma|u|}$ and logistic growth function $F(u)=(1+\rho)u - \rho u^2$, the integrodifference equation \eqref{IDE1} has, for $\rho=2.2$ and any $\sigma>0$, a 2-cycle $N_t$ satisfying:
\begin{itemize}
\item $n_-=\lim_{x\rightarrow -\infty}N_0(x)=\lim_{x\rightarrow +\infty}N_1(x)$
\item $n_+=\lim_{x\rightarrow +\infty}N_0(x)=\lim_{x\rightarrow -\infty}N_1(x)$,
\end{itemize}
where $\{n_-,n_+\}$ is the 2-cycle of the non-spatial logistic growth model $n_{t+1}=F(n_t)$, satisfying $n_-<n_+$.
\end{thm}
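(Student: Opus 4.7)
The plan is to convert the 2-cycle problem into a rigorously verifiable heteroclinic connection problem for a $4$-dimensional ODE system. First I would observe that the scaling $\tilde x = \sigma x$ reduces \eqref{IDE1} to the same equation with $\sigma=1$, so it suffices to prove the theorem in that case. Next, since the Laplace kernel is the Green's function of $-\partial_x^2+1$, the relations $N_1 = Q[N_0]$ and $N_0 = Q[N_1]$ that characterise a 2-cycle are equivalent, for $(u_1,u_2) := (N_0,N_1)$, to the coupled system
\begin{equation*}
u_1'' = u_1 - F(u_2), \qquad u_2'' = u_2 - F(u_1),
\end{equation*}
with boundary conditions $(u_1,u_2)(x) \to (n_-,n_+)$ as $x \to -\infty$ and $(u_1,u_2)(x) \to (n_+,n_-)$ as $x \to +\infty$. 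Written as a first-order system in $\R^4$ with $v_i=u_i'$, the 2-cycle is a heteroclinic orbit between the equilibria $p_\pm = (n_\mp,n_\pm,0,0)$.

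For $\rho=2.2$, the 2-cycle $\{n_-,n_+\}$ of the logistic map is hyperbolic, and a short Jacobian computation shows that each $p_\pm$ has a $2$-dimensional unstable and a $2$-dimensional stable manifold in the linearised ODE, so the expected count for an isolated heteroclinic is correct. I would then use the parameterisation method (Cabré--Fontich--de la Llave) to obtain high-order polynomial representations of the local unstable manifold $W^u_{\mathrm{loc}}(p_-)$ and the local stable manifold $W^s_{\mathrm{loc}}(p_+)$, together with validated tail bounds on the parameterisation residual. The intermediate portion of the trajectory, restricted to a finite interval $[-L,L]$, I would represent by truncated Chebyshev series in each of the four components. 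The unknown vector $a$ collects the Chebyshev coefficients, the manifold parameters at $\pm L$, and the matching data, and the 2-cycle corresponds to a zero of an explicit nonlinear map $\Phi(a)$ which encodes the ODE via the Chebyshev differentiation operator and the endpoint matching with the two local manifolds.

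Given a good numerical approximation $\bar a$ obtained by Newton's method, I would close the proof with a standard radii-polynomial / Newton--Kantorovich argument: construct an approximate inverse $A$ of $D\Phi(\bar a)$, bound $\|A\,\Phi(\bar a)\|$, $\|I - A\,D\Phi(\bar a)\|$, and a Lipschitz-type bound for $A\,D^2\Phi$ on a closed ball of radius $r$ in a weighted $\ell^1$ Banach algebra on the Chebyshev and manifold coefficients, evaluating every quantity in interval arithmetic. A positive root of the resulting radii polynomial then produces a unique true zero near $\bar a$, which by construction assembles into the desired 2-cycle. The main obstacle will be obtaining a numerical solution whose accuracy and whose truncation tails (in both the Chebyshev series and the manifold parameterisations) are small enough that these bounds close; the steep transition region between the two equilibria is where the Chebyshev truncation order and the half-length $L$ must be tuned carefully. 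The involution symmetry $(u_1(x),u_2(x)) \mapsto (u_2(-x),u_1(-x))$, under which the heteroclinic is expected to be invariant, can be exploited to roughly halve the number of unknowns and tighten the interval enclosures.
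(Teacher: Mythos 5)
Your overall strategy is the one the paper follows: reduce the 2-cycle of \eqref{IDE1} to a heteroclinic orbit of the four-dimensional system \eqref{EQ:4DODE} via the Green's-function property of the Laplace kernel, compute local invariant manifolds with the parameterization method (Taylor series), represent the finite-time segment by Chebyshev series, and close the argument with a Newton--Kantorovich / radii-polynomial contraction evaluated in interval arithmetic. The scaling argument in $\sigma$ and the eigenvalue count at the equilibria also match the paper.

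However, as you have formulated it, the zero-finding map $\Phi$ has no isolated zeros, and the contraction step would fail. With $L$ fixed, matching a Chebyshev segment to $W^u_{\mathrm{loc}}(p_-)$ at $-L$ and to $W^s_{\mathrm{loc}}(p_+)$ at $+L$ with two free parameters on each manifold gives a balanced count (eight boundary equations against four manifold parameters plus the four-dimensional initial condition), but time translation of the true heteroclinic produces a one-parameter curve of solutions: shifting the orbit slightly still lands both endpoints inside the local manifold patches. Hence $D\Phi(\bar a)$ has a (numerical) kernel along the translation direction, no injective approximate inverse $A$ with $\|I-A\,D\Phi(\bar a)\|$ small can exist, and the radii polynomial cannot be made negative. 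You must add a phase/anchoring condition. The paper does exactly this through the ingredient you mention only as an optional optimization: the reversor $R(x_1,x_2,x_3,x_4)=(x_3,-x_4,x_1,-x_2)$ is used to pose the \emph{half}-orbit problem \eqref{eq:reduced_problem} with the symmetric boundary condition $x(0)=Rx(0)$, which removes translation invariance (the paper proves that a translation-invariant solution of the reduced problem must be constant); only the stable manifold of $\tilde x^{(+)}$ is then needed, the half-length $L$ is treated as an unknown, and the normalization $\theta_1^2+\theta_2^2=0.95$ balances the count. Alternatively you could keep your two-manifold formulation and append an explicit phase condition (for instance fixing one manifold parameter or the norm of $\theta$ while freeing $L$), but some such condition is indispensable for local isolation, not merely a device to halve the number of unknowns.
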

See Figure \ref{figure:logistic-solution} for a plot of the 2-cycle. While this theorem does not confirm the existence of the 2-cycle found numerically in \cite{Bourgeois2018}, which used the Ricker growth function, our method of proof could be modified to accomodate different growth functions. Section \ref{Sec3} presents a large portion of the work in this direction, providing extremely compelling numerical evidence that this strategy would work for the 2-cycle in \cite{Bourgeois2018}.
\begin{figure}[h!]
    \centering
    \includegraphics[width=0.65\textwidth, trim={16cm 0cm 0cm 0cm}, clip]{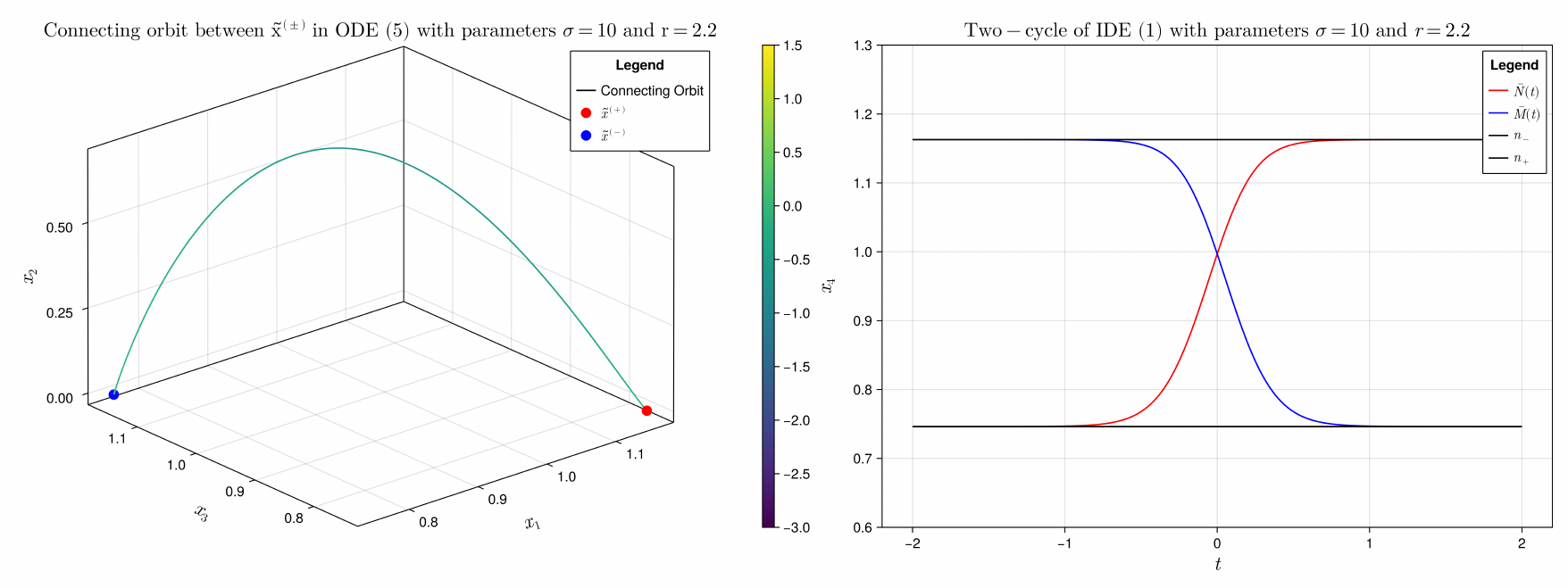}
    \vspace{-.4cm}
    \caption{Two-cycle of \eqref{IDE1}. In red, we have $\bar{N}(t)=N_0(t)$ and in blue $\bar{M}(t)=N_1(t)$. In black, horizontal lines corresponding to $n_\pm$.}\label{figure:logistic-solution}
\end{figure}

The proof of Theorem \ref{thm-logistic} uses a correspondence between 2-cycles of \eqref{IDE1} with Laplace kernel, and connecting orbits of the second-order system of ordinary differential equations
\begin{align}
\label{ODE1}\tag{ODE.1}\ddot x&=\sigma^2(x-F(y))\\
\label{ODE2}\tag{ODE.2}\ddot y&=\sigma^2(y-F(x)).
\end{align}
Essentially, $x=N_0$ and $y=N_1$, so that the limits of each of $x$ and $y$ at $\pm\infty$ correspond to suitably ordered points on the non-spatial 2-cycle of $n_{t+1}=F(n_t)$. From this equation we can also see that the shape parameter $\sigma$ of the Laplace kernel can be scaled out of the differential equations through a reparameterization of the independent variable, which is why our Theorem \ref{thm-logistic} provides connecting orbits for any $\sigma>0$. More fundamentally, the parameter $\sigma$ can be scaled out of \eqref{IDE1} completely by a change of spatial of variables $\hat x = x/\sigma$. 

To establish the existence of the connecting orbit in \eqref{ODE1}--\eqref{ODE2}, we employ the parameterization method \cite{MR1976079,MR1976080,MR2177465} for stable and unstable manifolds of equilibria, combined with a rigorous boundary value problem solver based on Chebyshev series expansions \cite{MR3148084,MR4292534} and a contraction mapping approach. Once this orbit is rigorously validated, the correspondence between the differential equations and \eqref{IDE1} directly implies Theorem \ref{thm-logistic}. Detailed explanations on translating results between the ODE system and the integrodifference equation are provided in Section \ref{sec:2-cycle-to-pvbp}.


The 2-cycle identified numerically in \cite{Bourgeois2018} appears to be (dynamically) stable. In this work, we take initial steps toward a rigorous proof of stability and present compelling numerical evidence supporting it. A central theoretical tool for this analysis is the following theorem.

\begin{thm}\label{thm-stability}
Let $N_t$ be a 2-cycle. Consider the second-iterate map $S=Q\circ Q$, where $Q:C(\R)\rightarrow C(\R)$ is the map associated to \eqref{IDE1}, namely $Q[N](x) = \int_\R K(x-y)F(N(y))dy.$ Let $K$ be the Laplace kernel, where without loss of generality $\sigma=1$. The eigenvalues of $\mathcal{M}=DS[N_0]$ are contained in the disc $\overline{D_\mu(0)}$, with
\begin{align}
\mu&=||F'\circ N_0||_\infty||F'\circ N_1||_\infty.\label{mu-bound}
\end{align}
If $\lambda$ is an eigenvalue of $\mathcal{M}$, then the associated eigenfunction $h$ satisfies
\begin{align}
\label{stability-ode-thm-1-1}\lambda\ddot h&=\lambda h - F'(N_1(x))v\\
\label{stability-ode-thm-1-2}\ddot v&=v-F'(N_0(x))h,
\end{align}
where $v(x)=\int_\R K(x-y)F'(N_0(y))h(y)dy.$ If $|\lambda|\geq1$ and $0<F'(n_+)F'(n_-)<1$, the second-order equation above is asymptotically hyperbolic as $x\rightarrow\pm\infty$.  The asymptotic constant-coefficient systems at $\pm\infty$ share the same eigenvalues and, in particular, have the same intertia. The eigenvalues are $$\xi_{j,k}=(-1)^j \sqrt{1 + (-1)^k\sqrt{F'(n_+)F'(n_-)/\lambda}},$$
for $j,k\in\{0,1\}$, and $\Re(\xi_{1,k})<0<\Re(\xi_{0,k})$ for $k=0,1$.
\end{thm}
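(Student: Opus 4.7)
The plan is to establish the four assertions of the theorem in the natural order: the sup-norm bound $|\lambda|\leq\mu$, the reduction to the ODE system \eqref{stability-ode-thm-1-1}--\eqref{stability-ode-thm-1-2} via a Green's function identity, the asymptotic constant-coefficient analysis, and finally the sign conditions on $\Re(\xi_{j,k})$.

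First I would differentiate $S=Q\circ Q$ at $N_0$ by the chain rule, using $Q[N_0]=N_1$ to get $\mathcal{M}=DQ[N_1]\circ DQ[N_0]$ with $DQ[N]h(x)=\int_\R K(x-y)F'(N(y))h(y)\,dy$. Since $K\geq 0$ and $\int_\R K=1$, the operator norm on $C_b(\R)$ satisfies $\|DQ[N]\|\leq \|F'\circ N\|_\infty$, and composing yields $|\lambda|\leq\|\mathcal{M}\|\leq\mu$ for any bounded eigenfunction $h$ with eigenvalue $\lambda$. I would then set $v:=DQ[N_0]h$, so that $\mathcal{M}h=DQ[N_1]v=\lambda h$. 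The key structural fact is that, with $\sigma=1$, the Laplace kernel is the Green's function of $1-\partial_x^2$, so $(K*f)''=K*f-f$ distributionally. Applying this identity once to $v=K*(F'(N_0)h)$ and once to $\lambda h=K*(F'(N_1)v)$ produces exactly \eqref{stability-ode-thm-1-1}--\eqref{stability-ode-thm-1-2}.

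For the asymptotic analysis, I would recast the pair as a first-order linear system in $(h,\dot h,v,\dot v)$ whose coefficient matrix tends to constant matrices $A_\pm$ as $x\to\pm\infty$. Because $(N_0,N_1)(+\infty)=(n_+,n_-)$ and $(N_0,N_1)(-\infty)=(n_-,n_+)$, the only combination entering the dispersion relation is the symmetric product $F'(N_0)F'(N_1)$, whose limit at either end is $F'(n_+)F'(n_-)$; in particular $A_+$ and $A_-$ share a characteristic polynomial, and hence the same spectrum and inertia. Plugging $(h,v)=e^{\xi x}(H,V)$ into the limiting system and eliminating $(H,V)$ gives $(\xi^2-1)^2=F'(n_+)F'(n_-)/\lambda$; extracting both square roots delivers the stated formula for $\xi_{j,k}$.

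To finish, I would verify the signs of $\Re(\xi_{j,k})$. Setting $w:=\sqrt{F'(n_+)F'(n_-)/\lambda}$, the hypotheses $|\lambda|\geq 1$ and $0<F'(n_+)F'(n_-)<1$ give $|w|<1$, so $\Re(1+(-1)^k w)\geq 1-|w|>0$ for $k=0,1$. Since the principal square root of any complex number with positive real part itself has positive real part (via $\Re\sqrt{z}=\sqrt{(|z|+\Re z)/2}$), the formula for $\xi_{j,k}$ yields $\Re(\xi_{0,k})>0>\Re(\xi_{1,k})$, which gives asymptotic hyperbolicity of $A_\pm$. The calculations are individually elementary; the main delicate point I expect is precisely this square-root bookkeeping, where both strict inequalities in the hypothesis are needed and one must stay on the principal branch of $\sqrt{\cdot}$ to guarantee simultaneously non-vanishing and the correct sign of $\Re(\xi_{j,k})$. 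A secondary care point is the distributional use of $K''=K-\delta$ together with a check that $h$ has enough regularity (which follows from boundedness and the smoothing action of $K$) to interpret $\ddot h$ and $\ddot v$ classically.
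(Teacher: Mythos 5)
Your proposal is correct and follows essentially the same route as the paper: bound $\mathcal{M}=DQ[N_1]\circ DQ[N_0]$ in sup norm using $K\ge 0$, $\int K=1$; use the fact that the Laplace kernel inverts $1-\partial_x^2$ (the paper's Leibniz-rule computation) to derive the coupled second-order system for $(h,v)$; and analyze the limiting constant-coefficient systems, whose characteristic polynomial depends only on the product $F'(N_0)F'(N_1)$, yielding $(\xi^2-1)^2=F'(n_+)F'(n_-)/\lambda$ and the stated $\xi_{j,k}$. Your sign argument (direct, via $|w|<1$ and the principal square root) is a slightly cleaner replacement for the paper's proof by contradiction that no growth mode is imaginary, but the content is the same.
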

When considering the logistic growth function with parameter $\rho=2.2$, a tight upper bound for the radius \eqref{mu-bound} can be computed using the output of the computer-assisted proof of Theorem \ref{thm-logistic}. Since all eigenvalues must lie in the disc $\overline{D_\mu(0)}$, and only eigenvalues with absolute value $\geq 1$ can be destabilizing, we have an annulus in which it suffices to count all eigenvalues. 

One can verify that $DS[N_0]\dot N_0=\dot N_0$, so $1$ is always an eigenvalue of the linearization of the two-steps operator for \eqref{IDE1}. In Section \ref{sec:stability}, we use an Evans function \cite{JonesGardnerAlexander} to numerically verify that $M$ has no other eigenvalues with absolute value $\geq 1$. This holds true for the 2-cycle we have proven for the logistic growth function, and also for a numerical candidate we have identified in the case of the Ricker growth function. We emphasize that neither of these results are proofs, although we believe they could be made rigorous. The Evans function has seen use in studying stability of pulses \cite{Kapitula}, traveling waves \cite{Sandstede2002,Coombes2004} and periodic waves \cite{Oh2010}, for instance. To our knowledge, this is the first work to apply this technique to stability of a 2-cycle in an integrodifference equation. 

\subsection{Overview of the paper}
The structure of this paper is as follows. In Section~\ref{sec:2-cycle-to-pvbp}, we show that proving 2-cycles of \eqref{IDE1} is equivalent to proving the existence of a connecting orbit in a four-dimensional ODE with a symmetry. Section~\ref{Sec3} presents the foundation of the computer-assisted proof: the series methods needed to encode the equivalent connecting orbit.  Section~\ref{sec4} provides the computer-assisted proof. We consider spectral stability of the 2-cycles in Section~\ref{sec:stability}. Concluding remarks follow in Section~\ref{sec-future}.

\subsection{Reproducibility}
The reader may find the codes needed to reproduce the result of this paper at the GitHub repository \cite{github}. 

\section{Analyzing 2-cycles through symmetry-reduced connecting orbits}\label{sec:2-cycle-to-pvbp}
In this section, we provide an equivalence between a 2-cycle of the integrodifference equation \eqref{IDE1} with the Laplace kernel, $K(u)=\frac{\sigma}{2}e^{-\sigma|u|}$, and a connecting orbit in a first-order system of ordinary differential equations. We then provide a simpler characterization, taking advantage of time-reversibility of the system, which shows that one needs only compute one ``half'' of the connecting orbit, subject to an appropriate boundary condition.

We will assume throughout that the growth function $F:\R\rightarrow\R$ is twice continuously differentiable and admits a (non-spatial) 2-cycle: a pair $\{n_+,n_-\}$ with $n_+<n_-$ and satisfying $F(n_{\pm})=n_{\mp}$. We will also assume $0 < F'(n_{-})F'(n_{+}) < 1$, which implies non-oscillatory stability of the fixed points $n_\pm$ of the second-iterate map $x\mapsto F\circ F(x)$. 

A (spatial) \emph{2-cycle of \eqref{IDE1}} is a pair of functions $N,M:\R\rightarrow\R$ with the following properties:
\begin{equation} \label{eq:conditionsC1-C3}
\begin{split}
    &C.1 \quad \lim_{x \to \pm \infty} N(x) = n_{\pm}, \\
    &C.2 \quad \lim_{x \to \pm \infty} M(x) = n_{\mp}, \\
    &C.3 \quad N = Q[M], \quad M = Q[N].
\end{split}
\end{equation}
For example, if $F(u)=(1+\rho)u-\rho u^2$ is the logistic growth function, then for $\rho >2$, the associated (non-spatial) 2-cycle is 
\[
    n_{\pm} = \frac{\rho +2 \pm \sqrt{\rho^2-4}}{2\rho}.
\]
\subsection{An equivalent ordinary differential equation}\label{sec:equivalent_ode}
As suggested in Section \ref{sec:intro}, 2-cycles correspond to connecting orbits of a second-order system of ordinary differential equations. In fact they are equivalent, as we now prove.
\begin{thm} \label{THM:ODE_equivalence}
    Suppose $N,M : \R \rightarrow \R$ are bounded, continuous functions. Then $N,M$ satisfying
    \[
        N = Q[M], \quad
        M = Q[N]
    \]
    implies $M, N$ are twice continuously differentiable and satisfy
    \begin{equation} \label{EQ:ODE}
        \begin{split}
            &\ddot{N} = \sigma^2(N - F(M)), \\
            &\ddot{M} = \sigma^2(M - F(N)),
        \end{split}
    \end{equation}
    where $\ddot{N}(t) = \frac{d^2}{dt^2}N(t)$ and similarly for $\ddot{M}(t)$.   The converse is also true.
\end{thm}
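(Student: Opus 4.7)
The plan hinges on recognizing that the Laplace kernel $K(u)=\tfrac{\sigma}{2}e^{-\sigma|u|}$ is (a scalar multiple of) the Green's function for the second-order operator $\sigma^2 - \partial_x^2$ on $\R$: formally, $(\sigma^2-\partial_x^2)K = \sigma^2\delta_0$. The theorem then amounts to saying that convolution with $K$ inverts this operator on the class of bounded continuous functions. I would prove both implications by extracting and exploiting this single identity.

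For the forward direction, I would start from a generic bounded continuous $g:\R\to\R$ and split the convolution:
\begin{equation*}
Q[g](x) = \frac{\sigma}{2}e^{-\sigma x}\!\int_{-\infty}^{x}\!e^{\sigma y}g(y)\,dy \;+\; \frac{\sigma}{2}e^{\sigma x}\!\int_{x}^{\infty}\!e^{-\sigma y}g(y)\,dy.
\end{equation*}
Boundedness of $g$ makes both integrals absolutely convergent. Differentiating once using the fundamental theorem of calculus, the boundary contributions $\pm\tfrac{\sigma}{2}g(x)$ cancel; differentiating a second time, they add, yielding
\begin{equation*}
\frac{d^2}{dx^2}Q[g](x) = \sigma^2\bigl(Q[g](x)-g(x)\bigr),
\end{equation*}
with $Q[g]\in C^2(\R)$ whenever $g\in C(\R)\cap L^\infty(\R)$. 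Applying this identity to $g=F\circ M$ (which is bounded and continuous since $F\in C^2$ and $M$ is bounded and continuous) gives $\ddot N = \sigma^2(N-F(M))$, and symmetrically for $M$. This also delivers the required $C^2$ regularity of $N$ and $M$ for free.

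For the converse, suppose $N,M$ are bounded, $C^2$, and satisfy \eqref{EQ:ODE}. Set $\tilde N := Q[M]$ and $\tilde M := Q[N]$; since $\|K\|_{L^1}=1$, both $\tilde N$ and $\tilde M$ are bounded, and by the forward direction they satisfy $\ddot{\tilde N}=\sigma^2(\tilde N-F(M))$ and $\ddot{\tilde M}=\sigma^2(\tilde M - F(N))$. Subtracting from the hypothesized ODE, the differences $w_1 := N-\tilde N$ and $w_2 := M-\tilde M$ satisfy $\ddot w_i = \sigma^2 w_i$ on $\R$. The general solution is $Ae^{\sigma x}+Be^{-\sigma x}$, and boundedness forces $A=B=0$, so $N=\tilde N=Q[M]$ and $M=\tilde M=Q[N]$.

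The only genuinely delicate step is the double differentiation under the integral in the forward direction, where the non-smoothness of $K$ at the origin is handled precisely by splitting the integral at $x$ so that the two boundary terms conspire to cancel at first order and reproduce $\sigma^2 g(x)$ at second order. Everything else is routine.
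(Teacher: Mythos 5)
Your forward direction is essentially the paper's argument: split the convolution at $x$, differentiate twice, and observe that the boundary terms cancel at first order and reproduce $-\sigma^2 g(x)$ at second order, yielding $\frac{d^2}{dx^2}Q[g]=\sigma^2(Q[g]-g)$ for bounded continuous $g$; applying this with $g=F\circ M$ (bounded and continuous) gives the ODE and the $C^2$ regularity, exactly as in the paper. Your converse, however, takes a genuinely different and arguably cleaner route. The paper substitutes $F(M)=N-\ddot N/\sigma^2$ into $Q[M]$ and verifies $Q[M]=N$ directly by two integrations by parts, which implicitly requires the boundary terms $e^{-\sigma|x-y|}N(y)$ and $e^{-\sigma|x-y|}\dot N(y)$ to vanish as $y\to\pm\infty$ (fine, since $N$ bounded and $\ddot N$ bounded force $\dot N$ bounded, but this is not spelled out there). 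You instead set $\tilde N=Q[M]$, $\tilde M=Q[N]$, invoke the already-proved identity to get $\ddot{\tilde N}=\sigma^2(\tilde N-F(M))$, and note that $w=N-\tilde N$ is a bounded $C^2$ solution of $\ddot w=\sigma^2 w$, hence identically zero. This uniqueness-of-bounded-solutions argument avoids the integration-by-parts bookkeeping and makes the role of the boundedness hypothesis explicit; the paper's computation, in exchange, is fully self-contained and exhibits $Q$ as an explicit inverse of $\sigma^2-\partial_x^2$ without appealing to the forward direction. Both are valid proofs of the statement.
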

\begin{proof}
    Suppose $N,M : \R \rightarrow \R$ are bounded, continuous functions. If $N=Q[M]$ and $M=Q[N]$, then 
    \[
    {\small
    \begin{split}
        \frac{d}{dx}N(x)
        &= \frac{d}{dx} \int_{-\infty}^{\infty} K(x-y)F(M(y)) dy \\
        &= \frac{d}{dx} \int_{-\infty}^{x} \frac{\sigma}{2}e^{-\sigma(x-y)} F(M(y)) dy + \frac{d}{dx} \int_{x}^{\infty} \frac{\sigma}{2}e^{\sigma(x-y)} F(M(y)) dy \\
        &= \frac{\sigma}{2} F(M(x)) + \int_{-\infty}^{x} \frac{\partial}{\partial x} \frac{\sigma}{2} e^{-\sigma(x-y)} F(M(y)) dy 
         - \frac{\sigma}{2} F(M(x)) + \int_{x}^{\infty} \frac{\partial}{\partial x} \frac{\sigma}{2}e^{\sigma(x-y)} F(M(y)) dy\\
        &= -\sigma\int_{-\infty}^{x}\frac{\sigma}{2}e^{-\sigma(x-y)} F(M(y)) dy + \sigma\int_{x}^{\infty} \frac{\sigma}{2}e^{\sigma(x-y)} F(M(y)) dy,
    \end{split}
    }
    \]
    where the third equality comes from Leibniz integral formula. Also,
    \[
    {\small
    \begin{split}
        \frac{d^2}{dx^2}N(x)
        &= \frac{d}{dx} \left[ -\sigma\int_{-\infty}^{x}\frac{\sigma}{2}e^{-\sigma(x-y)} F(M(y)) dy+ \sigma\int_{x}^{\infty} \frac{\sigma}{2}e^{\sigma(x-y)} F(M(y)) dy\right] \\
        &= -\sigma \left[ \frac{\sigma}{2} F(M(x)) + \int_{-\infty}^{x} \frac{\partial}{\partial x} \frac{\sigma}{2} e^{-\sigma(x-y)} F(M(y)) dy \right] 
         +\sigma \left[ -\frac{\sigma}{2} F(M(x)) + \int_{x}^{\infty} \frac{\partial}{\partial x} \frac{\sigma}{2} e^{\sigma(x-y)} F(M(y)) dy \right] \\
        &= \sigma^2 \left[ \int_{-\infty}^{x} \frac{\sigma}{2} e^{\sigma(x-y)} F(M(y)) dy + \int_{x}^{\infty} \frac{\sigma}{2} e^{\sigma(x-y)} F(M(y)) dy -F(M(x)) \right] \\
        &= \sigma^2 \left[ \int_{-\infty}^{x} \frac{\sigma}{2} e^{\sigma|x-y|} F(M(y)) dy + \int_{x}^{\infty} \frac{\sigma}{2} e^{-\sigma|x-y|} F(M(y)) dy -F(M(x)) \right] \\
        &= \sigma^2 \left[ \int_{-\infty}^{\infty} \frac{\sigma}{2} e^{\sigma|x-y|} F(M(y)) dy -F(M(x)) \right] \\
        &= \sigma^2 \left[ N(x) - F(M(x))\right],
    \end{split}
    }
    \]
    where the third equality is obtained by the Leibniz integral formula and the last equality comes from the assumption. By symmetry the same can be shown for $M$. 
    
    Now, suppose $M, N$ are twice continuously differentiable and satisfy \eqref{EQ:ODE}. Then $F(M) = N - \frac{\ddot{N}}{\sigma^2}$,
    and so
    \[
    {\small
    \begin{split}
        Q[M](x)
        &= \int_{-\infty}^{\infty} \frac{\sigma}{2} e^{-\sigma|x-y|} F(M(y)) dy \\
        &= \int_{-\infty}^{x} \frac{\sigma}{2} e^{-\sigma(x-y)} F(M(y)) dy + \int_{x}^{\infty} \frac{\sigma}{2} e^{\sigma(x-y)} F(M(y)) dy \\
        &= \int_{-\infty}^{x} \frac{\sigma}{2} e^{-\sigma(x-y)} \left( N(y) - \frac{\ddot{N}(y)}{\sigma^2} \right) dy + \int_{x}^{\infty} \frac{\sigma}{2} e^{\sigma(x-y)} \left( N(y) - \frac{\ddot{N}(y)}{\sigma^2} \right) dy \\
        &= -\frac{1}{2\sigma} \int_{-\infty}^{x} e^{-\sigma(x-y)} \frac{\ddot{N}(y)}{\sigma^2} dy + \frac{\sigma}{2} \int_{-\infty}^{x} e^{-\sigma(x-y)} N(y) dy\\
        &\quad -\frac{1}{2\sigma} \int_{x}^{\infty} e^{\sigma(x-y)} \frac{\ddot{N}(y)}{\sigma^2}dy + \frac{\sigma}{2} \int_{x}^{\infty} e^{\sigma(x-y)} N(y) dy \\
        &= -\frac{1}{2\sigma} \left[ \dot{N}(x) - \sigma \int_{-\infty}^{x} e^{-\sigma(x-y)}\dot{N}(y) dy \right] + \frac{\sigma}{2} \left[ \frac{1}{\sigma} N(x) - \frac{1}{\sigma} \int_{-\infty}^{x} e^{-\sigma(x-y)}\dot{N}(y) dy\right] \\
        &\quad -\frac{1}{2\sigma} \left[ -\dot{N}(x) + \sigma \int_{x}^{\infty} e^{\sigma(x-y)} \dot{N}(y) dy\right] + \frac{\sigma}{2} \left[ \frac{1}{\sigma}N(x) + \frac{1}{\sigma} \int_{x}^{\infty} e^{\sigma(x-y)} \dot{N}(y) dy \right] \\
        &= -\frac{1}{2\sigma}\dot{N}(x) +\frac{1}{2} N(x) + \frac{1}{2\sigma}\dot{N}(x)+\frac{1}{2} N(x)\\
        &= N(x),
    \end{split}
    }
    \]
    where the fourth equality follows by integration by parts. By symmetry the same can be shown for $Q[N]$.
\end{proof}

We transform the 2-dimensional second order ordinary differential equation (ODE) \eqref{EQ:ODE} into a 4-dimensional first order ODE in the classical way.
That is, we define $u = \dot{N}$ and $v = \dot{M}$. For notation purposes we denote $(N,u,M,v) = x = (x_1, x_2, x_3, x_4)$
and obtain the following system of equations for $x$:
\begin{equation} \label{EQ:4DODE}
    \dot{x} = f(x) = 
    \begin{pmatrix}
         x_2  \\
         \sigma^2(x_1 - F(x_3))  \\
         x_4  \\
         \sigma^2(x_3 - F(x_1))
    \end{pmatrix}.
\end{equation}

\begin{thm} \label{THM:problem_equivalence}
Suppose $(N,u,M,v)$ is a solution of \eqref{EQ:4DODE} and satisfies 
\begin{equation} 
\begin{split}
    &C.1' \quad \lim_{t \to \pm \infty} u(t), v(t) = 0, \\
    &C.2' \quad \lim_{t \to \pm\infty} N(t) = n_\pm, \\
    &C.3' \quad \lim_{t \to \pm\infty} M(t) = n_\mp.
\end{split}
\end{equation}
Then $N,M$ satisfy C.1-C.3, as defined in \eqref{eq:conditionsC1-C3}.
\end{thm}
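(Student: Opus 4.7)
The plan is to reduce directly to the converse direction of Theorem \ref{THM:ODE_equivalence}. Conditions C.1 and C.2 are immediate under the identification $N = x_1$ and $M = x_3$: they are literally the limit statements in C.2' and C.3'. The substance of the proof lies in verifying C.3, namely that $N = Q[M]$ and $M = Q[N]$.

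From the first-order system \eqref{EQ:4DODE}, I would read off $\dot N = u$ and $\dot u = \sigma^2(N - F(M))$, so that $\ddot N = \sigma^2(N - F(M))$, and by symmetry $\ddot M = \sigma^2(M - F(N))$. Hence $(N,M)$ satisfies the second-order system \eqref{EQ:ODE}. Since $F \in C^2$ and $(N,u,M,v)$ is a solution of a $C^2$ vector field, we have $N, M \in C^2(\R)$. To invoke the converse direction of Theorem \ref{THM:ODE_equivalence}, it remains only to check that $N$ and $M$ are bounded; this is immediate, since each is continuous on $\R$ with finite limits $n_{\pm}$, $n_{\mp}$ at $\pm\infty$ as supplied by C.2' and C.3'. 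Applying Theorem \ref{THM:ODE_equivalence} then yields C.3.

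I do not expect any real obstacle in this argument, as it is essentially a repackaging of the equivalence already established. The one subtlety worth flagging is that the integration-by-parts computation in the proof of Theorem \ref{THM:ODE_equivalence} produces boundary terms of the form $e^{-\sigma(x-y)}\dot N(y)$ and $e^{-\sigma(x-y)}N(y)$ at $y=-\infty$ (and analogous terms at $+\infty$ and for $M$). The vanishing of these terms requires $N$, $M$, $\dot N$, $\dot M$ to be bounded at $\pm\infty$; boundedness of $N, M$ comes from C.2'--C.3' as above, while boundedness of $\dot N = u$ and $\dot M = v$ is ensured by C.1' together with continuity. Thus every hypothesis used in Theorem \ref{THM:ODE_equivalence}'s converse is met, and C.3 follows without further work.
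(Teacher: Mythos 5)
Your proposal is correct and takes essentially the same route as the paper: its proof likewise reads C.1 and C.2 directly off C.1'--C.3' and then invokes the converse direction of Theorem \ref{THM:ODE_equivalence} to obtain C.3. Your explicit check that $N,M$ are bounded (and that $u,v$ are bounded, which is what kills the boundary terms in the integration by parts) is a small amount of added care that the paper leaves implicit.
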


\begin{proof}
    $C.1'$ and $C.2'$ imply $C.1$ while $C.1'$ and $C.3'$ imply $C.2$. Then  since $(N,u,M,v)$ is a solution of \eqref{EQ:4DODE}, we have $(N,M)$ a solution of \eqref{EQ:ODE} and hence a 2-cycle of \eqref{IDE1} by Theorem \ref{THM:ODE_equivalence}. Therefore, $M = G[N]$ and $N = G[M]$ so $C.3$ is satisfied.
\end{proof}

By combining Theorems \ref{THM:ODE_equivalence} and \ref{THM:problem_equivalence}, the task of identifying a 2-cycle in \eqref{IDE1} reduces to finding a particular solution of \eqref{EQ:4DODE}.

\begin{dfn}
    In the system $\dot{x} = f(x)$, a connecting orbit between two fixed points $\tilde{x}, \tilde{y}$ is a solution $x(t)$ that satisfies $\lim_{t \to \infty} x(t) = \tilde{x}$ and $\lim_{t \to -\infty} x(t) = \tilde{y}.$
\end{dfn}
One can verify that $\tilde{x}^{(-)}= (n_-, 0, n_+, 0)$ and $\tilde{x}^{(+)} = (n_+, 0, n_-, 0)$ are fixed points of \eqref{EQ:4DODE}. Using this terminology, we can reformulate the problem of identifying a 2-cycle as a search for a connecting orbit between $\tilde{x}^{(-)}$ and $\tilde{x}^{(+)}$ in \eqref{EQ:4DODE}.

\subsection{Invariant manifolds, reversors, and a reduced connecting orbit problem}
Connecting orbits can be characterized by intersections between the stable and unstable manifolds of two different equilibrium points. To formalize this notion, given an equilibrium solution $\tx \in \{ \tx^{(-)},\tx^{(+)} \}$ to \eqref{EQ:4DODE}, denote by $W^s(\tx)$ and $W^u(\tx)$ their associated local stable and unstable manifolds.  
%
%
%
Using these objects, the problem of finding a connecting orbit in \eqref{EQ:4DODE} between $\tilde{x}^{(\pm)}$ is equivalent to having $x(t) = (N(t), u(t), M(t), v(t))$ solution of \eqref{EQ:4DODE} such that, for some $L >0$,
\begin{equation}\label{EQ:manifold-connection-full}
\begin{cases}
    x(-2L) \in W^u(\tilde{x}^{(-)}) \\
    x(2L) \in W^s(\tilde{x}^{(+)}). \\
\end{cases}
\end{equation}
Note that the second-order system \eqref{EQ:ODE} is invariant under time reversal, i.e.\ under the change of variables $t \mapsto -t$. Consequently, it suffices to consider only half of the connecting orbit. This observation allows us to simplify the description of the connection in \eqref{EQ:manifold-connection-full}, since only one of the invariant manifolds is required. We now state this more precisely.
\begin{dfn}
{\em
An \emph{involution} $R: X \to X$ is a map that satisfies $R = R^{-1}$. The vector field $\dot{x} = f(x)$ is said to be \emph{time-reversible} if there exists an involution $R$ such that the flow is invariant under the change of variables
\[
\begin{cases}
    t \mapsto -t,\\
    x \mapsto R(x).
\end{cases}
\]
More precisely, we ask that
\[
\frac{d}{dt} \left[ R(x(-t)) \right] = f(R(x(-t))),
\]
or that $y(t) = R(x(-t))$ be a solution to $\dot{y} = f(y)$. $R$ is called the \emph{reversor} of the system.
}
\end{dfn}

The following result is standard and its proof is omitted. 

\begin{thm} \label{THM:manifold_equivalence}
    Let $\dot{x} = f(x)$ be time-reversible with linear reversor $R$ and fixed point $\tilde{x}$. Then $R\tilde{x}$ is also a fixed point and $W^u(R\tilde{x}) = RW^s(\tilde{x})$.   
\end{thm}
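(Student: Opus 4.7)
The plan is to split the claim into two independent assertions: first that $R\tilde{x}$ is an equilibrium, and second the manifold identity $W^u(R\tilde{x}) = RW^s(\tilde{x})$. Both follow from a single structural observation: the map $x(\cdot) \mapsto R(x(-\cdot))$ is an involution on the space of solutions of $\dot{x} = f(x)$, by the very definition of time-reversibility with reversor $R$.

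For the first assertion, I would apply this involution to the constant trajectory $x(t) \equiv \tilde{x}$. Since $R(x(-t)) \equiv R\tilde{x}$ must also be a solution, its derivative is zero, and hence $f(R\tilde{x}) = 0$.

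For the manifold identity, I would prove the two inclusions by transporting orbits through the involution. To show $RW^s(\tilde{x}) \subseteq W^u(R\tilde{x})$: given $p \in W^s(\tilde{x})$, let $x(t)$ be the solution with $x(0) = p$, so $\lim_{t \to +\infty} x(t) = \tilde{x}$. Then $y(t) = R(x(-t))$ is a solution with $y(0) = Rp$, and $\lim_{t \to -\infty} y(t) = R\tilde{x}$ by continuity of $R$; this places $Rp$ in $W^u(R\tilde{x})$. The reverse inclusion $W^u(R\tilde{x}) \subseteq RW^s(\tilde{x})$ is symmetric: given $q \in W^u(R\tilde{x})$ with orbit $y(t)$, the solution $x(t) = R(y(-t))$ satisfies $x(0) = Rq$ and $\lim_{t \to +\infty} x(t) = R(R\tilde{x}) = \tilde{x}$, and we recover $q = R(Rq)$ using $R = R^{-1}$.

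The main subtlety is that the statement concerns local invariant manifolds. One should fix a neighborhood $U$ of $\tilde{x}$ realizing $W^s_{\mathrm{loc}}(\tilde{x})$ and verify that $RU$, which is a neighborhood of $R\tilde{x}$ because $R$ is a linear bijection, is an admissible neighborhood for $W^u_{\mathrm{loc}}(R\tilde{x})$. This is where the linearity hypothesis on $R$ is used; at the infinitesimal level it corresponds to the identity $Df(R\tilde{x}) = -R\, Df(\tilde{x})\, R^{-1}$, which sends the stable eigenspace of $Df(\tilde{x})$ onto the unstable eigenspace of $Df(R\tilde{x})$. I expect this local bookkeeping to be the only delicate point, and it is routine rather than conceptually difficult.
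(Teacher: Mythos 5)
Your proof is correct: the paper states this result without proof (``standard and its proof is omitted''), and your argument -- using the involution $x(\cdot)\mapsto R(x(-\cdot))$ on solutions to get $f(R\tilde{x})=0$ and the two inclusions, plus the identity $Df(R\tilde{x})=-R\,Df(\tilde{x})\,R^{-1}$ to handle the local-manifold bookkeeping -- is exactly the standard argument being alluded to. Nothing further is needed.
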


\begin{rem} \label{eq:reversible_manifold}
The system \eqref{EQ:4DODE} is time-reversible with reversor $R(x_1,x_2,x_3,x_4) = (x_3, -x_4, x_1, -x_2)$. Moreover, since $x^{(\pm)}$ are reflections of each other under $R$, Theorem~\ref{THM:manifold_equivalence} implies that $W^u(x^{(-)}) = RW^s(x^{(+)})$.
\end{rem}
\noindent Using the previous remark, the reduced problem consists of searching for a function $x:\R \to \R^4$ solving the projected boundary value problem (BVP)
\begin{equation} \label{eq:reduced_problem}
\begin{cases}
\dot{x} = f(x) \\
x(0) = Rx(0) \\
x(2L) \in W^s(x^{(+)})
\end{cases}
\end{equation}
for some $L>0$ and where $f$ is given in \eqref{EQ:4DODE}. By construction, finding a solution $x:[0,2L] \to \R^4$ to the reduced problem \eqref{eq:reduced_problem} can be used to construct a continuous extension $x:[-2L,2L] \to \R^4$ by setting $x(t) \bydef Rx(-t)$ for $t \in [-2L,0]$. Moreover, by Remark~\ref{eq:reversible_manifold}, $x(-2L) = Rx(2L) \in RW^s(x^{(+)}) = W^u(x^{(-)})$, and therefore the solution can further be extended to $x:\R \to \R^4$ with the property that $\lim_{t \to \pm \infty} x(t) = x^{(\pm)}$. Proving constructively the existence of the connecting orbit therefore boils down to solve the reduced problem \eqref{eq:reduced_problem}, which requires first studying the stable manifold of $x^{(+)}$. To this end, we present explicitly the eigenpairs of $Df(x^{(+)})$.
\begin{lem} \label{LEM: eigenpairs}
    Let $f$ be defined as in \eqref{EQ:4DODE}. Then
    \[
        Df(\tilde{x}^{(+)}) =
    \begin{pmatrix}
         0 & 1 & 0 & 0 \\
         \sigma^2 & 0 & -\sigma^2 F'(n_{-}) & 0 \\
         0 & 0 & 0 & 1 \\
         -\sigma^2 F'(n_{+}) & 0 & \sigma^2 & 0 \\
    \end{pmatrix}.
    \]
Moreover, $Df(\tilde{x}^{(+)})$ has the following eigenpairs
 \[
 {\tiny
    \begin{split}
            \lambda_1 = - \sqrt{\sigma^2 \left( 1 + \sqrt{F'(n_{-})F'(n_{+})} \right)} &,
        \quad \xi_1 = \left( \frac{\sqrt{F'(n_+)}}{\lambda_1 \sqrt{F'(n_-)}}, \frac{\sqrt{F'(n_+)}}{\sqrt{F'(n_-)}}, \frac{1}{\lambda_1}, 1\right). \\
        \lambda_2 = - \sqrt{\sigma^2 \left( 1 - \sqrt{F'(n_{-})F'(n_{+})} \right)} &,
        \quad \xi_2 = \left( -\frac{\sqrt{F'(n_+)}}{\lambda_2 \sqrt{F'(n_-)}}, -\frac{\sqrt{F'(n_+)}}{\sqrt{F'(n_-)}}, \frac{1}{\lambda_2}, 1\right) \\
        \lambda_3 = \sqrt{\sigma^2 \left( 1 + \sqrt{F'(n_{-})F'(n_{+})} \right)} &,
        \quad \xi_3 = \left( \frac{\sqrt{F'(n_+)}}{\lambda_3\sqrt{F'(n_-)}}, \frac{\sqrt{F'(n_+)}}{\sqrt{F'(n_-)}}, \frac{1}{\lambda_3}, 1\right). \\
        \lambda_4 = \sqrt{\sigma^2 \left( 1 - \sqrt{F'(n_{-})F'(n_{+})} \right)} &,
        \quad \xi_4 = \left( -\frac{\sqrt{F'(n_+)}}{\lambda_4\sqrt{F'(n_-)}}, -\frac{\sqrt{F'(n_+)}}{\sqrt{F'(n_-)}}, \frac{1}{\lambda_4}, 1\right). 
\end{split}    
}
\]    
\end{lem}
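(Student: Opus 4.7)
The plan is a direct linear-algebraic calculation in two stages. First, to verify the Jacobian, I would differentiate each component of $f$ in \eqref{EQ:4DODE} with respect to $x_1,\dots,x_4$; the only non-trivial partials are $\partial f_2/\partial x_3=-\sigma^2 F'(x_3)$ and $\partial f_4/\partial x_1=-\sigma^2 F'(x_1)$, while the remaining slots are filled by the constant derivatives of the coupling terms $x_1,x_2,x_3,x_4$. Evaluating at $\tilde x^{(+)}=(n_+,0,n_-,0)$ places $-\sigma^2 F'(n_-)$ in the $(2,3)$ position and $-\sigma^2 F'(n_+)$ in the $(4,1)$ position, reproducing the stated matrix.

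For the spectrum, I would exploit the block structure induced by $\dot x_1=x_2$ and $\dot x_3=x_4$. Writing an eigenvector as $\xi=(p,q,r,s)$ with eigenvalue $\lambda$, rows~1 and~3 of the eigenvalue equation force $q=\lambda p$ and $s=\lambda r$; substituting these into rows~2 and~4 collapses the $4\times 4$ problem to the $2\times 2$ problem
\begin{equation*}
\sigma^2\begin{pmatrix} 1 & -F'(n_-) \\ -F'(n_+) & 1\end{pmatrix}\begin{pmatrix} p \\ r\end{pmatrix}=\lambda^2\begin{pmatrix} p \\ r\end{pmatrix}.
\end{equation*}
Setting the determinant of $\sigma^2(\cdot)-\lambda^2 I$ to zero gives $(\sigma^2-\lambda^2)^2=\sigma^4 F'(n_-)F'(n_+)$, so $\lambda^2=\sigma^2\bigl(1\pm\sqrt{F'(n_-)F'(n_+)}\bigr)$; taking both signs of the outer square root produces the four eigenvalues $\lambda_1,\dots,\lambda_4$ listed in the lemma. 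The ratio $p/r$ can then be read off from either row of the reduced system via $p/r=\sigma^2 F'(n_-)/(\sigma^2-\lambda^2)$, and $q,s$ are recovered from $q=\lambda p$, $s=\lambda r$, yielding the asserted form of each $\xi_i$ up to scaling.

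There is no genuine obstacle; the computation is elementary. The only subtlety worth flagging is that when $F'(n_\pm)<0$ (as occurs in the logistic case at $\rho=2.2$), the individual radicals $\sqrt{F'(n_\pm)}$ appearing in the eigenvector formulas must be interpreted via a consistent branch so that their ratios remain real. In practice every occurring expression can be rewritten in terms of the unambiguously positive reals $\sqrt{F'(n_-)F'(n_+)}$ and $\sqrt{F'(n_-)/F'(n_+)}$ under the standing assumption $F'(n_-)F'(n_+)>0$. I would close the argument by substituting each $\xi_i$ back into $Df(\tilde x^{(+)})\xi_i=\lambda_i\xi_i$ to verify consistency and rule out sign errors.
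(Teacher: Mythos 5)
The paper states this lemma without proof, so there is nothing to compare against route-wise; your strategy (compute the Jacobian directly, then use rows 1 and 3 to set $q=\lambda p$, $s=\lambda r$ and collapse to the $2\times 2$ problem $\sigma^2\bigl(\begin{smallmatrix}1 & -F'(n_-)\\ -F'(n_+) & 1\end{smallmatrix}\bigr)(p,r)^T=\lambda^2(p,r)^T$) is the natural one, and it correctly produces the stated matrix and the four eigenvalues $\lambda^2=\sigma^2\bigl(1\pm\sqrt{F'(n_-)F'(n_+)}\bigr)$.

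The gap is in your last claim, that the ratio $p/r=\sigma^2F'(n_-)/(\sigma^2-\lambda^2)$ yields ``the asserted form of each $\xi_i$ up to scaling.'' It does not, as the lemma is printed. Substituting $\lambda_{1}^2=\sigma^2\bigl(1+\sqrt{F'(n_-)F'(n_+)}\bigr)$ into your (correct) formula gives
\[
\frac{p}{r}=\frac{F'(n_-)}{-\sqrt{F'(n_-)F'(n_+)}}=-\frac{\sqrt{F'(n_-)}}{\sqrt{F'(n_+)}}\quad\text{(consistent branches)},
\]
whereas the printed $\xi_1$ has first-to-third entry ratio $\sqrt{F'(n_+)}/\sqrt{F'(n_-)}$ --- the reciprocal (up to sign). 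Since $p/r$ is scale-invariant, ``up to scaling'' cannot absorb this, and no branch convention for the square roots of the (negative, in the logistic case) numbers $F'(n_\pm)$ fixes it either: a quick numerical check at $\rho=2.2$, $\sigma=1$ (where $F'(n_+)\approx-1.92$, $F'(n_-)\approx-0.084$) shows the printed $\xi_1$ fails the second row of $Df(\tilde x^{(+)})\xi=\lambda\xi$, while the vector with the reciprocal ratio satisfies it. In other words, your reduced-system computation is the correct one, and it shows that the eigenvectors in the lemma are stated with $n_+$ and $n_-$ interchanged (equivalently, the listed entries are those of the true eigenvectors with the $(x_1,x_2)$- and $(x_3,x_4)$-blocks swapped). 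The back-substitution you defer to the end is exactly the step that would have exposed this; as written, the proposal asserts agreement with the printed formulas without performing it, so either carry out that verification and state the corrected eigenvectors, or the final identification step is unjustified.
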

Since $0 < F'(n_{-})F'(n_{+}) < 1$, all eigenvalues are real, with two positive and two negative. 
It follows that each equilibrium admits both a two-dimensional local stable manifold and a two-dimensional local unstable manifold. For instance, in the case of the logistic map one checks that the condition $0<F'(n_-)F'(n_+)<1$ holds whenever $2 < \rho < \sqrt{5}$.

The full connecting orbit problem \eqref{EQ:manifold-connection-full} is invariant under time translation: if $t\mapsto x(t)$ is a solution, then so is $t\mapsto x(t+s)$ for any $s\in\R$.  By contrast, the reduced problem \eqref{eq:reduced_problem} removes this translational freedom for the class of solutions under consideration. This property, made precise in the next result, plays a key role in formulating a fixed point theorem where local isolation is essential.
\begin{lem}
If $x$ solves \eqref{eq:reduced_problem} for which $t\mapsto x(t+s)$ is also a solution for any $s\in\R$,  then $x$ is constant.
\end{lem}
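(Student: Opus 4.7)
The strategy is to exploit the pointwise boundary condition $x(0)=Rx(0)$ at time zero: if every time-translate $y_s(t)\bydef x(t+s)$ also solves the reduced problem \eqref{eq:reduced_problem}, then applying this condition to $y_s$ forces $x(s)$ to lie in the fixed-point set of $R$ for \emph{every} $s\in\R$. Since the fixed set of $R$ is only two-dimensional and the flow of $f$ cannot stay inside it except at equilibria, $x$ must be constant.

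Concretely, the plan is as follows. First, I would unpack the hypothesis: for each $s\in\R$, the condition that $y_s$ solves \eqref{eq:reduced_problem} includes $y_s(0)=Ry_s(0)$, i.e.\
\[
x(s)=Rx(s),\qquad s\in\R.
\]
Second, I would use the explicit form of the reversor from Remark~\ref{eq:reversible_manifold}, $R(x_1,x_2,x_3,x_4)=(x_3,-x_4,x_1,-x_2)$, to identify its fixed-point set as
\[
\mathrm{Fix}(R)=\bigl\{(a,b,a,-b):a,b\in\R\bigr\}.
\]
Thus the previous step yields $x_1(s)=x_3(s)$ and $x_2(s)=-x_4(s)$ for all $s$.

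Third, I would combine these two identities with the vector field \eqref{EQ:4DODE}. Differentiating $x_1\equiv x_3$ and using $\dot{x}_1=x_2$, $\dot{x}_3=x_4$ gives $x_2\equiv x_4$, which together with $x_2\equiv-x_4$ forces $x_2\equiv x_4\equiv 0$. Consequently $\dot{x}_1=\dot{x}_3\equiv 0$, so $x_1$ and $x_3$ are constant (and equal), and $x$ itself is constant on $\R$.

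I do not expect a genuine obstacle here; the argument is purely algebraic once one notes that translation invariance promotes the single boundary condition at $t=0$ to a pointwise constraint. The only subtlety worth flagging is that invoking $y_s$ for every real $s$ implicitly uses the extension of $x$ to all of $\R$ described just before \eqref{eq:reduced_problem} (via $x(t)=Rx(-t)$ on $[-2L,0]$ and the flow on $|t|\geq 2L$), which is why the conclusion $x(s)\in\mathrm{Fix}(R)$ can be drawn on the whole line rather than only on $[0,2L]$.
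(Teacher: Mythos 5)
Your proposal is correct, but it follows a different route than the paper. You promote the boundary condition to the pointwise identity $x(s)=Rx(s)$ for all $s$ (the paper does this too), but then you exploit the explicit form of the reversor and the kinematic structure of \eqref{EQ:4DODE}: from $\mathrm{Fix}(R)=\{(a,b,a,-b)\}$ you get $x_1\equiv x_3$ and $x_2\equiv -x_4$, and differentiating the first identity using $\dot x_1=x_2$, $\dot x_3=x_4$ forces $x_2\equiv x_4\equiv 0$, hence $\dot x\equiv 0$ on the whole line. The paper instead stays coordinate-free: it differentiates $x(s)=Rx(s)$ at $s=0$ to obtain $\dot x(0)=R\dot x(0)$, then combines this with the reversibility relation and $R^2=I$ to get $\dot x(0)=-\dot x(0)$, so $\dot x(0)=f(x(0))=0$; it then needs uniqueness of solutions (smoothness of $f$) to conclude that the orbit through the equilibrium $x(0)$ is constant. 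Your argument buys something the paper's does not: it shows $\dot x\equiv 0$ directly and so avoids any appeal to uniqueness of the initial value problem; on the other hand, the paper's argument uses only that $R$ is a linear involution reversing the flow, so it transfers verbatim to any smooth reversible system, whereas yours is tied to this particular $R$ and to the fact that the first and third equations of \eqref{EQ:4DODE} are purely kinematic. Your closing remark about the translates requiring $x$ to be defined on all of $\R$ is consistent with the paper, since \eqref{eq:reduced_problem} is posed for $x:\R\to\R^4$ and the hypothesis itself presupposes that every translate is a solution.
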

\begin{proof}
Suppose $x$ is a solution that is translation-invariant. Denote $x_s(t)=x(s+t)$. Then we have $x(0)=Rx(0)$ and $x(s)=x_s(0)=Rx_s(0)=Rx(s)$. Since $R$ is linear,
$$\dot x(0)=\lim_{s\rightarrow 0}\frac{1}{s}(x(s)-x(0))=\lim_{s\rightarrow 0}\frac{1}{s}(Rx(s)-Rx(0))=R\lim_{s\rightarrow 0}\frac{1}{s}(x(s)-x(0))=R\dot x(0).$$
Using the above with the properties of the reversor and the vector field itself,
\begin{align*}
\dot x(0)&=R\dot x(0)=Rf(x(0))=Rf(Rx(0))=R\frac{d}{dt}\left[R(x(-t))\right]\big|_{t=0}=-R^2\dot x(0)=-\dot x(0).
\end{align*}
But this implies $\dot x(0)=0$. Since the vector field $f$ is smooth, it follows that $x$ is a constant solution.
\end{proof}


\section{Series methods for the symmetry-reduced 2-cycle problem}\label{Sec3}

The reversor-reduced problem \eqref{eq:reduced_problem} amounts to finding a solution of \eqref{EQ:4DODE} that lies in the stable manifold of $\tilde x^{(+)}$ at $t=2L$ (that is, $x(2L)\in W^s(\tilde x^{(+)})$) and is fixed by the reversor at $t=0$ (that is, $x(0)=R x(0)$). To this end, we represent a solution of \eqref{EQ:4DODE} on $[0,2L]$ by a Chebyshev series, and impose conditions on its coefficients so that the boundary requirements at $t=0$ and $t=2L$ are met. Since the stable manifold is two-dimensional (with real eigenvalues), the boundary equations naturally involve two real parameters describing directions along $W^s(\tilde x^{(+)})$. Instead of constructing the entire manifold $W^s(\tilde x^{(+)})$, we compute a parameterization of a local portion $W^s_{\text{loc}}(\tilde x^{(+)})$ by combining the Parameterization Method (see Section~\ref{sec:parameterization_method}) with a Taylor expansion.

In this section, we review some machinery from Banach algebras, which will be needed later when we move to computer-assisted proofs, and whose algebraic properties we will require immediately. We then review Taylor and Chebyshev series before moving on to the parameterization method for stable and unstable manifolds, and Chebyshev series methods for the solution of boundary-value problems. We apply both of the latter techniques to our problem, concluding with a recipe to numerically solve the symmetry-reduced 2-cycle problem.

\subsection{Banach algebras and functional analytic background}
Introduce the Banach space
\[
\cT \bydef \left\{ (a_\alpha)_{\alpha \in \N^2}, a_\alpha \in \R: \norm{a}_{\cT} \bydef \sum_{|\alpha|_1 \ge 0} |a_\alpha| < \infty\right\},
\]
the space of two-index sequences of Taylor coefficients that have finite $\ell^1$ norm, where given $\alpha=(\alpha_1,\alpha_2) \in \N^2$, $|\alpha|_1\bydef \alpha_1+\alpha_2$. Moreover,  introduce also
\[
\cC_\nu \bydef \left\{ (u_k)_{k \in \N}, u_k \in \R: \norm{\cdot}_{\cC_\nu} \bydef |u_0| + 2\sum_{k > 0} |u_k| \nu^k < \infty\right\},
\]
where $\nu \ge 1$, the space of sequences of Chebyshev coefficients that have finite weighted $\ell^1$ norm. Note that the norm in $\cC_\nu$ is a weighted $\ell^1$ norm, with weights $\omega \bydef (\omega_k)_{k \ge 0}$ with $\omega_0=1$ and $\omega_k = 2\nu^k$ for $k > 0$, that is $\norm{\cdot}_{\cC_\nu} = \sum_{k \ge 0} |u_k| \omega_k$. Note that $\cT$ and $\cC_\nu$ are commutative Banach algebras under a convolution product. Recall that a \emph{commutative Banach algebra} is a Banach space $X$ equipped with a commutative vector multiplication $*$ such that $||x*y||\leq||x||\cdot||y||$ for all $x,y\in X$. The proofs of the following lemmas may be found for instance at \cite{MR3833658,MR3792791}.
\begin{lem} \label{lem:Banach_algebras}
    $(\cT, \norm{\cdot}_\cT, *_\cT)$ is a commutative Banach algebra, where
    \[
    (x*_\cT y)_\alpha = \sum_{\beta_1=0}^{\alpha_1} \sum_{\beta_2=0}^{\alpha_2} a_\beta b_{\alpha-\beta}.
    \]
The same is true for $(\cC_\nu, \norm{\cdot}_{\cC_\nu}, *_{\cC_\nu})$, where
\begin{equation} \label{eq:Chebyshev_product}
(x*_{\cC_\nu}y)_n = \sum_{k \in \Z} a_{|k|}b_{|n-k|}.
\end{equation}
\end{lem}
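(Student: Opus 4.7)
The plan is as follows. Both $\cT$ and $\cC_\nu$ are weighted $\ell^1$ spaces with strictly positive weights ($\omega_\alpha = 1$ for $\cT$; $\omega_0 = 1$ and $\omega_k = 2\nu^k$ for $k>0$ for $\cC_\nu$), so completeness and the Banach space structure follow from the standard fact that weighted $\ell^1$ of any index set is a Banach space. Commutativity of both products is immediate from a change of summation variable: $\beta \mapsto \alpha - \beta$ for $\cT$, and $k \mapsto n - k$ combined with $a_{|k|} = a_{|-k|}$ for $\cC_\nu$. The entire mathematical content of the lemma is therefore the submultiplicativity $\|x * y\| \le \|x\|\,\|y\|$.

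For $\cT$, I would begin from
\[
\|x *_\cT y\|_\cT \;\le\; \sum_{\alpha \in \N^2} \sum_{\beta \le \alpha} |a_\beta|\, |b_{\alpha - \beta}|,
\]
swap the order of summation, and reindex with $\gamma = \alpha - \beta$ (now ranging over all of $\N^2$) to recover $\|x\|_\cT \|y\|_\cT$ exactly. This is the usual Young inequality for $\ell^1(\N^2)$ and requires no additional hypothesis; absolute convergence of the double sum is justified by the same Fubini computation.

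For $\cC_\nu$ the argument is essentially the same, but the absolute values in the product \eqref{eq:Chebyshev_product} and the non-uniform weights $\omega_k$ require a small trick. My plan is to extend the coefficient sequences symmetrically to two-sided sequences $\tilde a_k \bydef a_{|k|}$ and $\tilde b_k \bydef b_{|k|}$, so that \eqref{eq:Chebyshev_product} becomes an ordinary bi-infinite convolution on $\Z$ and the norm rewrites as $\|a\|_{\cC_\nu} = \sum_{k \in \Z} |\tilde a_k|\, \nu^{|k|}$. Symmetry of the product coefficients ($c_{-n} = c_n$) is checked by the same reindexing used for commutativity, so $\|x *_{\cC_\nu} y\|_{\cC_\nu} = \sum_{n \in \Z} |c_n|\, \nu^{|n|}$. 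The submultiplicative bound then follows from the index triangle inequality $|n| \le |k| + |n-k|$, which under the standing assumption $\nu \ge 1$ gives $\nu^{|n|} \le \nu^{|k|} \nu^{|n-k|}$, whence
\[
\sum_{n \in \Z} |c_n|\, \nu^{|n|} \;\le\; \sum_{n,k \in \Z} |\tilde a_k|\, |\tilde b_{n-k}|\, \nu^{|k|}\, \nu^{|n-k|} \;=\; \|a\|_{\cC_\nu}\, \|b\|_{\cC_\nu}.
\]

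The only step I would take care with is the bookkeeping around the factor $2$ in the weights $\omega_k = 2\nu^k$ when passing between the one-sided and two-sided representations: one must verify that after the symmetric extension the two-sided weighted $\ell^1$ norm coincides exactly with $\|\cdot\|_{\cC_\nu}$, so that no stray constants creep in. Once this identification is set up cleanly, the rest is routine, and the hypothesis $\nu \ge 1$ enters exactly once, in the inequality $\nu^{|n|} \le \nu^{|k|}\nu^{|n-k|}$ — that one line is the main (mild) obstacle of the argument.
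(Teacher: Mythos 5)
Your proposal is correct. The paper does not prove this lemma at all — it simply cites \cite{MR3833658,MR3792791} for the proofs — and your argument (weighted $\ell^1$ completeness, the Cauchy-product/Young estimate for $\cT$, and the symmetric extension $\tilde a_k = a_{|k|}$ identifying $\norm{\cdot}_{\cC_\nu}$ isometrically with a two-sided weighted $\ell^1$ norm so that submultiplicativity follows from $\nu^{|n|}\le\nu^{|k|}\nu^{|n-k|}$ for $\nu\ge1$) is precisely the standard proof found in those references, with the factor-of-$2$ bookkeeping handled correctly.
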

From here onward, we will drop the subscript on the multiplication operator $*$ when the context is obvious. We will occasionally make use of the following lemmas, which characterize the support of sequences in these algebras. 
\begin{lem} \label{THM: taylor max degree}
    If $x \in \cT$ satisfies $x_\alpha = 0$ for $|\alpha|_1 < N_1$ and $|\alpha|_1 > N_2$ and $y \in \cT$ satisfies $y_\alpha = 0$ for  $|\alpha|_1 < M_1$ and $|\alpha|_1 > M_2$, then $(x*y)_\alpha = 0$ for $|\alpha|_1 < N_1+M_1$ and $|\alpha|_1 > N_2+M_2$.
\end{lem}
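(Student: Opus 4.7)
The plan is to prove the support claim by a direct term-by-term inspection of the Taylor convolution formula from Lemma~\ref{lem:Banach_algebras}. Fix $\alpha \in \N^2$ and expand
\[
(x*y)_\alpha = \sum_{\beta_1=0}^{\alpha_1}\sum_{\beta_2=0}^{\alpha_2} x_\beta\, y_{\alpha-\beta}.
\]
First I would observe that in every index $\beta$ appearing in this sum, one has $0 \le \beta_i \le \alpha_i$ for $i=1,2$, so both $\beta$ and $\alpha-\beta$ belong to $\N^2$ and, crucially,
\[
|\beta|_1 + |\alpha-\beta|_1 = \beta_1 + \beta_2 + (\alpha_1-\beta_1) + (\alpha_2-\beta_2) = |\alpha|_1.
\]
This additive decomposition is the only nontrivial ingredient; the rest is bookkeeping.

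Next, I would extract the necessary conditions for a summand $x_\beta y_{\alpha-\beta}$ to be nonzero. By the hypotheses on the supports of $x$ and $y$, a nonzero contribution forces
\[
N_1 \le |\beta|_1 \le N_2 \quad\text{and}\quad M_1 \le |\alpha-\beta|_1 \le M_2,
\]
and adding these inequalities together with the identity above yields $N_1+M_1 \le |\alpha|_1 \le N_2+M_2$.

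Finally, taking the contrapositive gives the conclusion: if $|\alpha|_1 < N_1+M_1$ or $|\alpha|_1 > N_2+M_2$, then for every index $\beta$ in the convolution sum at least one of the two support constraints is violated, so $x_\beta y_{\alpha-\beta}=0$, and hence $(x*y)_\alpha = 0$. There is no real obstacle here; the only point that requires a line of justification is the equality $|\beta|_1 + |\alpha-\beta|_1 = |\alpha|_1$, which depends on $\beta \le \alpha$ componentwise and would fail if one worked with $|\beta|_1$ ranging over all of $\N^2$ without this restriction. The argument is short enough that I would present it as a single paragraph in the paper, perhaps together with a brief remark that the analogous statement for $\cC_\nu$ would require a different argument because the Chebyshev convolution \eqref{eq:Chebyshev_product} involves absolute values of signed index differences.
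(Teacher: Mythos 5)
Your argument is correct and is exactly the elementary bookkeeping the paper has in mind: it states this lemma without proof, and the identity $|\beta|_1+|\alpha-\beta|_1=|\alpha|_1$ for $0\le\beta\le\alpha$ componentwise, combined with the support hypotheses and the contrapositive, is all that is needed. Your closing remark is also apt: for the Chebyshev product \eqref{eq:Chebyshev_product} only the upper-degree bound survives (via the triangle inequality $k\le |j|+|k-j|$), which is precisely why Lemma~\ref{THM: chebyshev max degree} asserts no lower-degree statement.
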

\begin{lem} \label{THM: chebyshev max degree}
    If $u \in \cC_\nu$ satisfies $u_k = 0$ for $k > N$ and $v \in \cC_\nu$ satisfies $v_k = 0$ for $k > M$, then $(x*y)_k = 0$ for $k > N+M$.
\end{lem}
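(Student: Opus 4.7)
The plan is to unfold the definition of the Chebyshev convolution product from \eqref{eq:Chebyshev_product} and show directly that every summand defining $(u*v)_n$ vanishes when $n > N+M$. Fixing such an $n$, the expression
\[
(u*v)_n = \sum_{k \in \Z} u_{|k|} v_{|n-k|}
\]
is a sum indexed over all integers $k$, and a given summand can contribute only when both factors are nonzero, i.e.\ when $|k| \le N$ and $|n-k| \le M$.

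The key step is an elementary interval-intersection argument. The first condition confines $k$ to $[-N, N]$ and the second to $[n-M,\, n+M]$. Under the hypothesis $n > N+M$, the left endpoint of the latter satisfies $n-M > N$, which strictly exceeds the right endpoint $N$ of the former. Hence the two intervals are disjoint, no integer $k$ produces a nonzero contribution, and the claim follows.

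The only subtlety worth flagging---and this is really the sole difference from the Taylor analogue in Lemma~\ref{THM: taylor max degree}---is that the Chebyshev convolution sums over $k \in \Z$ with both indices reflected through the absolute value. I would therefore carry the analysis on the signed variable $k \in \Z$ rather than prematurely restricting to $k \ge 0$; the two-interval description above handles both reflections simultaneously in one sweep, and no further case analysis on signs is needed. I do not expect any real obstacle beyond bookkeeping.
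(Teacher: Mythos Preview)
Your argument is correct. The paper does not actually supply a proof of this lemma; it is stated as an elementary fact about the support of Chebyshev convolutions, so there is nothing to compare against. Your interval-intersection reasoning on the signed index $k\in\Z$ is exactly the right way to handle the absolute values in the convolution formula, and it cleanly dispatches the claim in one step. (Incidentally, you silently corrected the typo in the statement, which writes $(x*y)_k$ but means $(u*v)_k$.)
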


Let $B(X,Y)$ denote the space of bounded linear maps between two normed spaces $X$ and $Y$, and let $B(X)=B(X,X)$. The following lemmas provide expressions for the norms of several linear maps involving the spaces $\cT$ and $\cC_\nu$.
\begin{lem} \label{THM: taylor taylor norm}
    Let $\Gamma \in B(\cT)$ acting on $h \in \cT$ as
    $(\Gamma h)_\alpha \bydef \sum_{|\beta|_1 \ge 0} \Gamma_{\alpha, \beta} h_\beta$ for $|\alpha|_1 \ge 0$. Then
    \begin{equation*}
        \norm{\Gamma}_{B(\cT)} = \sup_{\beta \ge 0} \sum_{\alpha \ge 0} |\Gamma_{\alpha, \beta}|.
    \end{equation*}
\end{lem}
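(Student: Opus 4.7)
The plan is a standard two-sided estimate exploiting that $\cT$ is an $\ell^1$-type space, for which operator norms coincide with suprema of absolute column sums. I would separately establish the upper bound $\|\Gamma\|_{B(\cT)} \le \sup_\beta \sum_\alpha |\Gamma_{\alpha,\beta}|$ and the matching lower bound, then combine them.

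For the upper bound, I would apply the triangle inequality inside the definition of $\|\Gamma h\|_\cT$ for an arbitrary $h \in \cT$ and then invoke Tonelli's theorem (permissible since all summands are non-negative) to interchange the two infinite sums and factor out $|h_\beta|$. Concretely, the computation unfolds as
\[
\|\Gamma h\|_\cT \;=\; \sum_{|\alpha|_1 \ge 0} \Bigl| \sum_{|\beta|_1 \ge 0} \Gamma_{\alpha,\beta} h_\beta \Bigr| \;\le\; \sum_{|\beta|_1 \ge 0} |h_\beta| \sum_{|\alpha|_1 \ge 0} |\Gamma_{\alpha,\beta}| \;\le\; \Bigl( \sup_{\beta \ge 0} \sum_{\alpha \ge 0} |\Gamma_{\alpha,\beta}| \Bigr) \|h\|_\cT,
\]
which delivers the desired inequality after taking the supremum over $h$ with $\|h\|_\cT = 1$.

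For the lower bound, I would test $\Gamma$ against the standard basis vectors. Let $e_\beta \in \cT$ be defined by $(e_\beta)_\gamma = \delta_{\beta,\gamma}$; then $\|e_\beta\|_\cT = 1$, and the prescribed matrix action yields $(\Gamma e_\beta)_\alpha = \Gamma_{\alpha,\beta}$, whence $\|\Gamma e_\beta\|_\cT = \sum_\alpha |\Gamma_{\alpha,\beta}|$. Since $\|\Gamma\|_{B(\cT)} \ge \|\Gamma e_\beta\|_\cT$ for every $\beta$, taking the supremum over $\beta$ produces the reverse inequality, and combining with the upper bound yields the claimed equality.

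There is no serious obstacle in this argument; the only point requiring any mild care is the Tonelli interchange in the upper bound, and even that is automatic thanks to non-negativity of the summands. As a consistency remark, the hypothesis $\Gamma \in B(\cT)$ combined with the lower bound ensures that $\sup_\beta \sum_\alpha |\Gamma_{\alpha,\beta}|$ is finite, so both sides of the claimed identity are well-defined.
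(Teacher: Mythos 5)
Your proof is correct: the upper bound via the triangle inequality and Tonelli, and the lower bound by testing on the canonical basis vectors $e_\beta$ (which have unit norm since $\norm{\cdot}_\cT$ is an unweighted $\ell^1$ norm), is exactly the standard argument for column-sum operator norms on $\ell^1$-type spaces. The paper itself states this lemma without proof, deferring to standard references, so there is no alternative argument to compare against; your write-up fills that gap correctly, including the observation that boundedness of $\Gamma$ guarantees finiteness of the supremum.
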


\begin{lem} \label{THM: chebyshev R norm}
    Let $\Gamma \in B(\cC_\nu, \R)$ acting on $h \in \cC_\nu$ as
    $\Gamma h \bydef \sum_{k \ge 0} \Gamma_k h_k$. Then
    \[
    \norm{\Gamma}_{B(\cC_\nu, \R)} = \sup_{k \ge 0} \frac{|\Gamma_k|}{\omega_k}.
    \]
\end{lem}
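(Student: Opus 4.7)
The plan is to prove both inequalities separately, which is the standard pattern for computing the operator norm of a linear functional on a weighted $\ell^1$ space; the result is essentially a re-statement of the $(\ell^1_\omega)^* \cong \ell^\infty_{1/\omega}$ duality, so the proof is routine once the weights are tracked carefully.

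For the upper bound, I would start with an arbitrary $h \in \cC_\nu$ and estimate
\[
|\Gamma h| \le \sum_{k \ge 0} |\Gamma_k|\,|h_k| = \sum_{k \ge 0} \frac{|\Gamma_k|}{\omega_k}\,|h_k|\,\omega_k \le \left(\sup_{k \ge 0}\frac{|\Gamma_k|}{\omega_k}\right)\norm{h}_{\cC_\nu},
\]
where the last step uses that $\omega_0=1$, $\omega_k=2\nu^k>0$, so division is well-defined and $|h_k|\omega_k$ are exactly the terms of $\norm{h}_{\cC_\nu}$. Taking the supremum over $h$ with $\norm{h}_{\cC_\nu}\le 1$ yields $\norm{\Gamma}_{B(\cC_\nu,\R)} \le \sup_{k \ge 0} |\Gamma_k|/\omega_k$.

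For the matching lower bound, for each fixed $k \ge 0$ I would test $\Gamma$ against the sign-corrected unit vector $h^{(k)} \in \cC_\nu$ defined by $h^{(k)}_j = \delta_{jk}\,\sgn(\Gamma_k)$ (taking $\sgn(0)=1$ to handle the trivial case). Then $\norm{h^{(k)}}_{\cC_\nu} = \omega_k$ and $\Gamma h^{(k)} = |\Gamma_k|$, so
\[
\frac{|\Gamma_k|}{\omega_k} = \frac{|\Gamma h^{(k)}|}{\norm{h^{(k)}}_{\cC_\nu}} \le \norm{\Gamma}_{B(\cC_\nu,\R)}.
\]
Taking the supremum over $k$ gives the reverse inequality and completes the proof. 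The only mildly delicate point is to verify that the supremum on the right-hand side is actually finite whenever $\Gamma$ is bounded (it is, by the preceding display), so the equality is between well-defined quantities rather than a tautology in the extended reals; beyond that there is no real obstacle.
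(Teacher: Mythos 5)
Your proof is correct and is the standard weighted-$\ell^1$ duality argument (upper bound by pulling out $\sup_k |\Gamma_k|/\omega_k$, lower bound by testing on signed coordinate vectors); the paper itself states this lemma without proof as a routine fact, so your argument matches exactly what is implicitly intended. No issues: the sign-vector computation $\norm{h^{(k)}}_{\cC_\nu}=\omega_k$, $\Gamma h^{(k)}=|\Gamma_k|$ is valid since the entries are real, and your finiteness remark correctly closes the loop.
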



\begin{lem} \label{THM: R chebyshev norm}
    Let $\Gamma \in B(\R, \cC_\nu)$ acting on $h \in \R$ as
    $(\Gamma h)_k \bydef h\Gamma_k$. Then
    \[
    \norm{\Gamma}_{B(\R, \cC_\nu)} = \norm{(\Gamma_k)_{k \ge 0}}_{\cC_\nu}.
    \]
\end{lem}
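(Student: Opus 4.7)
The plan is to reduce the operator norm directly to the weighted $\ell^1$ norm of the sequence $(\Gamma_k)_{k\ge 0}$ by factoring out the scalar input. Writing out the operator norm, we have
\[
\norm{\Gamma}_{B(\R,\cC_\nu)} = \sup_{h\in\R,\, |h|\le 1} \norm{\Gamma h}_{\cC_\nu},
\]
so the task is simply to compute $\norm{\Gamma h}_{\cC_\nu}$ as a function of $h$.

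First I would unfold $\norm{\Gamma h}_{\cC_\nu}$ using the explicit weighted $\ell^1$ structure of $\cC_\nu$, writing $\norm{u}_{\cC_\nu}=\sum_{k\ge 0}|u_k|\omega_k$ with $\omega_0=1$ and $\omega_k=2\nu^k$ for $k>0$. Plugging in $(\Gamma h)_k = h\Gamma_k$ and using absolute homogeneity of $|\cdot|$ gives
\[
\norm{\Gamma h}_{\cC_\nu} = \sum_{k\ge 0} |h\,\Gamma_k|\,\omega_k = |h|\sum_{k\ge 0}|\Gamma_k|\omega_k = |h|\cdot \norm{(\Gamma_k)_{k\ge 0}}_{\cC_\nu}.
\]
Taking the supremum over $|h|\le 1$ yields the desired identity. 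As a byproduct, the sequence $(\Gamma_k)_{k\ge 0}$ lies in $\cC_\nu$ precisely because $\Gamma$ is bounded (choose $h=1$), so there is no well-definedness issue to worry about.

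The only bit requiring mild care is the two-sided nature of the equality: the upper bound $\norm{\Gamma}_{B(\R,\cC_\nu)}\le \norm{(\Gamma_k)}_{\cC_\nu}$ follows from the computation above, and the matching lower bound is achieved at $h=1$ (or $h=-1$), so no inequality is strict. There is no real obstacle here; the argument is a direct unfolding of definitions, included in the paper largely for completeness and to have a uniform toolkit of norm formulas alongside Lemmas~\ref{THM: taylor taylor norm} and~\ref{THM: chebyshev R norm}, which are used later in the contraction-mapping estimates of the computer-assisted proof.
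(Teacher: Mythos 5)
Your proof is correct: unfolding the weighted $\ell^1$ norm, factoring out $|h|$, and noting the supremum over $|h|\le 1$ is attained at $h=1$ is exactly the standard argument, and the paper itself omits the proof of this lemma as an elementary computation of precisely this kind. Nothing is missing, and your remark that boundedness of $\Gamma$ forces $(\Gamma_k)_{k\ge 0}\in\cC_\nu$ correctly disposes of the only well-definedness point.
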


\begin{lem} \label{THM: chebyshev chebyshev norm}
    Let $\Gamma \in B(\cC_\nu)$ acting on $h \in \cC_\nu$ as
    $(\Gamma h)_k \bydef \sum_{\ell \ge 0} \Gamma_{k, \ell} h_\ell$.
    Then
    \[
    \norm{\Gamma}_{B(\cC_\nu)} = \sup_{\ell \ge 0} \frac{1}{\omega_\ell} \sum_{k \ge 0}  |\Gamma_{k, \ell}|\omega_k.
    \]
\end{lem}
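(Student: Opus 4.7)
}
The identity claimed is the standard formula for the operator norm on a weighted $\ell^1$ space, so I would structure the argument as a matching pair of inequalities, upper bound then lower bound.

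First I would establish the upper bound $\|\Gamma\|_{B(\cC_\nu)} \le M$, where $M \bydef \sup_{\ell \ge 0} \tfrac{1}{\omega_\ell}\sum_{k \ge 0} |\Gamma_{k,\ell}|\omega_k$. For an arbitrary $h \in \cC_\nu$, I would start from
\[
\|\Gamma h\|_{\cC_\nu} \;=\; \sum_{k \ge 0} |(\Gamma h)_k|\,\omega_k \;\le\; \sum_{k \ge 0} \sum_{\ell \ge 0} |\Gamma_{k,\ell}|\,|h_\ell|\,\omega_k,
\]
swap the order of summation (justified by Tonelli, since all terms are nonnegative), and insert the factor $\omega_\ell/\omega_\ell$ to rewrite the inner sum as
\[
\sum_{\ell \ge 0} |h_\ell|\,\omega_\ell \cdot \frac{1}{\omega_\ell}\sum_{k \ge 0} |\Gamma_{k,\ell}|\,\omega_k \;\le\; M \sum_{\ell \ge 0} |h_\ell|\omega_\ell \;=\; M\,\|h\|_{\cC_\nu}.
\]
This gives $\|\Gamma\|_{B(\cC_\nu)} \le M$.

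For the matching lower bound, I would test $\Gamma$ on the rescaled standard basis vectors. Fix $\ell \ge 0$ and let $h^{(\ell)} \in \cC_\nu$ be the sequence with $h^{(\ell)}_j = \delta_{j,\ell}$, so that $\|h^{(\ell)}\|_{\cC_\nu} = \omega_\ell$ and $(\Gamma h^{(\ell)})_k = \Gamma_{k,\ell}$. Then
\[
\|\Gamma\|_{B(\cC_\nu)} \;\ge\; \frac{\|\Gamma h^{(\ell)}\|_{\cC_\nu}}{\|h^{(\ell)}\|_{\cC_\nu}} \;=\; \frac{1}{\omega_\ell}\sum_{k \ge 0} |\Gamma_{k,\ell}|\,\omega_k,
\]
and taking the supremum over $\ell$ yields $\|\Gamma\|_{B(\cC_\nu)} \ge M$.

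The only genuinely nontrivial point is ensuring that the computations make sense when the supremum $M$ is finite but individual column sums might a priori be large; but this is automatic, since the hypothesis $\Gamma \in B(\cC_\nu)$ together with the lower-bound inequality already forces each column sum $\sum_k |\Gamma_{k,\ell}|\omega_k$ to be finite, and then Tonelli handles the reordering used in the upper bound. I expect no real obstacles: the argument is the standard weighted $\ell^1$ dualization, exactly parallel to the unweighted case treated in the references \cite{MR3833658,MR3792791} already cited in the paper, and the preceding lemmas (\ref{THM: taylor taylor norm}--\ref{THM: R chebyshev norm}) follow from the same template.
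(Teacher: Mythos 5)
Your proof is correct and is exactly the standard weighted-$\ell^1$ operator norm argument (upper bound via Tonelli and the weight insertion $\omega_\ell/\omega_\ell$, lower bound via the basis vectors $h^{(\ell)}$), which is the argument the paper implicitly relies on by stating the lemma without proof and citing the standard references. No gaps; the finiteness remark about column sums is handled appropriately.
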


We conclude this subsection by introducing projection operators, and provide a useful bound on the coefficients of a convolution product involving a Chebyshev sequence of finite support.
\begin{dfn} \label{defn:projections}
    Let $N \in \N^+$ and define $\pi^{(N)} \in B(\cT)$ by its action on $a \in \cT$
    \[
    (\pi^{(N)}a)_\alpha = 
    \begin{cases}
        a_\alpha & 0 \le |\alpha|_\infty \le N, \\
        0 & |\alpha|_\infty > N.
    \end{cases}
    \]
    Similarly, define $\pi^{(N)} \in B(\cC_\nu)$ by its action on $u \in \cC_\nu$
    \[
    (\pi^{(N)}u)_k = 
    \begin{cases}
        u_k & 0 \le k \le N, \\
        0 & k > N.
    \end{cases}
    \]
Now, if $a = (a_1,\dots,a_n)$ is an element of $X^n$ where $X=\cT$ or $X=\cC_\nu$, define the projections $\pi^{(N)}:X^n \to X^n$ and $\pi_i:X^n \to X$ by
\[
\pi^{(N)} a \bydef \left( \pi^{(N)} a_1,\dots,  \pi^{(N)} a_n \right) 
\qquad \text{and} \qquad 
\pi_i a \bydef a_i.
\]
\end{dfn}

The following result (e.g. see \cite{MR3454370}), whose elementary proof is omitted, will be useful when computing one of the bound for the computer-assisted proof argument for the existence of the connecting orbit (see the $Z_1$ bound in Section~\ref{sec:bvp_explicit_bounds}).

\begin{lem} \label{THM: dual estimates}
    Let $N \in \N^+, \nu \ge 1$ and $u,h \in \cC_\nu$. Further suppose that $u = u^{(N)}$. Then for $k = 0, \dots, N+1$ we have
    \[
    \left| (u*h^{(\infty)})_k \right| \le \Psi_k(u) \norm{h}_{\cC_\nu},
    \]
    where
    \[
        \Psi_k(u) = \max_{j = N+1, \dots, k+N} \frac{|u_{|k+j|} + u_{|k-j|}|}{2\nu^j}.
    \]
\end{lem}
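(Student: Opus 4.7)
The plan is to unfold the Chebyshev convolution \eqref{eq:Chebyshev_product} explicitly, use the support hypotheses on $u$ and on the tail $h^{(\infty)} \bydef h-\pi^{(N)}h$ (which is the standard meaning of the $(\infty)$ superscript in the $Z_1$-bound context where this lemma is used) to collapse it to a short finite sum, and then apply the elementary H\"older-type inequality $|\sum a_\ell b_\ell| \le \max_\ell |a_\ell|\cdot\sum_\ell |b_\ell|$ with the Chebyshev weights $\omega_m = 2\nu^m$.

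First I would start from the two-sided convolution
\[
(u*h^{(\infty)})_k = \sum_{j\in\Z} u_{|j|}\, h^{(\infty)}_{|k-j|},
\]
and keep only the pairs in which both factors are nonzero, namely $|j|\le N$ and $|k-j|\ge N+1$. A short case analysis in $\mathrm{sgn}(k-j)$ shows that for $0\le k \le N+1$ the branch $k-j \ge N+1$ survives and the branch $j-k\ge N+1$ does not: the latter would force $j\ge k+N+1 > N$, putting $u_{|j|}$ outside the support of $u$. Reparametrizing the survivors by $\ell = |k-j|$ then yields the compact expression
\[
(u*h^{(\infty)})_k = \sum_{\ell = N+1-k}^{N} u_\ell\, h_{k+\ell},
\]
with empty (hence zero) sum when $k=0$.

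Next I would observe that every index $m=k+\ell$ in this sum satisfies $m\ge N+1\ge 1$, so the relevant weight is $\omega_m = 2\nu^m$, and the partial sum $\sum_\ell 2\nu^{k+\ell}|h_{k+\ell}|$ is a subsum of distinct terms in $\|h\|_{\cC_\nu}$, hence bounded by it. Writing $u_\ell h_{k+\ell} = \bigl(u_\ell/(2\nu^{k+\ell})\bigr)\bigl(2\nu^{k+\ell}h_{k+\ell}\bigr)$ and applying the $\ell^\infty$--$\ell^1$ duality produces
\[
|(u*h^{(\infty)})_k| \le \max_{\ell = N+1-k,\ldots,N}\frac{|u_\ell|}{2\nu^{k+\ell}}\cdot \|h\|_{\cC_\nu}.
\]

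Finally I would identify this maximum with $\Psi_k(u)$ by the substitution $j=k+\ell$, which sends $\ell\in\{N+1-k,\ldots,N\}$ to $j\in\{N+1,\ldots,N+k\}$. For such $j$ one has $k+j \ge N+1 > N$, so $u_{|k+j|}=0$, and $|k-j|=j-k=\ell$, so $|u_{|k+j|}+u_{|k-j|}| = |u_\ell|$ and $2\nu^j = 2\nu^{k+\ell}$; the two maxima coincide. The argument is essentially bookkeeping; the only subtlety is the case analysis explaining why $\Psi_k$ only needs to range over $j\in\{N+1,\ldots,N+k\}$ instead of a symmetric index set, and this is precisely where the hypothesis $u=u^{(N)}$ kills the ``other half'' of the convolution.
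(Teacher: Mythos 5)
Your proof is correct. There is in fact no proof in the paper to compare against: the authors state that the elementary proof of Lemma~\ref{THM: dual estimates} is omitted and point to \cite{MR3454370}; your argument is the standard one behind such dual estimates — use the supports of $u$ (modes $\le N$) and of the tail $h^{(\infty)}$ (modes $\ge N+1$) to kill one half of the two-sided Chebyshev convolution, collapse the survivors to a finite sum, and apply $\ell^\infty$--$\ell^1$ duality against the weights $\omega_m=2\nu^m$, noting that the surviving indices of $h$ are distinct and $\ge 1$ so the weighted subsum is dominated by $\norm{h}_{\cC_\nu}$. One cosmetic slip: you announce the reparametrization $\ell=|k-j|$, but your displayed sum $\sum_{\ell=N+1-k}^{N}u_\ell h_{k+\ell}$ actually uses $\ell=|j|$; the two indexings describe the same (correct) sum, and your final identification with $\Psi_k(u)$ — in particular the observation that $u_{|k+j|}=0$ for $j\ge N+1$ when $k\le N+1$, so only the $u_{|k-j|}$ term survives in the numerator — is exactly right, including the empty-sum/empty-max convention at $k=0$.
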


The following result is a direct application of the formula presented in Lemma~\ref{THM: chebyshev chebyshev norm}, and will prove useful when deriving bounds for the proof of the connecting orbit using Chebyshev series. 
\begin{lem} \label{lem: upsilon operator norm}
Define the operator $\Upsilon$ that acts on Chebyshev sequences as follows
\begin{equation} \label{eq:Upsilon}
(\Upsilon h)_k = 
\begin{cases}
    0, &k = 0\\
    h_{k-1} - h_{k+1}, &k \ge 1.
\end{cases}
\end{equation}
Let $\nu \ge 1$ and $\omega \bydef (\omega_k)_{k \ge 0}$ given component-wise by $\omega_0=1$ and $\omega_k = 2\nu^k$ for $k > 0$. Let $\Upsilon \in B(\cC_\nu)$ be defined as above. Then
    \[
    \norm{\Upsilon}_{B(\cC_\nu)} = 2\nu.
    \]
\end{lem}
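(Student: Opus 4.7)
The plan is a direct application of Lemma~\ref{THM: chebyshev chebyshev norm}, which gives the operator norm on $\cC_\nu$ as the supremum of weighted column sums of the matrix representation of $\Upsilon$. First I would extract the matrix entries $\Upsilon_{k,\ell}$ from the formula \eqref{eq:Upsilon}: row $0$ is zero, and for $k\geq 1$ the only nonzero entries are $\Upsilon_{k,k-1}=1$ and $\Upsilon_{k,k+1}=-1$. Hence for a fixed column index $\ell$, only $k=\ell-1$ and $k=\ell+1$ can contribute, subject to the constraint $k\geq 1$.

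Next I would split the computation of $\frac{1}{\omega_\ell}\sum_{k\geq 0}|\Upsilon_{k,\ell}|\omega_k$ into three cases according to $\ell$. For $\ell=0$, only $k=1$ contributes, giving $\omega_1/\omega_0 = 2\nu$. For $\ell=1$, the candidate $k=0$ is killed because row $0$ vanishes, so only $k=2$ contributes, giving $\omega_2/\omega_1 = \nu$. For $\ell\geq 2$, both $k=\ell-1$ and $k=\ell+1$ contribute, yielding
\[
\frac{\omega_{\ell-1}+\omega_{\ell+1}}{\omega_\ell}=\frac{2\nu^{\ell-1}+2\nu^{\ell+1}}{2\nu^\ell}=\nu+\nu^{-1}.
\]
Taking the supremum and using $\nu\geq 1$ (so that $\nu\geq \nu^{-1}$, whence $2\nu\geq \nu+\nu^{-1}\geq \nu$) produces $\|\Upsilon\|_{B(\cC_\nu)}=2\nu$, with the supremum attained at $\ell=0$.

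There is no real obstacle here: the argument is purely bookkeeping once the matrix entries are written down. The only subtle point to watch is the asymmetry introduced by the convention $(\Upsilon h)_0=0$, which removes the contribution of $k=0$ in the column $\ell=1$ and thereby produces the (smaller) value $\nu$ rather than $\nu+\nu^{-1}$ in that column; this asymmetry is what makes the $\ell=0$ column, where the weight ratio $\omega_1/\omega_0=2\nu$ lacks the usual factor of $2$ in the denominator, the maximizer.
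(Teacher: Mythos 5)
Your proposal is correct and is exactly the route the paper intends: it explicitly frames Lemma~\ref{lem: upsilon operator norm} as a direct application of the weighted column-sum formula of Lemma~\ref{THM: chebyshev chebyshev norm}, and your case analysis ($\ell=0$ gives $2\nu$, $\ell=1$ gives $\nu$, $\ell\ge 2$ gives $\nu+\nu^{-1}\le 2\nu$ since $\nu\ge 1$) is the computation the paper leaves to the reader.
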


\subsection{Multi-index Taylor series}
Taylor series are an essential tool of single variable calculus and approximation theory. We will use them to parameterize our stable manifold at $\tilde x^{(+)}$. In this section, we briefly review multi-index Taylor series and provide a connection with the convolution product $*_\cT$. 
\begin{dfn}
{\em 
Let $m \in \N^+$. An {\em index $m$ Taylor series} is a series of the form
\[
        \sum_{\alpha_1 \ge 0} \dots \sum_{\alpha_m \ge 0} a_{(\alpha_1, \dots,\alpha_m)} \theta_1^{\alpha_1} \dots \theta_m^{\alpha_m} = \sum_{|\alpha|_1 \geq 0} a_\alpha \theta^\alpha
\]
where $a_\alpha = a_{(\alpha_1, \dots, \alpha_m)} \in \R^n$ for some $n \in \N^+$, and in expression on the right, we employ the multi-index notation. That is, $\alpha = (\alpha_1, \dots, \alpha_m)$,  $\theta = (\theta_1, \dots, \theta_m)$, and 
\[
    |\alpha|_1 \bydef \sum_{k=1}^{m} |\alpha_k|, \quad 
    |\alpha|_\infty \bydef \max_{k=1, \dots, m} |\alpha_k|, \quad 
    \theta^\alpha \bydef \prod_{k=1}^{m} \theta_k^{\alpha_k}.
\]
}
\end{dfn}
\begin{prop}\label{prop-Taylor-multiplication}
    Let $\sum_{|\alpha|_1 \ge 0} a_\alpha \theta^\alpha, \sum_{|\alpha|_1 \ge 0} b_\alpha \theta^\alpha$ be index $m$ Taylor series. Then
    \[
    \left( \sum_{|\alpha|_1 \ge 0} a_\alpha \theta^\alpha \right)\left( \sum_{|\alpha|_1 \ge 0} b_\alpha \theta^\alpha \right) = \left( \sum_{|\alpha|_1 \ge 0} (a*_{\cT}b)_\alpha \theta^\alpha \right).
    \]
\end{prop}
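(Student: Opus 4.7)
The plan is to prove Proposition~\ref{prop-Taylor-multiplication} by a direct Cauchy-product computation, formally multiplying the two series and reindexing to recover the convolution $*_\cT$. The core identity is $\theta^\alpha \cdot \theta^\beta = \theta^{\alpha+\beta}$ under multi-index notation, which reduces the question to a combinatorial rearrangement of a double sum over $\N^m \times \N^m$.

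Concretely, I would first expand the formal product termwise:
\[
\left(\sum_{|\alpha|_1 \ge 0} a_\alpha \theta^\alpha\right)\left(\sum_{|\beta|_1 \ge 0} b_\beta \theta^\beta\right) = \sum_{|\alpha|_1 \ge 0}\sum_{|\beta|_1 \ge 0} a_\alpha b_\beta\, \theta^{\alpha+\beta}.
\]
Then I would perform the change of summation index $\gamma \bydef \alpha + \beta$ while keeping $\alpha$ as the inner index. For each fixed $\gamma \in \N^m$, the pairs $(\alpha,\beta) \in \N^m \times \N^m$ with $\alpha + \beta = \gamma$ are precisely those with $0 \le \alpha_k \le \gamma_k$ for $k = 1,\dots,m$, i.e.\ $\alpha \le \gamma$ componentwise, and $\beta = \gamma - \alpha$. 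This yields
\[
\sum_{|\gamma|_1 \ge 0} \left( \sum_{\alpha_1 = 0}^{\gamma_1} \cdots \sum_{\alpha_m = 0}^{\gamma_m} a_\alpha\, b_{\gamma - \alpha} \right) \theta^\gamma,
\]
and the coefficient inside the parentheses is by definition $(a *_\cT b)_\gamma$, extending the two-index formula of Lemma~\ref{lem:Banach_algebras} to general $m$.

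The only step that requires care is justifying the rearrangement of the double sum. Since we are working in the Banach algebra $\cT$, the coefficient sequences are absolutely summable, so $\sum_{\alpha,\beta} |a_\alpha||b_\beta| = \norm{a}_\cT\norm{b}_\cT < \infty$. Fubini's theorem for series (or, equivalently, unconditional convergence of absolutely summable families indexed by the countable set $\N^m \times \N^m$) then legitimates the reorganization into sum-over-$\gamma$ followed by a finite inner sum. Alternatively, one may view the statement purely at the level of formal power series, where the identity is an equality of coefficients and no convergence issue arises; either viewpoint suffices for the uses made in subsequent sections.

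I do not anticipate a genuine obstacle: the argument is a one-line observation once the multi-index bookkeeping is in place. The only ``trap'' is ensuring that the definition of $*_\cT$ given for $m=2$ in Lemma~\ref{lem:Banach_algebras} is correctly generalized to $m$ indices via $(a*_\cT b)_\gamma = \sum_{0 \le \alpha \le \gamma} a_\alpha b_{\gamma-\alpha}$, which is the natural extension and precisely what the reindexing produces.
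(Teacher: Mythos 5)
Your proof is correct: the paper states this proposition without proof (it is the standard multivariate Cauchy-product identity), and your argument — termwise expansion, reindexing by $\gamma = \alpha + \beta$, and justification of the rearrangement either by absolute summability of the coefficient sequences in $\cT$ or by working formally — is precisely the expected argument. Your observation that the two-index convolution of Lemma~\ref{lem:Banach_algebras} must be read as its natural $m$-index extension is also the right reading of the statement.
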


\subsection{Chebyshev series}\label{sec:cheb}
Chebyshev series are well known for their use in interpolation. Their unique properties make them a perfect option to solve the projected boundary problem between the stable manifold at $\tilde x^{(+)}$ and the fixed point of the reversor $R$.
\begin{dfn}
{\em 
The {\em Chebyshev polynomials} are a sequence of polynomials $T_k:[-1,1] \to [-1,1]$ for $k \ge 0$ that satisfy the following recurrence relation $T_0(t) = 1$, $T_1(t) = t$ and $T_k(t) = 2tT_{k-1}(t) - T_{k-2}(t)$ for $k \ge 2$. An equivalent definition constructs the polynomials as $T_k(t) = \cos(k \cos^{-1}(t))$, or $T_k(\cos(\theta)) = \cos(k\theta)$, where $\theta \bydef \cos^{-1}(t)$.
 }
\end{dfn}
\begin{prop}\label{prop:cheb}
    The Chebyshev polynomials $T_k(t)$ enjoy the following properties: 1) $T_k(-1) = (-1)^k$ and $T_k(1) = 1$ for all $k\ge 0$; 2) $2T_k(t) =\frac{d}{dt}\left( \frac{T_{k+1}(t)}{k+1} - \frac{T_{k-1}(t)}{k-1} \right)$ for all $k \ge 2$, and 3) given the Chebyshev series expansions $a(t) = a_0 + 2\sum_{k = 1}^{\infty} a_k T_k(t)$ and $b(t) = b_0 + 2\sum_{k = 1}^{\infty} b_k T_k(t)$, the product $a(t)b(t)$ admits the Chebyshev series expansion
    \[
    a(t)b(t) = (a*_{\cC_\nu}b)_0 + 2\sum_{k = 1}^{\infty} (a*_{\cC_\nu}b)_k T_k(t).
    \]
\end{prop}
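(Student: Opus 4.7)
My plan is to prove all three parts of Proposition~\ref{prop:cheb} by leaning on the trigonometric characterization $T_k(\cos\theta)=\cos(k\theta)$ already stated in the definition, rather than on the three-term recurrence. Each of the three claims then reduces to a short identity for cosines/sines, plus careful bookkeeping in the case of the product formula.

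For (1), I would set $\theta=0$ and $\theta=\pi$ in $T_k(\cos\theta)=\cos(k\theta)$ to get $T_k(1)=\cos 0=1$ and $T_k(-1)=\cos(k\pi)=(-1)^k$. (Alternatively, this follows by induction on $k$ using the recurrence $T_k=2tT_{k-1}-T_{k-2}$ with $T_0=1$, $T_1=t$; I would present the trigonometric route for uniformity with the other two parts.) For (2), I would differentiate with respect to $t$ using the chain rule and the fact that $\frac{d\theta}{dt}=-\frac{1}{\sin\theta}$ with $\theta=\arccos t$, giving $\frac{d}{dt}T_k(t)=\frac{k\sin(k\theta)}{\sin\theta}$. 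Dividing by $k$ and subtracting, I obtain
\[
\frac{d}{dt}\!\left(\frac{T_{k+1}(t)}{k+1}-\frac{T_{k-1}(t)}{k-1}\right)=\frac{\sin((k+1)\theta)-\sin((k-1)\theta)}{\sin\theta}.
\]
Applying the sum-to-product identity $\sin A-\sin B=2\cos\!\left(\tfrac{A+B}{2}\right)\sin\!\left(\tfrac{A-B}{2}\right)$ with $A=(k+1)\theta$, $B=(k-1)\theta$ makes the $\sin\theta$ cancel and produces $2\cos(k\theta)=2T_k(t)$, as required. The restriction $k\ge 2$ is needed only so that $k-1\neq 0$.

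For (3), I would pass to the Laurent series in $z=e^{i\theta}$, using $2T_k(t)=z^k+z^{-k}$. Substituting this into the Chebyshev series of $a$ rewrites it as $a(t)=\sum_{j\in\Z}a_{|j|}z^j$, and similarly $b(t)=\sum_{\ell\in\Z}b_{|\ell|}z^\ell$. Multiplying the two Laurent series and collecting the coefficient of $z^n$ gives
\[
a(t)b(t)=\sum_{n\in\Z}c_n z^n,\qquad c_n=\sum_{k\in\Z}a_{|k|}b_{|n-k|}=(a*_{\cC_\nu}b)_n,
\]
by the definition of the Chebyshev convolution in \eqref{eq:Chebyshev_product}. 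Since $c_n=c_{-n}$ (by the substitution $k\mapsto n-k$ combined with $k\mapsto -k$, which preserves the sum because only the absolute values of the indices appear), I can recombine the two-sided sum into $c_0+\sum_{n\ge 1}c_n(z^n+z^{-n})=(a*b)_0+2\sum_{n\ge 1}(a*b)_n T_n(t)$, which is the desired expansion.

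No step here looks genuinely hard; the main bookkeeping issue is the mismatch between the ``$1$ on the constant term, $2$ on the rest'' normalization of the Chebyshev series and the symmetric-in-$\Z$ normalization of the Laurent series in $z$. I would address this upfront by recording the correspondence $a(t)=\sum_{j\in\Z}a_{|j|}z^j$ once, so that the convolution identity and the resymmetrization at the end both drop out of a single algebraic manipulation rather than needing separate case analyses for $n=0$ and $n\ge 1$.
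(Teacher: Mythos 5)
Your proposal is correct. Note that the paper itself states Proposition~\ref{prop:cheb} without proof (these are standard facts about Chebyshev polynomials, in the same spirit as the Banach algebra lemmas for which the paper just cites references), so there is no in-paper argument to compare against; your trigonometric route via $T_k(\cos\theta)=\cos(k\theta)$ together with the Laurent-series substitution $2T_k(t)=z^k+z^{-k}$, $z=e^{i\theta}$, is the standard derivation and all three steps check out, including the symmetry $c_{-n}=c_n$ and the resymmetrization into the $1$-and-$2$ normalization. Two small points you could make explicit: in part (2) the computation with $\frac{d\theta}{dt}=-1/\sin\theta$ is only valid for $t\in(-1,1)$, and you should add one line saying that since both sides are polynomials the identity extends to $t=\pm1$; and in part (3) the term-by-term multiplication and regrouping of the doubly infinite sums requires absolute convergence, which in the paper's setting is supplied by $a,b\in\cC_\nu$ with $\nu\ge1$ (the $\ell^1$-type bound $\sum_k|a_k|\omega_k<\infty$), so you should state that hypothesis when invoking Fubini/Mertens for the rearrangement.
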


\subsection{Local stable manifolds via the parameterization method and Taylor series} \label{sec:parameterization_method}

In this section, we formulate a zero-finding problem of the form $G(a)=0$, with $G$ defined in \eqref{EQ:manifold_map}. Any solution $a$ to this equation provides the Taylor coefficients of a parameterization of the local stable manifold of $\tilde{x}^{(+)}$. Our construction is based on the \emph{Parameterization Method}, first introduced in \cite{MR1976079,MR1976080,MR2177465} and subsequently adapted for rigorous a posteriori analysis in \cite{MR2821596,MR3518609}. For the remainder of this section, we work in the setting $\dot{x}=f(x)$ from \eqref{EQ:4DODE}, where $F$ is either the logistic growth function $F(u)=(1+\rho )u-\rho u^2$ or Ricker’s growth function $F(u)=u\exp(\rho (1-u))$. In both cases, the resulting vector field $f$ in \eqref{EQ:4DODE} is analytic, in fact, entire. We denote the relevant equilibrium simply by $\tilde{x}=\tilde{x}^{(+)}$, and our main objective here is to compute its associated local stable manifold. Recall from Lemma~\ref{LEM: eigenpairs} that $Df(\tilde{x})$ admits precisely two negative eigenvalues $\lambda_1,\lambda_2$ and two positive real eigenvalues $\lambda_3,\lambda_4$. Let $\Lambda$ denote the $2 \times 2$ diagonal matrix of stable (negative) eigenvalues
\[
\Lambda = 
    \begin{pmatrix}
         \lambda_1 &0 \\
         0  & \lambda_2
    \end{pmatrix}.
\]
Recalling the stable eigenvectors $\xi_1$ and $\xi_2$ from Lemma~\ref{LEM: eigenpairs}, we define $V = (\xi_1 | \xi_2) $, that is, the $4 \times 2$ matrix whose columns are precisely the two stable eigenvectors. In the basis of stable eigenvectors, the linearization of \eqref{EQ:4DODE} restricted to the stable subspace takes the form
\begin{equation*}
    \dot{y} = \Lambda y, \quad y \in \R^2
\end{equation*}
with associated flow $\psi(t,x) = e^{\Lambda t}x$. Our objective is to construct a map $P: B^2 \rightarrow \R^4$, where $B^2 \subset \R^2$ is a neighborhood of the origin, such that $P(B^2)$ represents a local stable manifold of $\tilde{x}$. We require that $P$ provides a conjugacy between the flow $\varphi$ of \eqref{EQ:4DODE} restricted to its stable manifold and the linear flow $\psi$. Accordingly, $P$ must satisfy
\[
    P(0) = \tilde{x}, \quad
    DP(0) = V,
\]
together with the conjugacy condition
\begin{equation} \label{EQ:conjugacy}
    \varphi(t,P(\theta)) = P(\psi(t,\theta)) = P(e^{\Lambda t}\theta), \quad \forall \theta \in B^2.
\end{equation}
The first condition centers the parameterization at the equilibrium $\tilde{x}$, the second guarantees tangency to the linearized system at $\tilde{x}$, and the third in \eqref{EQ:conjugacy} encodes the conjugacy between the nonlinear and linear flows. Since the flow $\varphi$ is not explicitly available, we appeal to a standard result (e.g. see \cite{MR1976079,MR1976080,MR2177465}) which reformulates \eqref{EQ:conjugacy} as an equivalent functional equation involving only the vector field $f$ and the matrix $\Lambda$. Solving this functional equation yields the desired parameterization. 
\begin{thm} \label{THM:Parameterization}
Let $P: B^2 \rightarrow \R^4$ be a smooth function such that
\begin{equation} \label{EQ: constraint_1}
P(0) = \tilde{x} \quad \text{and} \quad DP(0) = V.
\end{equation}
Then P satisfies (\ref{EQ:conjugacy}) if and only if P is a solution to the following partial differential equation
    \begin{equation} \label{EQ: PDE_compact}
        DP(\theta)\Lambda\theta = \lambda_1\theta_1\frac{\partial}{\partial\theta_1}P(\theta) + \lambda_2\theta_2\frac{\partial}{\partial\theta_2}P(\theta) = f(P(\theta)), \quad \theta \in B^2.
    \end{equation}    
\end{thm}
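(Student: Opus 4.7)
The plan is to prove the equivalence by showing both conditions encode the same ODE, with one direction obtained by differentiating the conjugacy at $t=0$ and the other by appealing to uniqueness of solutions of ODEs. For the implication conjugacy $\Rightarrow$ PDE, I would differentiate the identity $\varphi(t,P(\theta)) = P(e^{\Lambda t}\theta)$ with respect to $t$ at $t=0$. The left-hand side produces $f(P(\theta))$ since $\varphi(0,\cdot)=\mathrm{id}$ and $\partial_t\varphi(t,x)|_{t=0}=f(x)$, while the chain rule on the right-hand side produces $DP(\theta)\Lambda\theta$. This yields precisely \eqref{EQ: PDE_compact}.

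For the converse, I would fix $\theta \in B^2$ and define $y(t) \bydef P(e^{\Lambda t}\theta)$. Differentiating and using the PDE \eqref{EQ: PDE_compact} evaluated at the point $e^{\Lambda t}\theta$, I obtain
\[
\dot{y}(t) = DP(e^{\Lambda t}\theta)\,\Lambda e^{\Lambda t}\theta = f(P(e^{\Lambda t}\theta)) = f(y(t)),
\]
so $y$ satisfies $\dot{y}=f(y)$ with $y(0)=P(\theta)$. The orbit $t\mapsto \varphi(t,P(\theta))$ also solves this initial value problem, so uniqueness of solutions of ODEs with a smooth right-hand side gives $y(t)=\varphi(t,P(\theta))$, which is \eqref{EQ:conjugacy}.

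The main technical point to watch will be the admissibility of the argument $e^{\Lambda t}\theta$: for $y(t)$ to be well-defined, $e^{\Lambda t}\theta$ must remain inside $B^2$. I would take $B^2$ to be an open ball centered at the origin and use that $\lambda_1,\lambda_2<0$ (cf.\ Lemma~\ref{LEM: eigenpairs}) to conclude $\|e^{\Lambda t}\theta\|\le \|\theta\|$ for all $t\ge 0$, so that $e^{\Lambda t}\theta\in B^2$ and both sides of \eqref{EQ:conjugacy} are well-defined for positive time, which is the only regime relevant here since we are parameterizing a stable manifold. Finally, I would note that the tangency conditions \eqref{EQ: constraint_1} are not actually used in establishing the equivalence itself; they are imposed separately to single out, among the many smooth solutions of \eqref{EQ: PDE_compact}, the one whose image is tangent to the stable eigenspace at $\tilde{x}$ and hence parameterizes $W^s_{\mathrm{loc}}(\tilde{x})$.
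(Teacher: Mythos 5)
Your proof is correct and is precisely the standard argument (differentiate the conjugacy at $t=0$ for one direction, and use uniqueness of solutions of the IVP $\dot y=f(y)$, $y(0)=P(\theta)$ for the other) that the paper itself does not reproduce but simply cites from the parameterization-method literature. The only minor caveat is that the paper later takes $B^2=[-1,1]^2$ rather than an open ball, but since $\Lambda$ is diagonal with negative entries this set is still forward-invariant under $e^{\Lambda t}$, so your admissibility argument for $e^{\Lambda t}\theta$ goes through unchanged.
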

Since $f$ is analytic, the solution of \eqref{EQ: PDE_compact} is analytic in a neighborhood of $0 \in \R^2$ and hence we may seek $P:B^2 \to \R^4$ expressed as a power series of the form
%
\begin{equation} \label{eq:Taylor_series_manifold}
P(\theta) = \sum_{|\alpha|_1 \geq 0} a_\alpha \theta^\alpha, \quad a_\alpha = ((a_1)_\alpha,(a_2)_\alpha,(a_3)_\alpha,(a_4)_\alpha)^T \in \R^4,
\end{equation}
on its domain of convergence, where $\alpha = (\alpha_1,\alpha_2) \in \N^2$, $|\alpha|_1 =\alpha_1+\alpha_2$, $\theta = (\theta_1,\theta_2) \in B^2$ and $\theta^\alpha = \theta_1^{\alpha_1} \theta_2^{\alpha_2}$. Since $P(\theta)$ must satisfy constraints \eqref{EQ: constraint_1}, this yields the first Taylor coefficients
\begin{equation*}
    a_{(0,0)} = \tilde{x}, \quad a_{(1,0)} = \xi_1, \quad a_{(0,1)} = \xi_2.
\end{equation*}
To determine the coefficients $a_\alpha$ for $|\alpha|_1 \geq 2$, note that
\[
DP(\theta)\Lambda\theta = \sum_{|\alpha|_1 \geq 0} \alpha_1 \lambda_1 a_{\alpha} \theta ^\alpha + \sum_{|\alpha|_1 \geq 0} \alpha_2 \lambda_2 a_{\alpha} \theta ^\alpha 
        =\sum_{|\alpha|_1 \geq 0} (\alpha \cdot \lambda) a_\alpha \theta^\alpha,
\]
where $\alpha \cdot \lambda \bydef \alpha_1\lambda_1 + \alpha_2\lambda_2$, and the partial derivatives of $P(\theta)$ are obtained by differentiating under the sum.

Recall that in this work we consider $F$ to be either the logistic nonlinearity $F(u)=(1+\rho )u-\rho u^2$ or the Ricker nonlinearity $F(u)=u\exp(\rho (1-u))$, both of which are entire functions (i.e. possess no poles in the complex plane). Hence, if $b(\theta)=\sum_{|\alpha|_1\geq 0}b_\alpha\theta^\alpha$ converges on a disc $D \subset \C$, then the Taylor series of $F\circ b$ also converges on $D$. By a slight abuse of notation, we write $F\circ b(\theta)=\sum_{|\alpha|\geq 1} (F(b))_\alpha\theta^\alpha$, where for the logistic nonlinearity, $(F(b))_\alpha =  (1+\rho) b_\alpha - \rho(b *_{\cT} b)_\alpha$, which follows from Proposition \ref{prop-Taylor-multiplication}.  With this notation, the expression for $f(P(\theta))$ can be written as
\begin{equation} \label{eq:general_vector_field_expansion_Taylor}
f(P(\theta))=
 \sum_{|\alpha|_1 \geq 0} \begin{pmatrix} (\phi_1(a))_\alpha \\ (\phi_2(a))_\alpha  \\ (\phi_3(a))_\alpha  \\ (\phi_4(a))_\alpha \end{pmatrix} \theta^\alpha \bydef 
\sum_{|\alpha|_1 \geq 0}
        \begin{pmatrix}
                (a_2)_\alpha \\
                \sigma^2\left((a_1)_\alpha - (F(a_3))_\alpha \right)\\
                (a_4)_\alpha\\
                \sigma^2\left((a_3)_\alpha - (F(a_1))_\alpha \right)
        \end{pmatrix}
        \theta^\alpha.
\end{equation}
Hence, from \eqref{EQ: PDE_compact}, for $|\alpha|_1 \geq 2$, we have the following constraint on the level of Taylor coefficients
\[
    (\alpha \cdot \lambda) (a_j)_\alpha = (\phi_j(a))_\alpha, \qquad \text{for } j = 1,2,3,4.
\]
Define the map $G(a) = (G_1(a),\dots,G_4(a))$ where each $G_j(a)$ ($j = 1,2,3,4$) is given component-wise by
\begin{equation} \label{EQ:manifold_map}
    (G_j(a))_\alpha = 
    \begin{cases}
        (a_j)_{(0,0)} - \tilde{x}_j,\quad &\alpha = (0,0), \\
        (a_j)_{(1,0)} - (\xi_1)_j, \quad &\alpha = (1,0),\\
        (a_j)_{(0,1)} - (\xi_2)_j, \quad &\alpha = (0,1),\\
        (\alpha \cdot \lambda)(a_j)_\alpha - (\phi_j(a))_\alpha, \quad & |\alpha|_1 \ge 2.
    \end{cases}
\end{equation}
Notice that a solution $a = (a_1, a_2, a_3, a_4)$ to $G(a) = 0$ provides the Taylor coefficients of an analytic function that satisfies the constraints \eqref{EQ: constraint_1} and \eqref{EQ: PDE_compact}. 
Because computers cannot store infinite sequences, we work with a finite-dimensional projection of the map, that is we define the truncated map $G^{(N)}:\cT^4 \to \cT^4$ as
\[
G^{(N)} \bydef \pi^{(N)} G  \pi^{(N)}.
\]
Although $G^{(N)}$ is formally a map from $\cT^4$ to $\cT^4$, it is finite in practice since it depends only on finitely many components of its input and produces finitely many nonzero output components. Applying Newton’s method to $G^{(N)}$ yields an approximate solution $\bar{a} \in \pi^{(N)} \cT^4$ below a prescribed tolerance. Using this $\bar{a}$ we define $\bar{P}(\theta)$ just as $P(\theta)$ but with coefficients $\bar{a}$. More explicitly,
\begin{equation} \label{eq:bar_P}
\bar{P}(\theta) \bydef \sum_{|\alpha|_\infty = 0}^{N} \bar{a}_\alpha \theta^\alpha.
\end{equation}
Figure~\ref{fig:manifolds} illustrates the numerically computed local stable manifold for the logistic growth function.

\begin{figure}[h!] 
\begin{center} \includegraphics[width=.65\textwidth]{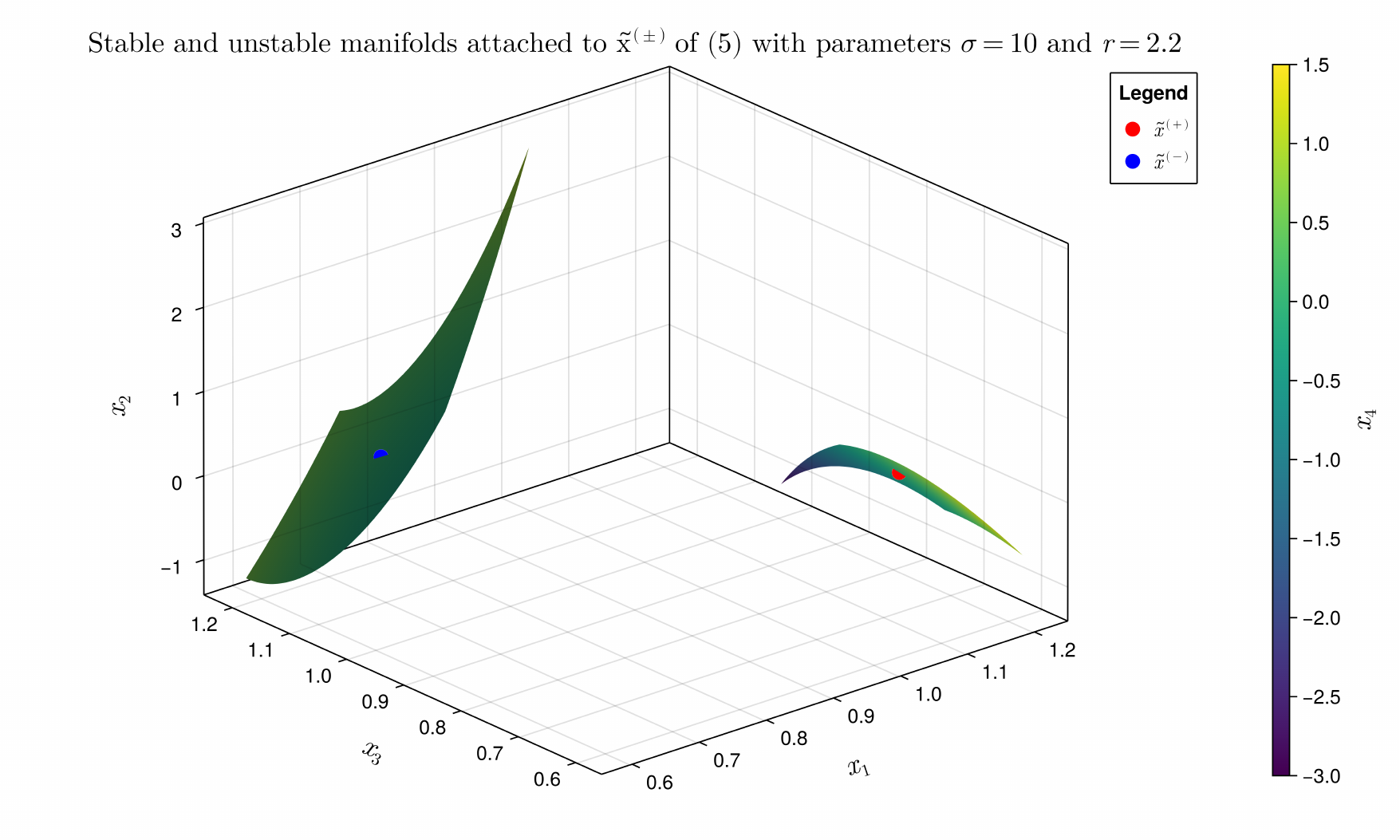} \end{center}
\vspace{-.5cm}
\caption{Plot of the three-dimensional projection of the approximate local stable manifold at $\tilde{x}^{(+)}$ and local unstable manifold at $\tilde{x}^{(-)}$. The fourth dimension is expressed in the color of the plot, specified by the color bar. These are explicitly given by ${P}([-1,1]^2)$ and $R{P}([-1,1]^2)$, respectively. This figure, and all other figures present in this paper, were created using the GLMakie.jl package \cite{Danisch2021}.}
\label{fig:manifolds}
\end{figure}

In order to compute the truncated map $G^{(N)}$ when $F$ is the Ricker growth function, it is necessary to compute a suitable approximation of the Taylor coefficients $F(a_1)$ and $F(a_3)$, for finitely-supported $a_1$ and $a_3$. In our subsequent computer implementation of the numerical (not proven) connecting orbit for the Ricker nonlinearity, these are computed automatically using the RadiiPolynomial.jl library, which uses the Fast Fourier Transform\footnote{Passing from Taylor to Fourier series is done by composing with $e^{i\psi}$; this operation can be inverted.} (FFT), followed by a pointwise operation of the Ricker nonlinearity, and an inverse FFT. As such, none of the terms of the approximations for $F(a_1)$ and $F(a_3)$ are rigorous. See \cite{vanderAalst2025,Breden2024} for some approaches that can be used to get true control on the error. Computer-assisted proofs with the Ricker nonlinearity are not in the scope of this work, so we will not provide further details.

\subsection{Solving the projected BVP using Chebyshev series} \label{sec:BVP_with_Chebyshev}

In this section we formulate a zero-finding problem \eqref{EQ:connecting_orbit_function}, whose solutions yield the Chebyshev coefficients of the solution to the projected BVP \eqref{eq:reduced_problem}. To set the stage, recall that in the previous section we described a procedure for computing a parameterization of $W_{\rm loc}^s(x^{(+)})$, which will later be made rigorous through the computer-assisted method of Section~\ref{sec:bounds_stable_manifold}. For now, suppose that such a parameterization is available, namely a map $P:B^2 \to \R^4$ convergent on $B^2 = [-1,1]^2$. Recalling the boundary value problem \eqref{eq:reduced_problem}, the boundary condition $x(2L) \in W^s(x^{(+)})$ can thus be replaced by $x(2L)=P(\theta)$ for some $\theta \in B^2$. After this substitution, and upon rescaling time from $[0,2L]$ to $[-1,1]$ via $t \mapsto \frac{1}{L}(t-L)$, the BVP \eqref{eq:reduced_problem} takes the form 
\[
\begin{cases}
    x(1) = Rx(1), \\
    x(-1) = P(\theta), \\
    \dot{x} = -Lf(x), \quad t \in [-1, 1],
\end{cases}
\]
where the parameters $\theta=(\theta_1,\theta_2) \in [-1,1]^2$ and $L>0$ remain unknowns to be determined. By Remark~\ref{eq:reversible_manifold}, the system \eqref{EQ:4DODE} is time-reversible with reversor $R(x_1,x_2,x_3,x_4) = (x_3, -x_4, x_1, -x_2)$. Consequently, the condition $x(1) = Rx(1)$ reduces to the two scalar equations $x_1(1) = x_3(1)$ and $x_2(1) = -x_4(1)$. Thus, the three additional unknowns $\theta_1$, $\theta_2$ and $L$ must be balanced with one further equation, which naturally accounts for time-translation invariance. To this end, we impose the normalization $\theta_1^2 + \theta_2^2 = 0.95$, thereby fixing the radius (in parameter space) of the circle in the domain of the parameterization of the local stable manifold. The new balanced resulting projected BVP becomes 
\begin{equation} \label{eq:balanced_BVP}
\begin{cases}
\theta_1^2 + \theta_2^2 = 0.95 \\
x_1(1) = x_3(1) \\
x_2(1) = -x_4(1) \\
x(-1) = P(\theta) \\
\dot{x} = -Lf(x), \quad t \in [-1, 1].
\end{cases}
\end{equation}

Our strategy to rigorously solve the BVP \eqref{eq:balanced_BVP} relies on Chebyshev series expansions for differential equations, following ideas from \cite{MR3148084,MR3865832,MR4068579}. Since $f$ in \eqref{EQ:4DODE} is analytic, the Cauchy–Kovalevskaya theorem (see, e.g., \cite{MR4197075} for an ODE version) ensures that each component $x_j(t)$ is analytic, and thus admits a Chebyshev expansion $x_j(t)=(u_j)_0 + 2\sum_{k \ge 1} (u_j)_k T_k(t)$ with coefficients decaying geometrically. Substituting these series into \eqref{eq:balanced_BVP}, and recalling the tridiagonal operator $\Upsilon$ from \eqref{eq:Upsilon}, we obtain the following system in terms of Chebyshev coefficients
\[
\begin{cases}
    \theta_1^2 + \theta_2^2 = 0.95, \\
    (u_1)_0 + 2\sum_{k \ge 1}(u_1)_k = (u_3)_0 + 2\sum_{k \ge 1}(u_3)_k, \\
    (u_2)_0 + 2\sum_{k \ge 1}(u_2)_k = - (u_4)_0 - 2\sum_{k \ge 1}(u_4)_k, \\
    (u_j)_0 + 2\sum_{k \ge 1} (u_j)_k (-1)^k  = P_j(\theta), &k=0 \\
    2k (u_j)_k = -L(\Upsilon\phi_j(u))_k, &k \ge 1,
\end{cases}
\]
where each $\phi_j(u) = \left( (\phi_j(u))_k \right)_{k \ge 0}$ is given component-wise by
\[
 \begin{pmatrix} (\phi_1(u))_k \\ (\phi_2(u))_k  \\ (\phi_3(u))_k  \\ (\phi_4(u))_k \end{pmatrix} \bydef 
        \begin{pmatrix}
                (u_2)_k \\
                \sigma^2\left((u_1)_k - (F(u_3))_k \right)\\
                (u_4)_k\\
                \sigma^2\left((a_3)_k - (F(u_1))_k \right)
        \end{pmatrix},
\]
where for the logistic map, we have the explicit expression $(F(u))_k = (1+\rho) u_k - \rho(u * u)_k$ with $* = *_\cT$ the discrete convolution defined in \eqref{eq:Chebyshev_product}. Turning this into a zero-finding problem, we obtain 
\begin{equation} \label{EQ:connecting_orbit_function}
\cG(L, \theta, u) \bydef \begin{pmatrix}
    \mathcal{L}(\theta) \\
    \Theta(u) \\
    \mathcal{U}(L, \theta, u) \\
\end{pmatrix}=0,
\end{equation}
where
\begin{align*}
   \mathcal{L}(\theta) &\bydef \theta_1^2 + \theta_2^2 - 0.95, \\
    \Theta_j(u) &\bydef \begin{cases}
        (u_1)_0 + 2\sum_{k \ge 1}(u_1)_k - (u_3)_0 - 2\sum_{k \ge 1}(u_3)_k, & j = 1, \\
        (u_2)_0 + 2\sum_{k \ge 1}(u_2)_k + (u_4)_0 + 2\sum_{k \ge 1}(u_4)_k, & j = 2, \\
    \end{cases} 
    \\
    \mathcal{U}_j(L, \theta, u)_k &\bydef \begin{cases}
        (u_j)_0 + 2\sum_{k \ge 1} (u_j)_k (-1)^k - P_j(\theta), &k=0, \\
        2k (u_j)_k + L(\Upsilon\phi_j(u))_k, &k \ge 1,
        \end{cases}
\end{align*}
for $j = 1,2,3,4$. Define $\mathcal{\bar{U}}$ by replacing $P(\theta)$ with $\bar{P}(\theta)$ in the definition of $\mathcal{U}$. Similarly we define $\mathcal{\bar{G}}$ by replacing $\mathcal{U}$ with $\mathcal{\bar{U}}$ in the definition of $\cG$.

\section{Constructive proof of the 2-cycle for the logistic nonlinearity} \label{sec4}

Recall that in Section~\ref{sec:parameterization_method}, we defined the map $G$ in \eqref{EQ:manifold_map}, where a vector  $a$ satisfying $G(a)=0$ encodes the coefficients of the Taylor expansion of a local stable manifold of $\tx$. In Section~\ref{sec:BVP_with_Chebyshev}, we introduced another map $\cG$ in \eqref{EQ:connecting_orbit_function}, where a vector $(L,\theta,u)$ with $\cG(L,\theta,u)=0$ determines the coefficients of a Chebyshev series representation of a solution to a projected boundary value problem, thereby yielding the desired connecting orbit. As noted earlier, our strategy for addressing these zero-finding problems relies on tools from computer-assisted proofs, in particular through the application of a Newton-Kantorovich type theorem. We open this section by stating such a theorem, which furnishes a constructive framework for proving the existence of zeros of nonlinear operators on Banach spaces.

\begin{thm}[\bf Newton-Kantorovich Theorem] \label{thm:radPolyBanach}
Let $X$ and $Y$ be Banach spaces and $G \colon X \to Y$ be a Fr\'{e}chet differentiable mapping.  
Suppose that $\bx \in X$, $A^{\dagger} \in B(X,Y)$, and $A \in B(Y,X)$. Moreover assume that $A$ is injective. Let $Y_0$, $Z_0$, and $Z_1$ be positive constants and $Z_2 \colon (0, \infty) \to [0, \infty)$ be a non-negative function satisfying 
\begin{align}
\label{eq:Y0_radPolyBanach}
	\| A G(\bar x) \|_X & \leq Y_0,	\\
\label{eq:Z0_radPolyBanach}
	\| I  - A A^\dagger \|_{B(X)} & \leq Z_0, \\
\label{eq:Z1_radPolyBanach}
\| A [ DG(\bar x) - A^\dagger] \|_{B(X)} & \leq Z_1, \\
\label{eq:Z2_radPolyBanach}
\| A [ DG(c) - DG(\bar x) ] \|_{B(X)} &\leq Z_2(r) r, 
\quad \text{for all $c \in \overline{B_r(\bar x)}$ and all  $r > 0$}. 
\end{align}
Define the {\em radii polynomial}
\begin{equation} \label{eq:RPBanach}
p(r) \bydef Z_2(r) r^2 - (1 - Z_0 - Z_1) r + Y_0.
\end{equation}
If there exists $r_0 > 0$ such that $p(r_0) < 0$, then there exists a unique $\tx \in B_{r_0}(\bar x)$ satisfying $G(\tx) = 0$.
\end{thm}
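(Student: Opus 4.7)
The plan is to convert the zero-finding problem $G(\tx)=0$ into a fixed-point problem for the Newton-like operator
\begin{equation*}
T(x) \bydef x - A G(x),
\end{equation*}
defined on $X$. Since $A$ is injective, any fixed point of $T$ satisfies $AG(\tx)=0$, hence $G(\tx)=0$. The strategy is then to apply the Banach fixed point theorem to $T$ restricted to the closed ball $\overline{B_{r_0}(\bx)}$, where the hypothesis $p(r_0)<0$ will supply both the self-map property and the contraction constant.

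First I would establish the contraction estimate by bounding $DT(x) = I - A\, DG(x)$. Inserting $\pm A A^\dagger$ and $\pm A\, DG(\bx)$, the triangle inequality gives, for any $x \in \overline{B_r(\bx)}$,
\begin{equation*}
\|DT(x)\|_{B(X)} \leq \|I - AA^\dagger\|_{B(X)} + \|A(A^\dagger - DG(\bx))\|_{B(X)} + \|A(DG(\bx) - DG(x))\|_{B(X)} \leq Z_0 + Z_1 + Z_2(r)r.
\end{equation*}
By the Banach-space mean value inequality, this yields $\|T(x)-T(y)\|_X \leq (Z_0+Z_1+Z_2(r_0)r_0)\|x-y\|_X$ for all $x,y \in \overline{B_{r_0}(\bx)}$. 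Next I would verify that $T$ maps the closed ball into itself: writing $T(x) - \bx = (T(x)-T(\bx)) - AG(\bx)$, the previous Lipschitz bound and \eqref{eq:Y0_radPolyBanach} give
\begin{equation*}
\|T(x)-\bx\|_X \leq (Z_0+Z_1+Z_2(r_0)r_0)\,r_0 + Y_0 = r_0 + p(r_0) < r_0,
\end{equation*}
where the final inequality is precisely the hypothesis $p(r_0)<0$. As a byproduct, since $Y_0 \geq 0$, the same rearrangement forces $Z_0+Z_1+Z_2(r_0)r_0 < 1$, so $T$ is a strict contraction.

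Combining these two facts, the Banach fixed point theorem produces a unique fixed point $\tx \in \overline{B_{r_0}(\bx)}$ of $T$, and the strict inequality above places $\tx$ inside the open ball $B_{r_0}(\bx)$. Injectivity of $A$ then promotes $T(\tx)=\tx$, equivalently $AG(\tx)=0$, to $G(\tx)=0$, yielding the desired unique zero in $B_{r_0}(\bx)$.

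The routine technical point that needs care is the Banach-space analogue of the mean value theorem used to pass from the pointwise bound on $\|DT(x)\|$ to a Lipschitz bound on $T$; this is standard provided one integrates $DT$ along the segment $\bx + t(x-\bx)$, which lies entirely in the convex set $\overline{B_{r_0}(\bx)}$. Beyond this, the proof is essentially bookkeeping: the radii polynomial $p$ is engineered precisely so that $p(r_0)<0$ encodes simultaneously the self-map property and the contraction constant being strictly less than one, so there is no real obstacle once the bounds \eqref{eq:Y0_radPolyBanach}--\eqref{eq:Z2_radPolyBanach} are in hand.
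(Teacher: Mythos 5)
Your proof is correct, and it is the standard contraction-mapping argument for this theorem: the paper itself states Theorem~\ref{thm:radPolyBanach} without proof, and the argument it implicitly relies on (as in the cited radii-polynomial literature) is exactly your route via the Newton-like operator $T(x)=x-AG(x)$, the mean value inequality with the $Z_0+Z_1+Z_2(r_0)r_0$ bound, and the self-map estimate $r_0+p(r_0)<r_0$. All steps check out, including the use of injectivity of $A$ to pass from $AG(\tx)=0$ to $G(\tx)=0$ and the observation that $Y_0\geq 0$ forces the contraction constant below one.
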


Having introduced the main tool for obtaining constructive existence proofs of solutions to general zero-finding problems $G=0$, we now turn in Section~\ref{sec:bounds_stable_manifold} to the explicit construction of Newton-Kantorovich bounds for the map \eqref{EQ:manifold_map} in the case where $F$ is the logistic growth function. In Section~\ref{sec:manifold_proof}, we apply these bounds to obtain a computer-assisted proof, leading to a rigorous enclosure of the Taylor coefficients in the series expansion \eqref{eq:Taylor_series_manifold}. Next, in Section~\ref{sec:bvp_explicit_bounds}, we derive the Newton-Kantorovich bounds for the map \eqref{EQ:connecting_orbit_function}, again in the logistic growth setting. Section~\ref{sec:bvp proof} is devoted to applying the Newton-Kantorovich theorem to establish a computer-assisted proof for the projected BVP. Finally, in Section~\ref{sec:results}, we combine these results to present a constructive proof of existence for the $2$-cycle of the logistic growth function.

\subsection{Explicit Newton-Kantorovich bounds for \boldmath $W^s_\text{loc}(\tilde{x}^{(+)})$ \unboldmath} \label{sec:bounds_stable_manifold}

In this section, we derive Newton-Kantorovich bounds for the map $G$ defined in \eqref{EQ:manifold_map}, in the case where $F$ is the logistic growth function. We begin by specifying the Banach spaces  $X$ and $Y$ such that $G:X \to Y$. Define
\[
\tilde{\cT} \bydef \left\{ (a_\alpha)_{\alpha \in \N^2}, a_\alpha \in \R: \norm{a}_{\tilde{\cT}} \bydef |a_0| + \sum_{|\alpha|_1 \ge 1} \frac{1}{|\alpha \cdot \lambda|} |a_\alpha| < \infty\right\}.
\]
Setting $X = \cT^4$ and $Y = \tilde{\cT}^4$, it follows from Lemma~\ref{lem:Banach_algebras} and the definition of $\tilde{\cT}$ that $G:X \to Y$. The norm on $X$ is given by
\[
    \norm{a}_X = \max_{i = 1,2,3,4} \mu_i \norm{\pi_i a}_\cT,
\]
where $\mu = (\mu_1, \mu_2, \mu_3, \mu_4)$ is a chosen sequence of weights. Throughout the rest of this section, we fix a truncation dimension $N \in \N$ and as in Section~\ref{sec:parameterization_method}, denote by $\bar{a} \in \pi^{(N)} \cT^4$ the numerical approximation of $G=0$. The next step is to introduce the linear operators $A^{\dagger} \in B(X,Y)$, and $A \in B(Y,X)$ as described in Theorem~\ref{thm:radPolyBanach}. Let
\[
    A^\dagger = 
    \begin{pmatrix}
        A^\dagger_{1,1} & \ldots & A^\dagger_{1,4} \\
        \vdots & \ddots & \vdots \\
        A^\dagger_{4,1} & \ldots & A^\dagger_{4,4}
    \end{pmatrix},
\]
where
\[
    (A^\dagger_{i,j} h_j)_\alpha = 
    \begin{cases}
        (D_{a_j} G_j ^{(N)}(\bar{a})h_j)_\alpha, &|\alpha|_\infty \le N, \\
        \delta_{i,j} (\alpha \cdot \lambda)(h_j)_\alpha, &|\alpha|_\infty > N,
    \end{cases}
\]
for $h = (h_1, h_2, h_3, h_4)\in X$. Note that $A_{i,j}^\dagger \in B(\cT, \tilde{\cT})$. The idea here being that $A^\dagger$ has the following structure. For each block, we apply the derivative of the truncated map to the finite part of the input. Then, for the diagonal blocks, we also apply a tail approximation of the real derivative. As for $A$, we begin by defining $A^{(N)} \approx DG^{(N)}(\bar{a})^{-1}$ with a block decomposition of the form
\[
    A^{(N)} = 
    \begin{pmatrix}
        A^{(N)}_{1,1} & \ldots & A^{(N)}_{1,4} \\
        \vdots & \ddots & \vdots \\
        A^{(N)}_{4,1} & \ldots & A^{(N)}_{4,4}
    \end{pmatrix},
\]
where each block $A^{(N)}_{i,j}$ has size $(N+1)^2 \times (N+1)^2$. 
We can now define
\[
    A = 
    \begin{pmatrix}
        A_{1,1} & \ldots & A_{1,4} \\
        \vdots & \ddots & \vdots \\
        A_{4,1} & \ldots & A_{4,4}
    \end{pmatrix},
\]
where
\[
    (A_{i,j} h_j)_\alpha = 
    \begin{cases}
        (A_{i,j}^{(N)} \pi^{(N)}h_j)_\alpha, &|\alpha|_\infty \le N, \\
        \frac{\delta_{i,j}}{(\alpha \cdot \lambda)} (h_j)_\alpha, &|\alpha|_\infty > N,
    \end{cases}
\]
for $h = (h_1, h_2, h_3, h_4) \in X$. The idea for $A$ is the same as for $A^\dagger$ with $A_{i,j} \in B(\tilde{\cT}, \cT)$. 

With the operators $A$ and $A^\dagger$ in place, we now proceed to derive the bounds $Y_0, Z_0, Z_1,$ and $Z_2$ needed for the application of the Newton-Kantorovich Theorem~\ref{thm:radPolyBanach}. This is done in the specific case of the map $G$ from \eqref{EQ:manifold_map}, where $\phi(a)$ in \eqref{eq:general_vector_field_expansion_Taylor} is defined with the logistic growth nonlinearity.

\subsubsection{The \boldmath $Y_0$ \unboldmath Bound }

Recall first that for the logistic growth nonlinearity, we have $(F(a_j))_\alpha =  (1+\rho) (a_j)_\alpha - \rho(a_j *_{\cT} a_j)_\alpha$. Moreover, since $\bar{a} = \bar{a}^{(N)}$, it follows that $(G(\bar{a}))_\alpha = 0$ for all multi-indices with $|\alpha|_1 > 2N$, and consequently also for all $|\alpha|_\infty > 2N$. We also have $(A_{i,j})_{\alpha,\beta} = 0$ for $|\alpha|_\infty > N$ or $|\beta|_\infty > N$ and $i \ne j$ by definition. Recall that
\[
    \norm{AG(\bar{a})}_X = \max_{i = 1,2,3,4} \mu_i \norm{\pi_iAG(\bar{a})}_{\cT}.
\]
Moreover, 
\[
        \norm{\pi_iAG(\bar{a})}_{\cT} = \sum_{|\alpha|_1 \ge 0} \left|\sum_{j=1}^{4} (A_{i,j} G_j(\bar{a}))_\alpha \right| 
            \le Y_0^{(i)} \bydef \sum_{|\alpha|_\infty = 0}^{N} \left| \sum_{j=1}^{4} (A^{(N)}_{i,j} G_j^{(N)}(\bar{a}))_\alpha \right|
            + \sum_{|\alpha|_\infty = N+1}^{2N} \left| \frac{1}{\alpha \cdot \lambda} (G_i(\bar{a}))_\alpha \right| 
\]
to have
\[
    Y_0 = \max_{i=1,2,3,4} \mu_i Y^{(i)}_0,
\]
which satisfies \eqref{eq:Y0_radPolyBanach}.

\subsubsection{The \boldmath $Z_0$ \unboldmath Bound}

We begin by defining the operator $B \bydef I - AA^\dag$ represented as
\[
    B = 
    \begin{pmatrix}
        B_{1,1} & \cdots & B_{1,4} \\
        \vdots & \ddots & \vdots \\
        B_{4,1} & \cdots & B_{4,4}
    \end{pmatrix}.
\]
Next, let $h = (h_1, h_2, h_3, h_4) \in X$ with $\norm{h}_X = 1$. Notice that $B_{i,j} \in B(\cT)$ have only finitely many non-zero entries. In fact, by construction of the linear operators $A$ and $A^\dag$, $(B_{i,j})_{\alpha,\beta} = 0$ for $|\alpha|_\infty > N$ or $|\beta|_\infty > N$. 
Hence,
\[
\norm{B_{i,j}}_{B(\cT)} = \max_{|\beta|_\infty = 0, \dots, N} \sum_{|\alpha|_\infty = 0, \dots, N} |(B_{i,j})_{\alpha, \beta}|.
\]
and using this formula, we set 
%
\[
    Z_0 = \max_{i=1,2,3,4} \mu_i \left( \sum_{j = 1}^{4} \frac{1}{\mu_j} \norm{B_{i,j}}_{B(\cT)} \right),
\]
which satisfies \eqref{eq:Z0_radPolyBanach}.

\subsubsection{The \boldmath $Z_1$ \unboldmath Bound}
Let $h \in X$ with $||h||_X = 1$, then consider $z \bydef [DG(\bar{a}) - A^\dagger]h$. We begin by showing that $(z_i)_\alpha = 0$ for $|\alpha|_\infty \le N$ and $i = 1,2,3,4$. Indeed, one can see
\[
    (z_i)_\alpha = (\pi_i DG(\bar{a})h)_\alpha - (\pi_i A^\dagger h)_\alpha 
    = \sum_{j = 1}^{4} (D_{a_j} G_i(\bar{a}) h_j)_\alpha - (A_{i,j}^\dagger h)_\alpha 
    = \sum_{j = 1}^{4} (D_{a_j} G_i(\bar{a}) h_j)_\alpha - (D_{a_j}G_i^{(N)}(\bar{a})h)_\alpha, 
\]
for $|\alpha|_\infty \le N$. Each difference in the sum is a linear combination of terms of the form
$\bar{a}_k *_{\cT} h_k - \bar{a}_k *_{\cT} \pi^{(N)}h_k = \bar{a}_k *_{\cT} (h_k - \pi^{(N)}h_k)$ (with $k \in \{ 1,2,3,4\}$), which all have coefficients equal to $0$ when $|\alpha|_\infty \le N$. Meanwhile, for $|\alpha|_\infty > N$, we have
\[
    (z_i)_\alpha = -(D \phi_i(\bar{a})h)_\alpha 
                 = - \sum_{j=1}^{4} (D_{a_j} \phi_i(\bar{a}) h_j)_\alpha.
\]
Since the tail of $A_{i,j}$ is 0 for $i \ne j$, we have $Az = (A_{1,1}z_1, \dots, A_{4,4}z_4)$ and
\[
    (A_{i,i}z_i)_\alpha = -\frac{1}{\alpha \cdot \lambda} \sum_{j=1}^{4} (D_{a_j}\phi_i(\bar{a}) h_j)_\alpha
\]
for $|\alpha|_\infty > N$, and 0 otherwise. Letting
\[
\lambda^*(N) \bydef \min_{|\alpha|_\infty > N} |\alpha \cdot \lambda| = (N+1) \min\{|\lambda_1|,| \lambda_2|\}
\]
yields
\[
{\small
\begin{aligned}
        \norm{A_{i,i}z_i}_{\cT} &= \sum_{|\alpha|_\infty > N} \left|-\frac{1}{\alpha \cdot \lambda} \sum_{j=1}^{4} (D_{a_j}\phi_i(\bar{a}) h_j)_\alpha \right| 
                     \le \frac{1}{\lambda^*(N)} \sum_{|\alpha|_\infty > N} \sum_{j=1}^{4} \left| (D_{a_j}\phi_i(\bar{a}) h_j)_\alpha \right| \\
                     &\le \frac{1}{\lambda^*(N)} \sum_{j=1}^{4} \norm{(D_{a_j}\phi_i(\bar{a}) h_j)}_{\cT} 
 \le Z^{(i)}_1 \bydef \frac{1}{\lambda^*(N)} \sum_{j=1}^{4} \frac{1}{\mu_j} \norm{D_{a_j}\phi_i(\bar{a})}_{B(\cT)}
    \end{aligned}
    }
\]
where the last inequality follows from $\norm{h_j} \le \frac{1}{\mu_j}$. Hence, a bound satisfying \eqref{eq:Z1_radPolyBanach} is given by
\[
    Z_1 = \max_{i=1,2,3,4} \mu_i Z^{(i)}_1.
\]

\subsubsection{The \boldmath $Z_2$ \unboldmath Bound}

Let $r > 0$, $b \in X$ with $\norm{b}_X \le r$, $h \in X$ with $\norm{h}_X = 1$, and let $z \bydef [DG(\bar{a}+b) - DG(\bar{a})]h$. Then
\[
(z_i)_\alpha = \begin{cases}
    0, & 0 \le |\alpha|_1 \le 1, \\
   (\Phi_i)_\alpha \bydef \sum_{j = 1}^{4} D_{a_j} \phi_i(\bar{a}+b)h_j - D_{a_j}\phi_i(\bar{a})h_j, &|\alpha|_1 > 1.
\end{cases}
\]
where
\[
\Phi_i = \begin{cases}
    0, &i = 1, \\
    2\sigma^2  \rho (b_3 *_{\cT} h_3), &i = 2, \\
    0, &i = 3, \\
    2 \sigma^2 \rho (b_1 *_{\cT} h_1), &i = 4.
\end{cases}
\]
Therefore,
{\small
\begin{align*}
    \norm{Az}_X &= \max_{i = 1,2,3,4} \mu_i \norm{\pi_i Az}_\cT \\
    &= \max_{i = 1,2,3,4} \mu_i\norm{\sum_{j = 1}^{4} A_{i,j}z_j}_{\cT} \\
    &\le \max_{i = 1,2,3,4} \mu_i \sum_{j = 1}^{4} \norm{A_{i,j}}_{B(\cT)} \norm{z_j}_{\cT} \\
    &= \max_{i = 1,2,3,4} \mu_i \left(\norm{A_{i,2}}_{B(\cT)} \norm{z_2}_{\cT} + \norm{A_{i,4}}_{B(\cT)} \norm{z_4}_{\cT}\right) \\
    &\le 2\rho r \sigma^2   \max_{i = 1,2,3,4} \mu_i \left( \frac{1}{\mu_3^2}\norm{A_{i,2}}_{B(\cT)} + \frac{1}{\mu_1^2}\norm{A_{i,4}}_{B(\cT)} \right),
\end{align*}
}
where in the last inequality, we applied the Banach algebra property of $(\cT, \norm{\cdot}_\cT, *_\cT)$ (see Lemma~\ref{lem:Banach_algebras}). Therefore, setting $Z_2$ (independent of $r$ here since the logistic growth function is quadratic) as
\[
Z_2 \bydef 2\rho \sigma^2 \max_{i = 1,2,3,4} \mu_i \left(\frac{1}{\mu_3^2}\norm{A_{i,2}}_{B(\cT)} + \frac{1}{\mu_1^2}\norm{A_{i,4}}_{B(\cT)}\right)
\]
we get that is satisfies \eqref{eq:Z2_radPolyBanach}. It remains to compute $\norm{A_{i,j}}_{B(\cT)}$. This is easy for the finite operators. For the infinite ones, namely $A_{j,j}$, we have the computable bound 
\[
{\small
\begin{split}
    \norm{A_{j,j}}_{B(\cT)} &= \sup_{|\beta|_\infty \ge 0} \left( \sum_{|\alpha|_\infty \ge 0} |A_{j,j}|_{\alpha, \beta}\right) \\
    &= \max \left\{ \sup_{0 \le |\beta|_\infty \le N} \left( \sum_{|\alpha|_\infty \ge 0} |A_{j,j}|_{\alpha, \beta}\right), \sup_{|\beta|_\infty > N} \left( \sum_{|\alpha|_\infty \ge 0} |A_{j,j}|_{\alpha, \beta}\right) \right\} \\
    &= \max \left\{ \sup_{0 \le |\beta|_\infty \le N} \left( \sum_{0 \le |\alpha|_\infty \le N} |A_{j,j}^{(N)}|_{\alpha, \beta}\right), \sup_{|\beta|_\infty > N} \frac{1}{|\beta \cdot \lambda|}\right\} \\
    &= \max \left\{ \norm{A_{j,j}^{(N)}}_{B(\pi^{(N)}\cT)}, \sup_{|\beta|_\infty > N} \frac{1}{\lambda^*(N)}\right\}.
\end{split}.
}
\]

\subsection{Computer-assisted proof for \boldmath $W^s_\text{loc}(\tilde{x}^{(+)})$ \unboldmath}
\label{sec:manifold_proof}

In this subsection, we use the computable bounds presented in Section~\ref{sec:bounds_stable_manifold} to compute a rigorous enclosure on the parametrization of $W^s_\text{loc}(\tilde{x}^{(+)})$ using the Newton-Kantorovich Theorem~\ref{thm:radPolyBanach}. To this end, let $N = 30$ and define $\bar{a} \in \pi^{(N)} \cT^4$ where its first few coefficients are given in Table~\ref{tab:manifold_coefficients}.

\begin{table}[h]
    \centerline{
    {\tiny
    \begin{tabular}{|c|c|c|c|c|c|c|c|}
    \hline
    $(\bar{a}_1)_{0,0}$& $1.162844349$ & $(\bar{a}_2)_{0,0}$ & 0 & $(\bar{a}_3)_{0,0}$ & $0.7462465593$ & $(\bar{a}_4)_{0,0}$ & 0\\
    \hline
    $(\bar{a}_1)_{1,0}$& $2.615916931 \times 10^{-2}$ & $(\bar{a}_2)_{1,0}$ & $-0.2026280542$ & $(\bar{a}_3)_{1,0}$ & $-0.1253361100$ & $(\bar{a}_4)_{1,0}$ & $0.9708493338$\\
    \hline
    $(\bar{a}_1)_{2,0}$& $2.694990900\times 10^{-2}$ & $(\bar{a}_2)_{2,0}$ & $-0.4175061949$ & $(\bar{a}_3)_{2,0}$ & $3.796812378\times 10^{-2}$ & $(\bar{a}_4)_{2,0}$ & $-0.5881996444$\\
    \hline
    $(\bar{a}_1)_{3,0}$& $-4.784945637\times 10^{-3}$ & $(\bar{a}_2)_{3,0}$ & $0.1111920885$ & $(\bar{a}_3)_{3,0}$ & $-1.379199302\times 10^{-3}$ & $(\bar{a}_4)_{3,0}$ & $3.204969557\times 10^{-2}$\\
    \hline
    $(\bar{a}_1)_{0,1}$& $-1.720597589\times 10^{-2}$ & $(\bar{a}_2)_{0,1}$ & $0.2035838522$ & $(\bar{a}_3)_{0,1}$ & $-8.243878319\times 10^{-2}$ & $(\bar{a}_4)_{0,1}$ & $0.9754288372$\\
    \hline
    $(\bar{a}_1)_{1,1}$& $1.635296640\times 10^{-2}$ & $(\bar{a}_2)_{1,1}$ & $-0.3201604410$ & $(\bar{a}_3)_{1,1}$ & $1.036356517\times 10^{-2}$ & $(\bar{a}_4)_{1,1}$ & $-0.2028991875$\\
    \hline
    $(\bar{a}_1)_{2,1}$& $-3.025706251\times 10^{-3}$ & $(\bar{a}_2)_{2,1}$ & $8.267467886\times 10^{-2}$ & $(\bar{a}_3)_{2,1}$ & $-9.212498773\times 10^{-4}$ & $(\bar{a}_4)_{2,1}$ & $2.517231728\times 10^{-2}$\\
    \hline
    $(\bar{a}_1)_{3,1}$& $2.440505209\times 10^{-4}$ & $(\bar{a}_2)_{3,1}$ & $-8.558866325\times 10^{-3}$ & $(\bar{a}_3)_{3,1}$ & $2.142513437\times 10^{-4}$ & $(\bar{a}_4)_{3,1}$ & $-7.513807404\times 10^{-3}$\\
    \hline
    $(\bar{a}_1)_{0,2}$& $3.277687706\times 10^{-3}$ & $(\bar{a}_2)_{0,2}$ & $-7.756424789\times 10^{-2}$ & $(\bar{a}_3)_{0,2}$ & $1.507182270\times 10^{-3}$ & $(\bar{a}_4)_{0,2}$ & $-3.566644223\times 10^{-2}$\\
    \hline
    $(\bar{a}_1)_{1,2}$& $-5.197174064\times 10^{-4}$ & $(\bar{a}_2)_{1,2}$ & $1.632447228\times 10^{-2}$ & $(\bar{a}_3)_{1,2}$ & $-2.094285011\times 10^{-4}$ & $(\bar{a}_4)_{1,2}$ & $6.578209081\times 10^{-3}$\\
    \hline
    $(\bar{a}_1)_{2,2}$& $6.594635560\times 10^{-5}$ & $(\bar{a}_2)_{2,2}$ & $-2.582212152\times 10^{-3}$ & $(\bar{a}_3)_{2,2}$ & $8.879511634\times 10^{-5}$ & $(\bar{a}_4)_{2,2}$ & $-3.476883997\times 10^{-3}$\\
    \hline
    $(\bar{a}_1)_{3,2}$& $-1.020718124\times 10^{-5}$ & $(\bar{a}_2)_{3,2}$ & $4.787394524\times 10^{-4}$ & $(\bar{a}_3)_{3,2}$ & $-1.803928884\times 10^{-5}$ & $(\bar{a}_4)_{3,2}$ & $8.460826799\times 10^{-4}$\\
    \hline
    $(\bar{a}_1)_{0,3}$& $-4.733965751\times 10^{-5}$ & $(\bar{a}_2)_{0,3}$ & $1.680391144\times 10^{-3}$ & $(\bar{a}_3)_{0,3}$ & $-2.921282406\times 10^{-5}$ & $(\bar{a}_4)_{0,3}$ & $1.036952387\times 10^{-3}$\\
    \hline
    $(\bar{a}_1)_{1,3}$& $9.161972092\times 10^{-6}$ & $(\bar{a}_2)_{1,3}$ & $-3.961860778\times 10^{-4}$ & $(\bar{a}_3)_{1,3}$ & $1.623223268\times 10^{-5}$ & $(\bar{a}_4)_{1,3}$ & $-7.019214352\times 10^{-4}$\\
    \hline
    $(\bar{a}_1)_{2,3}$& $-2.480914858\times 10^{-6}$ & $(\bar{a}_2)_{2,3}$ & $1.264979091\times 10^{-4}$ & $(\bar{a}_3)_{2,3}$ & $-3.813785350\times 10^{-6}$ & $(\bar{a}_4)_{2,3}$ & $1.944588590\times 10^{-4}$\\
    \hline
    $(\bar{a}_1)_{3,3}$& $5.344086815\times 10^{-7}$ & $(\bar{a}_2)_{3,3}$ & $-3.138816192\times 10^{-5}$ & $(\bar{a}_3)_{3,3}$ & $5.605903060\times 10^{-7}$ & $(\bar{a}_4)_{3,3}$ & $-3.292592337\times 10^{-5}$\\
    \hline
    \end{tabular}
    }
    }
    \caption{First few coefficients (with 10 digits of precision) of the approximate solution $\ba = (\ba_1,\dots,\ba_4)$ to $G(a)=0$ with $G$ defined component-wise in \eqref{EQ:manifold_map}.}
    \label{tab:manifold_coefficients}
\end{table}

\begin{theorem} \label{thm:manif_proof}
    Let $G:X \to Y$ be defined as in \eqref{EQ:manifold_map} with parameter values $\sigma = 10$ and $\rho = 2.2$. Then there exists $\tilde{a} \in X$ such that $G(\tilde{a}) = 0$. Moreover,
    \[
    \norm{\bar{a} - \tilde{a}}_{X} \le r_{\rm manif} \bydef 7.464746738654994 \times 10^{-14},
    \]
	where $\bar{x}$ is as defined above and 
    \[
    \mu=(\sigma, 1, 1, \sigma, 1, \sigma, 1).
    \]
\end{theorem}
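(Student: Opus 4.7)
The plan is a direct application of the Newton-Kantorovich Theorem~\ref{thm:radPolyBanach} to the map $G$ defined in \eqref{EQ:manifold_map}, using the explicit bounds derived in Section~\ref{sec:bounds_stable_manifold}. Specifically, I would start from the approximate solution $\bar a \in \pi^{(N)}\cT^4$ with $N=30$ whose leading coefficients are listed in Table~\ref{tab:manifold_coefficients}, together with the weight vector $\mu$ given in the statement. I would then assemble the operators $A^\dagger$ and $A$ as defined at the start of Section~\ref{sec:bounds_stable_manifold}: on the finite block $|\alpha|_\infty\leq N$ they encode $DG^{(N)}(\bar a)$ and a numerical inverse thereof, while outside this block they act diagonally by multiplication by $\alpha\cdot\lambda$ (respectively $1/(\alpha\cdot\lambda)$), exploiting the eigenvalues $\lambda_1,\lambda_2$ computed via Lemma~\ref{LEM: eigenpairs} at $\sigma=10$, $\rho=2.2$.

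The heart of the proof is a rigorous numerical evaluation of the four bounds. For $Y_0$, I would use that $G(\bar a)$ has finite support $|\alpha|_1\leq 2N$ because the logistic nonlinearity is quadratic and $\bar a$ itself is finitely supported (Lemma~\ref{THM: taylor max degree}); the finite part is evaluated through $A^{(N)}$ using interval arithmetic, and the tail contribution $|\alpha|_\infty > N$ is controlled by the explicit factor $1/(\alpha\cdot\lambda)$. For $Z_0$, the operator $B=I-AA^\dagger$ is identically zero outside the block $|\alpha|_\infty\leq N$, so $\|B_{i,j}\|_{B(\cT)}$ reduces to a finite column-sum computation. For $Z_1$, the key quantities are the operator norms $\|D_{a_j}\phi_i(\bar a)\|_{B(\cT)}$, which for the logistic $F$ reduce via the Banach algebra property of $\cT$ (Lemma~\ref{lem:Banach_algebras}) to simple expressions involving $\|\bar a\|_\cT$ and the scalar $\lambda^*(N)=(N+1)\min\{|\lambda_1|,|\lambda_2|\}$. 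Finally $Z_2$ is a closed-form expression involving the operator norms of the blocks $A_{i,2}$ and $A_{i,4}$, which, for the diagonal blocks, combine the finite $\pi^{(N)}$-norm with the tail bound $1/\lambda^*(N)$.

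Once the four constants are obtained in interval form, I would form the radii polynomial
\begin{equation*}
p(r)=Z_2 r^2-(1-Z_0-Z_1)r+Y_0
\end{equation*}
and verify, again via interval arithmetic, that $p(r_{\rm manif})<0$ for the claimed $r_{\rm manif}=7.464746738654994\times 10^{-14}$. Combined with the positivity of $1-Z_0-Z_1$ (which must itself be certified), this produces a negative value of $p$ at $r_{\rm manif}$ and hence, by Theorem~\ref{thm:radPolyBanach}, a unique $\tilde a\in\overline{B_{r_{\rm manif}}(\bar a)}\subset X$ satisfying $G(\tilde a)=0$.

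The main obstacle is not conceptual but computational: every step above must be carried out with validated (interval) arithmetic, and the bookkeeping is delicate because the unknown $a\in\cT^4$ is a doubly-indexed object. In particular, controlling the tails of the Jacobian blocks $A_{i,j}$ and $A_{i,j}^\dagger$ requires careful matching between the finite truncation on $|\alpha|_\infty\leq N$ and the diagonal tail scaled by $\alpha\cdot\lambda$, and the convolution evaluations for $Y_0$ need to account exactly for the spread of support predicted by Lemma~\ref{THM: taylor max degree}. These technicalities are precisely what the accompanying \texttt{RadiiPolynomial.jl}-style implementation from the repository \cite{github} is designed to handle; once the bounds are certified, the conclusion of Theorem~\ref{thm:manif_proof} follows at once.
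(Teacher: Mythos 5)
Your proposal follows essentially the same route as the paper: the paper's proof simply evaluates the $Y_0$, $Z_0$, $Z_1$, $Z_2$ bounds of Section~\ref{sec:bounds_stable_manifold} with $N=30$ via the interval-arithmetic code in \cite{github}, finds the interval of radii on which the radii polynomial is negative (whose left endpoint is $r_{\rm manif}$), and invokes Theorem~\ref{thm:radPolyBanach}. The only cosmetic difference is that the paper certifies negativity of $p$ on an open interval and takes $r_{\rm manif}$ as its left endpoint rather than checking $p(r_{\rm manif})<0$ at that exact value, which is an immaterial variation of the same argument.
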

\begin{proof}
Using the code provided in \cite{github}, we compute the bounds developed in Section~\ref{sec:bounds_stable_manifold} with $N = 30$ to obtain the numbers found in Table~\ref{tab:manifold_bounds}. More precisely, we simply run the main.jl file. This gives us a range
    \[
    r \in (7.464746738654994 \times 10^{-14}, 0.18615659613392965),
    \]
where the radii polynomial defined in \eqref{eq:RPBanach} is negative. Hence, by the Newton-Kantorovich Theorem~\ref{thm:radPolyBanach}, there exists a unique $\tilde{a} \in X$ such that $\norm{\bar{a} - \tilde{a}}_X \le r_{\rm manif}$ and $G(\tilde{a}) = 0$.    
\end{proof}

\begin{table}[h]
    \centering
    \begin{tabular}{|c|c|} 
        \hline
        $Y_0$& $6.420200972375719 \times 10^{-14}$\\
        \hline
        $Z_0$& $9.666879531520297 \times 10^{-13}$ \\
        \hline
        $Z_1$& $0.13963106975631812$\\
        \hline
        $Z_2$& $4.621748292085127$ \\
         \hline
    \end{tabular}
    \caption{Newton-Kantorovich bounds for the local stable manifold proof.}
    \label{tab:manifold_bounds}
\end{table}
\begin{lem} \label{lem:parameterization_properties}
Let $\tilde{a}, \bar{a}$ be defined as in Theorem~\eqref{thm:manif_proof} and let
\[
P(\theta) \bydef \sum_{|\alpha|_1 \ge 0} \tilde{a}_\alpha \theta^\alpha \qquad \text{and} \qquad \bar{P}(\theta) \bydef \sum_{|\alpha|_\infty = 0}^{N} \bar{a}_\alpha \theta^\alpha.
\]
Then $P([-1,1]^2)$ is a local stable manifold at $\tilde{x}^{(+)}$ of \eqref{EQ:4DODE} and $\zeta(t) \bydef P(e^{\Lambda t}\theta)$ solves the IVP
    \[
    \begin{cases}
        \zeta(0) = P(\theta), \\
        \dot{\zeta} = f(\zeta), &t \ge 0,
    \end{cases}
    \]
    for $\theta \in [-1,1]^2$. Moreover, for $j=1,2,3,4$ and $\theta \in [-1,1]^2$,
    \[
    |P_j(\theta) - \bar{P}_j(\theta)| \leq r_{\rm manif}.
    \]
    Finally, we have $\zeta(t) \to \tilde{x}^{(+)}$ as $t \to \infty$.
\end{lem}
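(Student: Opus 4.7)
The plan is to use the condition $G(\tilde a)=0$ to directly invoke Theorem~\ref{THM:Parameterization}, then read off each of the four claims of the lemma.

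First, I would establish that $P$ is well-defined as a continuous function on $[-1,1]^2$. Since $\tilde a \in \cT^4$ means $\|\tilde a_j\|_\cT=\sum_{|\alpha|_1\ge 0}|(\tilde a_j)_\alpha|<\infty$ for each $j$, and $|\theta^\alpha|\le 1$ on $[-1,1]^2$, the series defining $P$ converges absolutely and uniformly on the closed polydisk. The series is then analytic on a neighborhood of $0$, with termwise differentiable derivatives. The same reasoning applied to $\tilde a-\bar a$ gives the uniform bound claim at the end: for any $\theta\in[-1,1]^2$,
\[
|P_j(\theta)-\bar P_j(\theta)|
\le \sum_{|\alpha|_1 \ge 0}|(\tilde a_j-\bar a_j)_\alpha|
=\|\tilde a_j-\bar a_j\|_\cT
\le \tfrac{1}{\mu_j}\|\tilde a-\bar a\|_X
\le r_{\rm manif},
\]
where the last step uses $\mu_j\ge 1$ for each of the four weights used in Theorem~\ref{thm:manif_proof}, and the penultimate step uses the bound provided by that theorem.

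Next, I would translate $G(\tilde a)=0$ into the hypotheses of Theorem~\ref{THM:Parameterization}. The first three rows of \eqref{EQ:manifold_map} give $P(0)=\tilde x^{(+)}$, $\partial_{\theta_1}P(0)=\xi_1$, and $\partial_{\theta_2}P(0)=\xi_2$, i.e.\ $DP(0)=V$. For $|\alpha|_1\ge 2$ the relations $(\alpha\cdot\lambda)(\tilde a_j)_\alpha=(\phi_j(\tilde a))_\alpha$ are precisely the Taylor coefficients, in the two indeterminates $\theta=(\theta_1,\theta_2)$, of the identity $DP(\theta)\Lambda\theta=f(P(\theta))$ (using Proposition~\ref{prop-Taylor-multiplication} to identify the Taylor coefficients of $f\circ P$ with $\phi(\tilde a)$). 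Since both sides of this PDE are analytic on the open polydisk $(-1,1)^2$ and all Taylor coefficients agree, the identity theorem for analytic functions yields the PDE \eqref{EQ: PDE_compact} pointwise on $(-1,1)^2$. Theorem~\ref{THM:Parameterization} then delivers the flow conjugacy $\varphi(t,P(\theta))=P(e^{\Lambda t}\theta)$, valid for all $\theta$ in the domain and all $t\ge 0$ (since $e^{\Lambda t}$ contracts $[-1,1]^2$ into itself as $\lambda_1,\lambda_2<0$).

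With the conjugacy in hand, both remaining claims are immediate. Setting $\zeta(t)=P(e^{\Lambda t}\theta)$ and differentiating the conjugacy at $t=0^+$ shows $\zeta(0)=P(\theta)$ and $\dot\zeta(t)=f(\zeta(t))$, solving the IVP. The image $P([-1,1]^2)$ contains $\tilde x^{(+)}=P(0)$, is tangent to the stable eigenspace $\operatorname{span}(\xi_1,\xi_2)$ at that equilibrium, and is forward-invariant under $\varphi$ by the conjugacy combined with the contractivity of $e^{\Lambda t}$; these properties together with analyticity of $P$ qualify it as a local stable manifold at $\tilde x^{(+)}$. Finally, since $\lambda_1,\lambda_2<0$, $e^{\Lambda t}\theta\to 0$ exponentially as $t\to\infty$, and by continuity of $P$, $\zeta(t)=P(e^{\Lambda t}\theta)\to P(0)=\tilde x^{(+)}$.

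The only mildly delicate step is the passage from the coefficient-wise equality implicit in $G(\tilde a)=0$ to pointwise validity of the PDE on $(-1,1)^2$; this is a routine application of absolute convergence and the identity theorem, but it is the one place where we need to be careful that the series manipulations are genuinely justified by the $\ell^1$ hypothesis on $\tilde a$ rather than only formally. Everything else is a direct bookkeeping consequence of Theorems~\ref{thm:manif_proof} and~\ref{THM:Parameterization}.
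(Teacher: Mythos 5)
Your proposal is correct and follows essentially the same route as the paper: absolute convergence of the series from the $\ell^1$ bound, the coefficient-wise relations from $G(\tilde a)=0$ yielding the constraints \eqref{EQ: constraint_1} and the PDE \eqref{EQ: PDE_compact} (hence the conjugacy and the stable-manifold/IVP/limit claims), and the termwise estimate $|P_j(\theta)-\bar P_j(\theta)|\le\|\tilde a_j-\bar a_j\|_\cT\le r_{\rm manif}$. You simply spell out the passage from coefficient identities to the pointwise PDE and the role of the weights $\mu_j\ge 1$, which the paper leaves implicit under ``by construction of $G$.''
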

\begin{proof}
    We begin by ensuring that $P(\theta)$ is well-defined on $[-1,1]^2$. Indeed, for $j = 1,2,3,4$ and $0 \le |\theta|_1, |\theta|_2 \le 1$, we have
    \[
        \sum_{|\alpha|_1 \ge 0} |(\tilde{a}_j)_\alpha| |\theta^\alpha|
        \le \sum_{|\alpha|_1 \ge 0} |(\tilde{a}_j)_\alpha|
        = \norm{(\tilde{a}_j)}_\cT
        \le \norm{\tilde{a}_j - \bar{a}_j}_\cT + \norm{\bar{a}_j}_\cT \le r_{\rm manif} + \norm{\bar{a}_j}_\cT < \infty.
    \]
So $P_j(\theta)$ is absolutely convergent and hence converges. 
By construction of $G$, $P(\theta)$ solves \eqref{EQ: PDE_compact} with constraints \eqref{EQ: constraint_1}, $P([-1,1]^2)$ is a local stable manifold at $\tilde{x}^{(+)}$ of \eqref{EQ:4DODE} and $\zeta(t)$ solves the IVP above. As for the inequality, for $j=1,2,3,4$ and $\theta \in [-1,1]^2$, we have
    \[
        |P_j(\theta) - \bar{P}_j(\theta)| 
        \le \sum_{|\alpha|_\infty \ge 0} |(\tilde{a}_j)_\alpha - (\bar{a}_j)_\alpha| |\theta^\alpha| 
        \le \sum_{|\alpha|_\infty \ge 0} |(\tilde{a}_j)_\alpha - (\bar{a}_j)_\alpha| 
        = \norm{\tilde{a}_j - \bar{a}_j}_\cT \le r_{\rm manif},
    \]
    where the last inequality follows from Theorem~\ref{thm:manif_proof}. As for the last statement,
    \[
    \lim_{t \to \infty} \zeta(t) = P(0) = \tilde{a}_{0,0} = \tilde{x}^{(+)},
    \]
    since $G(\tilde{a}) = 0$ by Theorem~\ref{thm:manif_proof}.
\end{proof}

\subsection{Newton-Kantorovich bounds for solution to the projected BVP}
\label{sec:bvp_explicit_bounds}

In this section, we derive Newton-Kantorovich bounds for the projected BVP map $\cG$ defined in \eqref{EQ:connecting_orbit_function}, in the case where $F$ is the logistic growth function. We begin by specifying the Banach spaces $\cX$ and $\cY$ such that $\cG: \cX \to \cY$. Setting
\[
\tilde \cC_\nu \bydef \left\{ (u_k)_{k \in \N}, u_k \in \R: \norm{\cdot}_{\cC_\nu} \bydef |u_0| + 2 \sum_{k > 0} \frac{|u_k|}{2k} \nu^k < \infty\right\},
\]
$\cX = \R^3 \times (\cC_\nu)^4$ and $\cX = \R^3 \times (\tilde \cC_\nu)^4$, it follows from Lemma~\ref{lem:Banach_algebras} and the definition of the sequence space $\tilde \cC_\nu$ that $\cG: \cX \to \cY$. In addition to the Banach spaces, it will be important to define projections on this space for notation later. First, noticing that the unknowns in $\cX$ are $x=(L,\theta_1,\theta_2,u_1,u_2,u_3,u_4)$, we define
\[
\pi_L \bydef \pi_1, \quad \pi_{\theta_1} \bydef \pi_2, \quad \pi_{\theta_2} \bydef \pi_3, \quad \pi_{u_1} \bydef \pi_4, \quad \pi_{u_2} \bydef \pi_5, \quad \pi_{u_3} \bydef \pi_6 \quad \text{and} \quad \pi_{u_4} \bydef \pi_7.
\]
With some abuse of notation, we further define
\[
\begin{split}
    \pi_L \cX, \pi_{\theta_1}\cX, \pi_{\theta_2}\cX &\bydef \R, \\
    \pi_{u_1}\cX, \pi_{u_2}\cX, \pi_{u_3}\cX, \pi_{u_4}\cX &\bydef \cC_\nu.
\end{split}
\]
Then, for $h = (h_L, h_{\theta_1}, h_{\theta_2}, h_{u_1}, h_{u_2}, h_{u_3}, h_{u_4}) \in \cX$, we define
\[
h_\theta \bydef (h_{\theta_1}, h_{\theta_2}), \quad h_u \bydef (h_{u_1}, h_{u_2}, h_{u_3}, h_{u_4}),
\]
so we write $h = (h_L, h_\theta, h_u) \in \cX$. Finally, we define a norm on our space
\[
    \norm{x}_\cX = \max_{\alpha \in \{L, \theta, u\}} \mu_\alpha \norm{\pi_\alpha x}_{\pi_\alpha \cX},
\]
where $\max_{\alpha \in \{L, \theta, u\}}$ is shorthand for $ \max_{\alpha \in \{L, \theta_1, \theta_2, u_1, u_2, u_3, u_4 \}}$ and $\mu = (\mu_L, \mu_{\theta_1}, \mu_{\theta_2}, \mu_{u_1}, \mu_{u_2}, \mu_{u_3}, \mu_{u_4})$ is a chosen sequence of weights that will minimize certain bounds from the Newton-Kantorovich Theorem. Let $\bar{x} = (\bar{L}, \bar{\theta}, \bar{u})$ denote an approximate solution obtained by applying Newton’s method to the finite-dimensional projection
\[
\cG^{(N)} \bydef \pi^{(N)} \cG \pi^{(N)}
\]
for some $N \in \N$. Note that we define $\bar{\cG}^{(N)}$ similarly. Recalling Definition~\ref{defn:projections}, the projection $\pi^{(N)}:\cX \to \cX$ is defined, for $(L,\theta,u) \in \cX$, by
\[
\pi^{(N)}(L,\theta,u) \bydef \left(L,\theta, \pi^{(N)} u \right).
\]
The next step is to introduce the linear operators $A^{\dagger} \in B(\cX,\cY)$, and $A \in B(\cY,\cX)$ as described in Theorem~\ref{thm:radPolyBanach}. Let
\renewcommand\arraystretch{1.5}
\[
{\small 
A^\dagger = \begin{pmatrix}
    A^\dagger_{L,L}           & A^\dagger_{L,\theta_1}           & A^\dagger_{L,\theta_2}           &        A^\dagger_{L, u_1} & A^\dagger_{L, u_2} & A^\dagger_{L, u_3} & A^\dagger_{L, u_4} \\
    A^\dagger_{\theta_1,L}      & A^\dagger_{\theta_1,\theta_1} & A^\dagger_{\theta_1,\theta_2} & A^\dagger_{\theta_1, u_1} &  A^\dagger_{\theta_1, u_2} & A^\dagger_{\theta_1, u_3} & A^\dagger_{\theta_1, u_4} &   \\ A^\dagger_{\theta_2,L}      & A^\dagger_{\theta_2,\theta_1} & A^\dagger_{\theta_2,\theta_2} & A^\dagger_{\theta_2, u_1} &  A^\dagger_{\theta_2, u_2} & A^\dagger_{\theta_2, u_3} & A^\dagger_{\theta_2, u_4} &   \\
    A^\dagger_{u_1,L}           & A^\dagger_{u_1,\theta_1}  & A^\dagger_{u_1,\theta_2}         & A^\dagger_{u_1, u_1} & A^\dagger_{u_1, u_2} &  A^\dagger_{u_1, u_3} & A^\dagger_{u_1, u_4} &   \\
    A^\dagger_{u_2,L}           & A^\dagger_{u_2,\theta_1}  & A^\dagger_{u_2,\theta_2}         & A^\dagger_{u_2, u_1} & A^\dagger_{u_2, u_2} &  A^\dagger_{u_2, u_3} & A^\dagger_{u_2, u_4} &   \\
    A^\dagger_{u_3,L}           & A^\dagger_{u_3,\theta_1}  & A^\dagger_{u_3,\theta_2}         & A^\dagger_{u_3, u_1} & A^\dagger_{u_3, u_2} &  A^\dagger_{u_3, u_3} & A^\dagger_{u_3, u_4} &   \\
    A^\dagger_{u_4,L}           & A^\dagger_{u_4,\theta_1}  & A^\dagger_{u_4,\theta_2}         & A^\dagger_{u_4, u_1} & A^\dagger_{u_4, u_2} &  A^\dagger_{u_4, u_3} & A^\dagger_{u_4, u_4} &   \\
\end{pmatrix}
}
\]
where
\[
\begin{split}
    A^\dagger_{L,\theta_j} h_{\theta_j} &= D_{\theta_j} \mathcal{L}(\bar{\theta}) h_{\theta_j}, \\
    A^\dagger_{\theta_i,u_j} h_{u_j} &= D_{u_j}\Theta^{(N)}_i(\bar{u}) h_{u_j}, \\
    (A^\dagger_{u_i,L} h_L)_k &= \begin{cases}
        (D_L \bar{\mathcal{U}}_i^{(N)}(\bar{x}) h_L)_k, & 0 \le k \le N, \\
        0, &k > N,
    \end{cases}\\
    (A^\dagger_{u_i,\theta_j} h_{\theta_j})_k &= \begin{cases}
        (D_{\theta_j} \bar{P}_i(\bar{\theta}) h_{\theta_j}), & k = 0, \\
        0, &k \ge 1,
    \end{cases}\\
    (A^\dagger_{u_i,u_j} h_{u_j})_k &= \begin{cases}
        (D_{u_j} \bar{\mathcal{U}}_i^{(N)}(\bar{x}) h_{u_j})_k, &0 \le k \le N, \\
        \delta_{i,j} 2k (h_{u_j})_k, &k > N,
    \end{cases}
\end{split}
\]
for $h = (h_L, h_\theta, h_u) \in \cX$. The rest of the operators are identically zero. Note that $A^\dagger_{\alpha, \beta} \in B(\pi_\beta \cX, \pi_\alpha \cY).$ From now on, we use the more compact notation
\[
A^\dagger = \begin{pmatrix}
        A_{L,L}^\dagger & A_{L,\theta}^\dagger & A_{L, u}^\dagger \\
        A_{\theta,L}^\dagger & A_{\theta,\theta}^\dagger & A_{\theta, u}^\dagger      \\
        A_{u,L}^\dagger & A_{u,\theta}^\dagger & A_{u, u}^\dagger
    \end{pmatrix}
\]
instead. The idea is the same as the manifold proof. We want $A^\dagger$ to be the truncated version of the derivative with some infinite decaying tail in the diagonal terms. As for $A$, we begin by defining $A^{(N)} \approx D\cG^{(N)}(\bar{x})^{-1}$. Then let
\[
A^{(N)} = \begin{pmatrix}
    A^{(N)}_{L,L} & A^{(N)}_{L,\theta} & A^{(N)}_{L, u} \\
    A^{(N)}_{\theta,L} & A^{(N)}_{\theta,\theta} & A^{(N)}_{\theta, u} \\
    A^{(N)}_{u,L} & A^{(N)}_{u,\theta} & A^{(N)}_{u, u} \\
\end{pmatrix},
\]
with $A^{(N)}_{\alpha, \beta} \in B(\pi^{(N)} \pi_\beta \cY, \pi^{(N)} \pi_\alpha \cX)$. Finally, we let
\[
    A = \begin{pmatrix}
        A_{L,L}           & A_{L,\theta}           & A_{L, u} \\
        A_{\theta,L}      & A_{\theta,\theta}      & A_{\theta, u}      \\
        A_{u,L}           & A_{u,\theta}           & A_{u, u}      \\
    \end{pmatrix},
\]
in the same fashion as $A^\dagger$ such that $A_{\alpha, \beta} \in B(\pi_\beta \cY, \pi_\alpha \cX)$ where the blocks are defined below. Firstly, 
\[
    A_{\alpha, \beta} h_\beta = A^{(N)}_{\alpha, \beta} \pi^{(N)}h_\beta
\]
for $\alpha \in \{L, \theta_1, \theta_2\}$ and $\beta \in \{L, \theta_1, \theta_2, u_1, u_2, u_3, u_4\}$. Next, 
\[
(A_{\alpha, \beta} h_\beta)_k = \begin{cases}
    (A^{(N)}_{\alpha, \beta} h_\beta)_k & 0 \le k \le N, \\
    0 & k > N,
\end{cases}
\]
for $\alpha \in \{u_1, u_2, u_3, u_4\}$ and $\beta \in \{L, \theta_1, \theta_2\}$. Finally,
\[
(A_{\alpha, \beta} h_\beta)_k = \begin{cases}
    (A^{(N)}_{\alpha, \beta} \pi^{(N)} h_\beta)_k & 0 \le k \le N, \\
     \delta_{\alpha, \beta} \frac{1}{2k} (h_\beta)_k & k > N,
\end{cases}
\]
for $\alpha, \beta \in \{u_1, u_2, u_3, u_4\}$. Having introduced the operators $A, A^\dagger$, we now turn to the derivation of the bounds $Y_0, Z_0, Z_1,$ and $Z_2$ required to apply the Newton-Kantorovich Theorem~\ref{thm:radPolyBanach}.

\subsubsection{The \boldmath $Y_0$ \unboldmath Bound}

Note that $(\mathcal{U}(\bar{x}))_k = (\mathcal{U}(\bar{L},\bar{\theta},\bar{u}))_k = 0$ for $k > 2N$ since $\mathcal{U}$ has max degree 2 and $\bar{u} = \pi^{(N)} \bar{u}$. We also have $(A_{u_i,u_j})_{n,m} = 0$ for $|n| > N$ or $|m| > N$ for $i \ne j$. We now need to bound
\[
    \norm{A\cG(\bar{x})}_\cX = \max_{\alpha \in \{L,\theta, u\}}\mu_\alpha \norm{\pi_\alpha A\cG(\bar{x})}_{\pi_\alpha \cX}.
\]
To begin, for $\alpha \in \{L, \theta_1, \theta_2 \}$ we have
\[
{\small
\begin{split}
        \norm{\pi_\alpha A\cG(\bar{x})}_{\pi_\alpha \cX} &= \left| \sum_{\beta \in \{L, \theta, u\}} A^{(N)}_{\alpha, \beta} \pi_\beta \cG^{(N)}(\bar{x}) \right| \\
        &\le \sum_{\beta \in \{L, \theta, u\}} \left| A^{(N)}_{\alpha, \beta} \pi_\beta (\cG^{(N)}(\bar{x})- \bar{\cG}^{(N)}(\bar{x}) + \bar{\cG}^{(N)}(\bar{x})) \right|\\
        &\le \sum_{\beta \in \{L, \theta, u\}} \left| A^{(N)}_{\alpha, \beta} \pi_\beta (\cG^{(N)}(\bar{x})- \bar{\cG}^{(N)}(\bar{x})) \right| + \sum_{\beta \in \{L, \theta, u\}} \left| A^{(N)}_{\alpha, \beta} \pi_\beta \bar{\cG}^{(N)}(\bar{x}) \right|\\
        &= \sum_{j = 1}^{4} \left| (A^{(N)}_{\alpha, u_j})_0 (P_j(\bar{\theta}) - \bar{P}_j(\bar{\theta})) \right| + \sum_{\beta \in \{L, \theta, u\}} \left| A^{(N)}_{\alpha, \beta} \pi_\beta \bar{\cG}^{(N)}(\bar{x}) \right|\\
        &\le \sum_{j = 1}^{4} \left| (A^{(N)}_{\alpha, u_j})_0 \right| \left|P_j(\bar{\theta}) - \bar{P}_j(\bar{\theta})\right| + \sum_{\beta \in \{L, \theta, u\}} \left| A^{(N)}_{\alpha, \beta} \pi_\beta \bar{\cG}^{(N)}(\bar{x}) \right|\\
        &\le Y_0^{(\alpha)} \bydef r_{\rm manif}\sum_{j = 1}^{4} \left| (A^{(N)}_{\alpha, u_j})_0 \right| + \sum_{\beta \in \{L, \theta, u\}} \left| A^{(N)}_{\alpha, \beta} \pi_\beta \bar{\cG}^{(N)}(\bar{x}) \right|,
\end{split}
}
\]
where the last inequality comes from Lemma~\ref{lem:parameterization_properties}. Next, for $\alpha \in \{u_1, u_2, u_3, u_4\}$, we have
\[
{\small
\begin{split}
    \norm{\pi_\alpha A\cG(\bar{x})}_{\pi_\alpha \cX} &= \sum_{k = 0}^{N} \left| \sum_{\beta \in \{L, \theta, u \}} (A^{(N)}_{\alpha, \beta} \pi_\beta \cG^{(N)}(\bar{x}))_k\right| \omega_k
    + 2 \sum_{k = N+1}^{2N} \frac{1}{2k} \left|(\pi_\alpha \cG(\bar{x}))_k\right| \nu^k \\
    &\le \sum_{k = 0}^{N}  \sum_{\beta \in \{L, \theta, u \}} \left|\left[A^{(N)}_{\alpha, \beta} \pi_\beta (\cG^{(N)}(\bar{x})- \bar{\cG}^{(N)}(\bar{x}) + \bar{\cG}^{(N)}(\bar{x}))\right]_k\right| \omega_k \\
    &+ 2 \sum_{k = N+1}^{2N} \frac{1}{2k} \left|\left[\pi_\alpha (\cG(\bar{x})- \bar{\cG}(\bar{x}) + \bar{\cG}(\bar{x}))\right]_k\right| \nu^k \\
    &\le \sum_{k = 0}^{N} \sum_{\beta \in \{L, \theta, u \}} \left| \left[A^{(N)}_{\alpha, \beta} \pi_\beta (\cG^{(N)}(\bar{x})- \bar{\cG}^{(N)}(\bar{x}))\right]_k\right| \omega_k 
    + \sum_{k = 0}^{N} \sum_{\beta \in \{L, \theta, u \}} \left| (A^{(N)}_{\alpha, \beta} \pi_\beta  \bar{\cG}^{(N)}(\bar{x}))_k\right| \omega_k \\
    &+ 2 \sum_{k = N+1}^{2N} \frac{1}{2k} \left|\left[\pi_\alpha (\cG(\bar{x})- \bar{\cG}(\bar{x}))\right]_k\right| \nu^k 
    + 2 \sum_{k = N+1}^{2N} \frac{1}{2k} \left|(\pi_\alpha \bar{\cG}(\bar{x}))_k\right| \nu^k.
\end{split}
}
\]
We will simply the first and third term in this expression individually. For the first, we have
\[
{\small
\begin{split}
    \sum_{k = 0}^{N} \sum_{\beta \in \{L, \theta, u \}} \left| \left[A^{(N)}_{\alpha, \beta} \pi_\beta (\cG^{(N)}(\bar{x})- \bar{\cG}^{(N)}(\bar{x}))\right]_k\right| \omega_k
    &= \sum_{k = 0}^{N} \sum_{\beta \in \{L, \theta, u \}} \left| \left[A^{(N)}_{\alpha, \beta} \pi_\beta (\cG^{(N)}(\bar{x})- \bar{\cG}^{(N)}(\bar{x}))\right]_k\right| \omega_k \\
    &= \sum_{k = 0}^{N} \sum_{j=1}^{4} \left| \left[(A^{(N)}_{\alpha, u_j})_{k,0} (P_j(\bar{\theta}) - \bar{P}_j(\bar{\theta}))\right]_k\right| \omega_k \\
    &\le \sum_{j=1}^{4} \sum_{k = 0}^{N} \left| (A^{(N)}_{\alpha, u_j})_{k,0} (P_j(\bar{\theta}) - \bar{P}_j(\bar{\theta}))\right| \omega_k \\
    &\le \sum_{j=1}^{4} \sum_{k = 0}^{N} \left| (A^{(N)}_{\alpha, u_j})_{k,0}\right| \left| (P_j(\bar{\theta}) - \bar{P}_j(\bar{\theta}))\right| \omega_k \\
    &\le r_{\rm manif}\sum_{j=1}^{4} \sum_{k = 0}^{N} \left| (A^{(N)}_{\alpha, u_j})_{k,0}\right| \omega_k 
    = r_{\rm manif}\sum_{j=1}^{4} \norm{(A^{(N)}_{\alpha, u_j})_{k,0}}_{\pi^{(N)}\cC_\nu},
\end{split}
}
\]
where the last inequality comes from Lemma~\ref{lem:parameterization_properties}. As for the third term, it is zero since $\pi_\alpha(\cG(\bar{x}))_k = \pi_\alpha(\bar{\cG}(\bar{x}))_k$ when $k \ge 1$. Bringing everything together, we obtain
\begin{align*}
    \norm{\pi_\alpha A\cG(\bar{x})}_{\pi_\alpha \cX} \le Y_0^{(\alpha)} &\bydef
    r_{\rm manif}\sum_{j=1}^{4} \norm{(A^{(N)}_{\alpha, u_j})_{k,0}}_{\pi^{(N)}\cC_\nu} 
    + \sum_{k = 0}^{N} \sum_{\beta \in \{L, \theta, u \}} \left| (A^{(N)}_{\alpha, \beta} \pi_\beta  \bar{\cG}^{(N)}(\bar{x}))_k\right| \omega_k \\
& \qquad + 2 \sum_{k = N+1}^{2N} \frac{1}{2k} \left|(\pi_\alpha \bar{\cG}(\bar{x}))_k\right| \nu^k,
\end{align*}
for $\alpha \in \{u_1, u_2, u_3, u_4\}$. Now simply let
\[
    Y_0 = \max_{\alpha \in \{ L, \theta, u \}} \mu_\alpha Y^{(\alpha)}_0.
\]
By construction, we then have
\[
\norm{A\cG(\bar{x})}_\cX \le Y_0.
\]
\subsubsection{The \boldmath $Z_0$ \unboldmath Bound}
To begin, we define $B = I - AA^\dagger$ block-wise as
\[
    B = \begin{pmatrix}
        B_{L,L}           & B_{L,\theta}           & B_{L, u} \\
        B_{\theta,L}      & B_{\theta,\theta}      & B_{\theta, u}      \\
        B_{u,L}           & B_{u,\theta}           & B_{u, u}      \\
    \end{pmatrix},
\]
in the same fashion as $A^\dagger, A$. Note that each block has only finite nonzero part by definition of the tails of $A_{u_j,u_j}$ and $A^\dagger_{u_j, u_j}$ in the same way as the manifold proof in Section \ref{sec:bounds_stable_manifold}. Now let $h = (h_L, h_\theta, h_u) \in \cX$ with $\norm{h}_\cX \le 1$ and consider $\norm{Bh}_\cX$.
\[
\norm{\pi_\alpha Bh}_{\pi_\alpha \cX} = \norm{\sum_{\beta \in \{ L, \theta, u\}} B_{\alpha, \beta} h_\beta}_{\pi_\alpha \cX} 
\le \sum_{\beta \in \{ L, \theta, u\}} \norm{B_{\alpha, \beta} h_\beta}_{\pi_\alpha \cX} 
\le Z^{(\alpha)}_0 \bydef \sum_{\beta \in \{ L, \theta, u\}} \frac{1}{\mu_\beta}\norm{B_{\alpha, \beta}}_{B(\pi_\beta \cX, \pi_\alpha \cX)}
\]
since $||h_\beta|| \le \frac{1}{\mu_\beta}$. We can simply compute
\[
    Z_0 = \max_{\alpha \in \{L, \theta, u\}} \mu_\alpha Z^{(\alpha)}_0,
\]
where the infinite sums and suprema are replaced by finite sums from 0 to $N$ and maximums over $0, \dots, N$. By construction, we then have
\[
    \norm{I - AA^\dagger}_{B(\cX)} \le Z_0.
\]
\subsubsection{The \boldmath $Z_1$ \unboldmath Bound}

The begin, let $h = (h_L, h_\theta, h_u) \in \cX$ with $\norm{h}_\cX \le 1$. We then let $z \bydef [D\cG(\bar{x}) - A^\dagger]h$. Note that

\[
{\small
    z_\alpha = \begin{cases}
        0, & \alpha = L, \\
        2 \sum_{k \ge N+1} (h_{u_1})_k - 2 \sum_{k \ge N+1} (h_{u_3})_k, & \alpha = \theta_1, \\
        2 \sum_{k \ge N+1} (h_{u_2})_k + 2 \sum_{k \ge N+1} (h_{u_4})_k, & \alpha = \theta_2, \\
    \end{cases}
    }
\]
and
\[
{\small
    (z_{u_j})_k = \begin{cases}
         2 \sum_{k \ge N+1} (-1)^k (h_{u_j})_k + \left(D_{\theta_1}P(\bar{\theta}) - D_{\theta_1}\bar{P}(\bar{\theta})\right)h_{\theta_1} \\+ \left(D_{\theta_2}P(\bar{\theta}) - D_{\theta_2}\bar{P}(\bar{\theta})\right)h_{\theta_2}, &k = 0, \\
        \bar{L}(\psi_{u_j})_k, &1 \le k \le N,\\
         (\Upsilon\varphi_{u_j})_k, & k > N,
    \end{cases}
    }
\]
for $j = 1,2,3,4$, where
\[
{\small
    (\psi_{u_j})_k = \begin{cases}
        \begin{cases}
            0, &1 \le k < N,\\
            -(h_{u_2})_{k+1}, &k = N, \\
        \end{cases} & j = 1,\\
        \begin{cases}
            2\sigma^2 \rho (\Upsilon(\bar{u}_3*h_{u_3}^{(\infty)}))_k, &1 \le k < N,\\
            \sigma^2 \left[(-h_{u_1})_{k+1} + (1+\rho)(h_{u_3})_{k+1} + 2\rho (\Upsilon(\bar{u}_3*h_{u_3}^{(\infty)}))_k \right], &k = N,\\
        \end{cases} & j = 2,\\
        \begin{cases}
            0, & 1 \le k < N,\\
            -(h_{u_2})_{k+1},& k = N, \\
        \end{cases} & j = 3,\\
        \begin{cases}
            2\sigma^2\rho (\Upsilon(\bar{u}_1*h_{u_1}^{(\infty)}))_k, & 1 \le k < N,\\
            \sigma^2 \left[(-h_{u_3})_{k+1} + (1+\rho)(h_{u_1})_{k+1} + 2\rho (\Upsilon(\bar{u}_1*h_{u_1}^{(\infty)}))_k \right],& k = N,\\
        \end{cases} & j = 4,\\
    \end{cases}  
    }  
\]
and
\[
{\small
    \varphi_{u_j} = \begin{cases}
        u_2h_L + \bar{L}h_{_2},& j = 1,\\
        \sigma^2 (\left[\bar{u}_1 - (1+\rho)\bar{u}_3 + \rho \bar{u}_3^2 \right]h_L+ \bar{L}\left[h_{u_1} - (1+\rho)h_{u_3} + 2\rho \bar{u}_3*h_{u_3}\right]),& j = 2,\\
        \bar{u}_4h_L + \bar{L}h_{u_4},& j = 3,\\
        \sigma^2(\left[\bar{u}_3 - (1+\rho)\bar{u}_1 + \rho \bar{u}_1^2 \right]h_L + \bar{L}\left[h_{u_3} - (1+\rho)h_{u_1} + 2\rho \bar{u}_1*h_{u_1}\right]),& j = 4,\\
    \end{cases}  
    }  
\]
These forms can be found by going term by term comparing $D\cG(\bar{x}) h$ and $A^\dagger h$. Indeed, notice that the $2k u_k$ term is exactly copied by definition of $A^\dagger$. Now, one can bound $|(z_{u_j})_k|$ for $k = 0, \dots, N$ as follows. First, for $k = 0$, we have
\[
{\small
\begin{split}
    \left|(z_{u_j})_0\right| &= \left|2 \sum_{k \ge N+1} (-1)^k (h_{u_j})_k + \left(D_{\theta_1}P_j(\bar{\theta}) - D_{\theta_1}\bar{P}_j(\bar{\theta})\right)h_{\theta_1} + \left(D_{\theta_2}P_j(\bar{\theta}) - D_{\theta_2}\bar{P}_j(\bar{\theta})\right)h_{\theta_2}\right| \\
    &\le \left| 2 \sum_{k \ge N+1} (-1)^k (h_{u_j})_k \right| + \left|D_{\theta_1}P_j(\bar{\theta}) - D_{\theta_1}\bar{P}_j(\bar{\theta})\right||h_{\theta_1}| + \left|D_{\theta_2}P(\bar{\theta}) - D_{\theta_2}\bar{P}_j(\bar{\theta})\right| |h_{\theta_2}|.
\end{split}
}
\]
We will handle each of these terms individually. First, since $\nu \ge 1$,
\[
{\small
\begin{split}
    \left| 2 \sum_{k \ge N+1} (-1)^k (h_{u_j})_k \right| &\le 2 \sum_{k \ge N+1} \left|(h_{u_j})_k \right| \le \frac{1}{\nu^{N+1}}2 \sum_{k \ge N+1} \left|(h_{u_j})_k \right| \nu^k \\
    &\le \frac{1}{\nu^{N+1}} \left[ \left| (h_{u_j})_0 \right| + 2 \sum_{k \ge N+1} \left|(h_{u_j})_k \right| \nu^k \right] 
    = \frac{1}{\nu^{N+1}} \norm{h_{u_j}}_{\cC_\nu} 
    \le \frac{1}{\nu^{N+1}} \frac{1}{\mu_{u_j}}.
\end{split}
}
\]
Second,
\[
{\small
\begin{split}
    \left|D_{\theta_1}P_j(\bar{\theta}) - D_{\theta_1}\bar{P}_j(\bar{\theta})\right||h_{\theta_1}| &= |h_{\theta_1}| \left |\sum_{|\alpha|_1 \ge 0} \alpha_1\tilde{a}_\alpha \bar{\theta}^\alpha - \sum_{|\alpha|_1 \ge 0} \alpha_1\bar{a}_\alpha \bar{\theta}^\alpha\right|
    \le \frac{1}{\mu_{\theta_1}} \sum_{|\alpha|_1 \ge 0} \alpha_1|\tilde{a}_\alpha - \bar{a}_\alpha| |\bar{\theta}|^\alpha \\
    &\le \frac{r_{\rm manif}}{\mu_{\theta_1}} \sum_{|\alpha|_1 \ge 0} \alpha_1 |\bar{\theta}|^\alpha 
    = \frac{r_{\rm manif}}{\mu_{\theta_1}} \sum_{\alpha_1 \ge 0} \sum_{\alpha_2 \ge 0} \alpha_1 |\bar{\theta}_1|^{\alpha_1} |\bar{\theta}_2|^{\alpha_2}\\
    &= \frac{r_{\rm manif}}{\mu_{\theta_1}} \left(\sum_{\alpha_1 \ge 0} \alpha_1 |\bar{\theta}_1|^{\alpha_1} \right) \left(\sum_{\alpha_2 \ge 0} |\bar{\theta}_2|^{\alpha_2} \right)
    = \frac{r_{\rm manif}}{\mu_{\theta_1}} \frac{|\bar{\theta}_1|}{(1-\bar{|\theta}_1|)^2} \frac{1}{1-|\bar{\theta}_2|},
\end{split}
}
\]
where the last equality comes from studying various geometric series. In a similar fashion, one obtains
\[
\left|D_{\theta_2}P_j(\bar{\theta}) - D_{\theta_2}\bar{P}_j(\bar{\theta})\right||h_{\theta_1}| \le \frac{r_{\rm manif}}{\mu_{\theta_2}} \frac{|\bar{\theta}_2|}{(1-|\bar{\theta}_2|)^2} \frac{1}{1-|\bar{\theta}_1|}.
\]
We then define
\[
(\hat{z}_{u_j})_0 \bydef \frac{1}{\nu^{N+1}} \frac{1}{\mu_{u_j}} + \frac{r_{\rm manif}}{\mu_{\theta_1}} \frac{\bar{\theta}_1}{(1-\bar{\theta}_1)^2} \frac{1}{1-\bar{\theta}_2} + \frac{r_{\rm manif}}{\mu_{\theta_2}} \frac{\bar{\theta}_2}{(1-\bar{\theta}_2)^2} \frac{1}{1-\bar{\theta}_1},
\]
such that $|(z_{u_j})_0| \le (\hat{z}_{u_j})_0$. Using Lemma~\ref{THM: dual estimates}, and recalling the definition of $\Psi_k$, for $k = 1, \dots, N$, we have
\[
    |(z_{u_j})_k| \le (\hat{z}_{u_j})_k \bydef L(\hat{\psi}_{u_j})_k
\]
where
\[
    (\hat{\psi}_{u_j})_k = \begin{cases}
        \begin{cases}
            0, & 1 \le k < N,\\
            \frac{1}{2 \mu_{u_2}\nu^{k+1}},& k = N, \\
        \end{cases} & j = 1,\\
        \begin{cases}
            \frac{2\sigma^2 \rho }{\mu_{u_3}} (|\Upsilon|\Psi(\bar{u}_3))_k, & 1 \le k < N,\\
            \sigma^2 \left[\left(\frac{1}{\mu_{u_1}} + \frac{1+\rho}{\mu_{u_3}} \right) \frac{1}{2\nu^{k+1}} + \frac{2 \rho}{\mu_{u_3}} (|\Upsilon|\Psi(\bar{u}_3))_k \right], & k = N,\\
        \end{cases} & j = 2,\\
        \begin{cases}
            0, & 1 \le k < N,\\
            \frac{1}{2\mu_{u_4}\nu^{k+1}},& k = N, \\
        \end{cases} & j = 3,\\
        \begin{cases}
            \frac{2\sigma^2 \rho}{\mu_{u_1}} (|\Upsilon|\Psi(\bar{u}_1))_k, & 1 \le k < N,\\
            \sigma^2 \left[\left(\frac{1}{\mu_{u_3}} + \frac{1+\rho }{\mu_{u_1}} \right) \frac{1}{2\nu^{k+1}} + \frac{2\rho }{\mu_{u_1}} (|\Upsilon|\Psi(\bar{u}_1))_k \right], & k = N,\\
        \end{cases} & j = 4\\
    \end{cases}    
\]
where $|\Upsilon|$ is the operator $\Upsilon$ with all coefficients replaced by their absolute value, and $\Psi \in B(\cC_\nu)$ is defined element-wise by
\[
\Psi(u)_k \bydef \Psi_k(u), \quad 0 \le k \le N+1.
\]
Then we bound $|z_L|, |z_\theta|$ by
\[
    \hat{z}_L = 0, \quad \hat{z}_{\theta_1} = \left(\frac{1}{\mu_{u_1}} + \frac{1}{\mu_{u_3}}\right)\frac{1}{\nu^{(N+1)}}, \quad \hat{z}_{\theta_2} = \left(\frac{1}{\mu_{u_2}} + \frac{1}{\mu_{u_4}}\right) \frac{1}{\nu^{(N+1)}}
\]
in the same fashion as $|(z_{u_j})_0|$. We can now build $\hat{z}$ such that it bounds $z$ component-wise. Recall that $z = [D\cG(\bar{x}) - A^\dagger]h$ and now define $w \bydef Az$ as
\[
    w = (w_L, w_\theta, w_u).
\]
Now for $\alpha \in \{L, \theta_1, \theta_2 \}$ we have
\[
    |w_\alpha| = |\pi_\alpha A^{(N)}z^{(N)}| \le Z_1^{(\alpha)} \bydef \pi_\alpha|A^{(N)}|\hat{z}^{(N)},
\]
since their operators do not have an infinite tail. For $j \in \{1,2,3,4\}$, we have
\[
{\small
    \begin{aligned}
        \norm{w_{u_j}}_{\cC_\nu} &= \sum_{k = 0}^{N} |(A^{(N)}z^{(N)})_k| \omega_k + \sum_{k \ge N+1} \frac{1}{2k} |(z_{u_j})_k| \omega_k \\
         &\le \sum_{k = 0}^{N} (|A^{(N)}| \hat{z}^{(N)})_k \omega_k + \frac{1}{2(N+1)}\sum_{k \ge N+1} |(\Upsilon\varphi_{u_j})_k| \omega_k \\
         &\le \sum_{k = 0}^{N} (|A^{(N)}| \hat{z}^{(N)})_k \omega_k + \frac{1}{2(N+1)}\norm{\Upsilon\varphi_{u_j}}_{\cC_\nu} \\
         &\le \sum_{k = 0}^{N} (|A^{(N)}| \hat{z}^{(N)})_k \omega_k + \frac{1}{2(N+1)}\norm{\Upsilon}_{B(\cC_\nu)}\norm{\varphi_{u_j}}_{\cC_\nu} \\
         &= \sum_{k = 0}^{N} (|A^{(N)}| \hat{z}^{(N)})_k \omega_k + \frac{\nu}{(N+1)}\norm{\varphi_{u_j}}_{\cC_\nu},
    \end{aligned}
    }
\]
where the last equality is obtained by Lemma~\ref{lem: upsilon operator norm}. Using the triangle inequality and the Banach algebra property, we obtain
\[
{\small
    \norm{\varphi_{u_j}}_{\cC_\nu} \le 
    \varphi_{u_j}^{(\infty)} = \begin{cases}
        \frac{\norm{\bar{u}_2}_{\cC_\nu}}{\mu_L} + \frac{\bar{L}}{\mu_{u_2}} &j = 1,\\
        \sigma^2 \left[ \frac{1}{\mu_L}\left(\norm{\bar{u}_1}_{\cC_\nu} + (1+\rho)\norm{\bar{u}_3}_{\cC_\nu} + \rho \norm{\bar{u}_3^2}_{\cC_\nu}\right) + \bar{L} \left(\frac{1}{\mu_{u_1}} + \frac{1+\rho }{\mu_{u_3}} + \frac{2 \rho \norm{\bar{u}_3}_{\cC_\nu}}{\mu_{u_3}}\right)) \right]&j = 2,\\
        \frac{\norm{\bar{u}_4}_{\cC_\nu}}{\mu_L} + \frac{\bar{L}}{\mu_{u_4}} &j = 3,\\
        \sigma^2 \left[ \frac{1}{\mu_L}\left(\norm{\bar{u}_3}_{\cC_\nu} + (1+\rho)\norm{\bar{u}_1}_{\cC_\nu} + \rho \norm{\bar{u}_1^2}_{\cC_\nu}\right) + \bar{L}\left(\frac{1}{\mu_{u_3}} + \frac{1+\rho }{\mu_{u_1}} + \frac{2\rho \norm{\bar{u}_1}_{\cC_\nu}}{\mu_{u_1}}\right) \right]&j = 4.
    \end{cases}
    }
\]
Therefore
\[
\norm{w_{u_j}}_{\cC_\nu} \le Z_1^{(u_j)} \bydef \sum_{k = 0}^{N} (|A^{(N)}| \hat{z}^{(N)})_k \omega_k + \frac{\nu}{(N+1)}\varphi_{u_j}^{(\infty)}.
\]
Hence, we set
\[
    Z_1 = \max_{\alpha \in \{ L, \theta, u\}} \mu_\alpha Z_1^{(\alpha)}.
\]
\subsubsection{The \boldmath $Z_2$ \unboldmath Bound}
Now we aim to compute some $Z_2( r )$ such that
\[
    \norm{A[D\cG(\bar{x} + b) - D\cG(\bar{x})]}_{B(\cX)} \le Z_2( r ) r , \quad \forall b \in B_r(0),  \quad 0 \le r \le r_*.
\]
To begin, let 
$h,b \in \cX$ with $\norm{h}_\cX = 1$ and $\norm{b}_\cX \le r$ consider $z \bydef [DG(\bar{x} + b) - DG(\bar{x})]h$. Then
\[
    \begin{aligned}
        z_L &= 2b_{\theta_1}h_{\theta_1} + 2b_{\theta_2}h_{\theta_2}, \\
        z_{\theta_1} &=  z_{\theta_2} = 0.
    \end{aligned}
\]
Moreover,
\[
    (z_{u_j})_k = \begin{cases}
        \left[D_\theta P(\bar{\theta}+b_\theta) - D_\theta P(\bar{\theta})\right]h_\theta,& k = 0, \\
        (\Upsilon(\Phi_{u_j}h_L + L\psi_{u_j}))_k & k > 0, \\
    \end{cases}
\]
for $j = 1,2,3,4$, where
\[
    \Phi_{u_j} = \begin{cases}
        b_{u_2} & j = 1, \\
        \sigma^2 \left[ b_{u_1} - (1+\rho)b_{u_3} + 2u_3b_{u_3} + b_{u_3}^2 \right]& j = 2, \\
        b_{u_4}& j = 3, \\
        \sigma^2 \left[ b_{u_3} - (1+\rho)b_{u_1} + 2u_1b_{u_1} + b_{u_1}^2 \right]& j = 4, 
    \end{cases}
    \qquad \text{and} \qquad
        \psi_{u_j} = \begin{cases}
        0 & j = 1, \\
        2\sigma^2 \rho b_{u_3}h_{u_3} & j = 2, \\
        0 & j = 3, \\
        2\sigma^2 \rho b_{u_1}h_{u_1} & j = 4. 
    \end{cases}
\]
Now, we compute an upper bound for $\norm{Az}_\cX$. Below, we will compute a bound for $\norm{z}_\cX$ which implies that $z \in \cX$. Therefore, we have $\norm{Az}_\cX \le \norm{A}_{B(\cX)} \norm{z}_\cX$. To begin, we have
\[
|z_L| \le 2\left( \frac{1}{\mu_{\theta_1}^2} + \frac{1}{\mu_{\theta_2}^2}\right)r  \quad \text{and} \quad |z_{\theta_1}| = |z_{\theta_2}| = 0 .
\]
So, we define
\[
    Z_2^{(L)} \bydef 2\left( \frac{1}{\mu_{\theta_1}^2} + \frac{1}{\mu_{\theta_2}^2}\right) \quad \text{and} \quad
    Z_2^{(\theta_1)}, Z_2^{(\theta_1)} \bydef 0.
\]
Next, for $j = 1, 2, 3, 4$ we have
\[
    \begin{aligned}
        \norm{z_{u_j}}_{\cC_\nu} &= |(z_{u_j})_0| + \sum_{k > 0} |(z_{u_j})_k| \omega^k \\
         &= |(z_{u_j})_0| + \sum_{k > 0} |(\Upsilon(\Phi_{u_j}h_L + L\psi_{u_j}))_k| \omega^k \\
         &\le |(z_{u_j})_0| + \norm{\Upsilon(\Phi_{u_j}h_L + L\psi_{u_j})}_{\cC_\nu} \\
         &\le |(z_{u_j})_0| + \norm{\Upsilon}_{B(\cC_\nu)} \left[\norm{\Phi_{u_j}h_L}_{\cC_\nu} + \norm{L\psi_{u_j}}_{\cC_\nu} \right]\\
         &\le |(z_{u_j})_0| + 2\nu \left[\frac{1}{\mu_L}\norm{\Phi_{u_j}}_{\cC_\nu} + L\norm{\psi_{u_j}}_{\cC_\nu} \right],
    \end{aligned}
\]
where the last inequality comes from the Banach algebra property of $\cC_\nu$.
It remains to find upper bounds for $|(z_{u_j})_0|, \norm{\Phi_{u_j}}_{\cC_\nu}, \norm{\psi_{u_j}}_{\cC_\nu}$. Recalling that $\norm{b}_\cX \le r \le r_*$, one can see that
\[
    \norm{\Phi_{u_j}}_{\cC_\nu} \le r \hat{\Phi}_{u_j} \bydef r  \begin{cases}
        \frac{1}{\mu_{u_2}}, & j = 1, \\
        \sigma^2 \left[ \frac{1}{\mu_{1}} + \frac{1+\rho}{\mu_{u_3}} + 2\frac{\norm{\bar{u}_3}_{\cC_\nu}}{\mu_{u_3}} + \frac{r_*}{\mu_{u_3}^2} \right],& j = 2, \\
        \frac{1}{\mu_{u_4}}, & j = 3, \\
        \sigma^2 \left[ \frac{1}{\mu_{3}} + \frac{1+\rho}{\mu_{u_1}} + 2\frac{\norm{\bar{u}_1}_{\cC_\nu}}{\mu_{u_1}} + \frac{r_*}{\mu_{u_1}^2} \right],& j = 4, \\
    \end{cases}
\]
and
\[
    \norm{\psi_{u_j}}_{\cC_\nu} \le \gamma\hat{\psi}_{u_j} \bydef r \begin{cases}
        0, & j = 1, \\
        \frac{2\sigma^2r}{\mu_{u_3}^2},& j = 2, \\
        0, & j = 3, \\
        \frac{2\sigma^2r}{\mu_{u_1}^2},& j = 4. \\
    \end{cases}
\]
As for $|(z_{u_j})_0|$, this will require more work. By the mean value inequality and letting 
\[
\varepsilon \bydef \frac{r}{\min\{\mu_{\theta_1}, \mu_{\theta_2}\}},
\]
we have
\[
{\small
\begin{split}
    |D_\theta P_j(\theta + b_\theta) - D_\theta P_j(\theta)| &\le \sup_{c \in B_{r \varepsilon}(\bar{\theta})} \norm{D^2 P_j(c; \tilde
    a)}_{B(\mathbb{R}^2, B(\mathbb{R}^2, \mathbb{R}))} ||b_\theta||_{\infty} \\
     &\le r \varepsilon \sup_{c \in B_{r \varepsilon}(\bar{\theta})} \norm{D^2_\theta P_j(c)}_{B(\mathbb{R}^2, B(\mathbb{R}^2, \mathbb{R}))}.\\
\end{split}
}
\]
Now
\[
{\small
\begin{split}
    \norm{D^2_\theta P_j(c)}_{B(\mathbb{R}^2, B(\mathbb{R}^2, \mathbb{R}))} &= \sup_{\norm{t}_\infty = 1} \sup_{\norm{s}_\infty = 1} |D^2_\theta P_j(c)ts|
      = \sup_{\norm{t}_\infty,  \norm{s}_\infty = 1} \left| \sum_{i,k = 1}^{2} \frac{\partial^2P_j(c)}{\partial \theta_i \partial\theta_k} t_k s_i\right| \\
      &\le  \left| \frac{\partial^2P_j(c)}{\partial \theta_1 ^2} + 2\frac{\partial^2P_j(c)}{\partial \theta_1 \partial\theta_2} + \frac{\partial^2P_j(c)}{\partial \theta_2 ^2}\right| \\
      &\le \left| \frac{\partial^2\bar{P}_j(c)}{\partial \theta_1 ^2} + 2\frac{\partial^2\bar{P}_j(c)}{\partial \theta_1 \partial\theta_2} + \frac{\partial^2\bar{P}_j(c)}{\partial \theta_2 ^2}\right| 
      + \left| \frac{\partial^2P_j(c)}{\partial \theta_1 ^2} - \frac{\partial^2\bar{P}_j(c)}{\partial \theta_1 ^2} \right| 
      + 2\left| \frac{\partial^2P_j(c)}{\partial \theta_1 \partial \theta_2} - \frac{\partial^2\bar{P}_j(c)}{\partial \theta_1 \partial \theta_2} \right| \\
      &\quad +\left| \frac{\partial^2P_j(c)}{\partial \theta_2 ^2} - \frac{\partial^2\bar{P}_j(c)}{\partial \theta_2 ^2} \right|.
\end{split}
}
\]
Note that the second derivatives of $\bar{P}_j(c)$ are computable since $\bar{a}$ is finite. As for the other differences, we start with 
\[
{\small
\begin{split}
    \left| \frac{\partial^2P_j(c)}{\partial \theta_1 ^2} - \frac{\partial^2\bar{P}_j(c)}{\partial \theta_1 ^2} \right| 
    &= \left| \sum_{|\alpha|_1 \ge 0} (\alpha_1 + 1)(\alpha_1 + 2) (\tilde{a}_{(\alpha_1 + 2, \alpha_2)}- \bar{a}_{(\alpha_1 + 2, \alpha_2)}) c^\alpha\right| \\
    &\le \sum_{|\alpha|_1 \ge 0} (\alpha_1 + 1)(\alpha_1 + 2) |\tilde{a}_{(\alpha_1 + 2, \alpha_2)}- \bar{a}_{(\alpha_1 + 2, \alpha_2)}| |c|^\alpha \\
    &\le r_{\rm manif} \sum_{|\alpha|_1 \ge 0} (\alpha_1 + 1)(\alpha_1 + 2) |c|^\alpha \\
    &\le r_{\rm manif} \sum_{\alpha_1 \ge 0} \sum_{\alpha_2 \ge 0}(\alpha_1 + 1)(\alpha_1 + 2) |c_1|^{\alpha_1}  |c_2|^{\alpha_2} \\
    &\le r_{\rm manif} \left(\sum_{\alpha_1 \ge 0} (\alpha_1 + 1)(\alpha_1 + 2) |c_1|^{\alpha_1}\right) \left( \sum_{\alpha_2 \ge 0}  |c_2|^{\alpha_2}  \right) 
    = r_{\rm manif} \frac{2}{(1-|c_1|)^3} \frac{1}{1-|c_2|},
\end{split}
}
\]
where the last equality comes from studying various geometric series. For the others, one obtains the bounds
\[
\begin{split}
    2\left| \frac{\partial^2P_j(c)}{\partial \theta_1 \partial \theta_2} - \frac{\partial^2\bar{P}_j(c)}{\partial \theta_1 \partial \theta_2} \right| &\le 2r_{\rm manif} \frac{1}{(1-|c_1|)^2} \frac{1}{(1-|c_2|)^2}, \\
    \left| \frac{\partial^2P_j(c)}{\partial \theta_2 ^2} - \frac{\partial^2\bar{P}_j(c)}{\partial \theta_2 ^2} \right| & \le r_{\rm manif} \frac{1}{1-|c_1|} \frac{2}{(1-|c_2|)^3},
\end{split}
\]
in a similar fashion. Using these calculations, we define the constants
\[
\begin{split}
     \zeta_1^{(u_j)}&\bydef \left| \frac{\partial^2\bar{P}_j(c)}{\partial \theta_1 ^2} + 2\frac{\partial^2\bar{P}_j(c)}{\partial \theta_1 \partial\theta_2} + \frac{\partial^2\bar{P}_j(c)}{\partial \theta_2 ^2}\right|, \\
     \zeta_2^{(u_j)}&\bydef \sup_{c \in B_{r \varepsilon}(\bar{\theta})} 2r_{\rm manif} \left( \frac{1}{(1-|c_1|)^3} \frac{1}{1-|c_2|} + \frac{1}{(1-|c_1|)^2} \frac{1}{(1-|c_2|)^2} + \frac{1}{(1-|c_1|)} \frac{1}{(1-|c_2|)^3}\right).
\end{split}
\]
With these constants, we let
\[
Z_2^{(u_j)} \bydef \varepsilon \left(\zeta_1^{(u_j)} + \zeta_2^{(u_j)}\right) + 2\nu \left[ \frac{1}{\mu_L} \hat{\Phi}_{u_j} + L \hat{\psi}_{u_j}\right],
\]
for $j = 1,2,3,4$. Finally, we let
\[
Z_2 \bydef \norm{A}_\cX \max_{\alpha \in \{L, \theta, u \}} \mu_\alpha Z_2^{(\alpha)}.
\]
It remains to compute $\norm{A}_{B(\cX)}$. To do this, recall that
\[
\norm{A}_{B(\cX)} \le \max_{\alpha \in \{L, \theta, u\}}  \mu_\alpha\sum_{\beta \in \{ L, \theta, u\}} \frac{1}{\mu_\beta}\norm{A_{\alpha, \beta}}_{B(\pi_\beta \cX, \pi_\alpha \cX)}.
\]
When the block $A_{\alpha, \beta}$ is finite, computing the norm is easy. When it is infinite, it requires a little bit of bookkeeping. Let us consider the infinite blocks $A_{u_j, u_j}$ for $j = 1,2,3,4$.
\[
{\small
\begin{split}
    \norm{A_{u_j,u_j}}_{B(\cC_\nu)} &= \sup_{\ell \ge 0} \left(\frac{1}{\omega_\ell}\sum_{k \ge 0} |(A_{u_j,u_j})_{k, \ell}| \omega_k \right) \\
    &= \max\left\{ \max_{0 \le \ell \le N} \left( \frac{1}{\omega_\ell}\sum_{k \ge 0} |(A_{u_j,u_j})_{k, \ell}| \omega_k \right), \sup_{\ell > N} \left(\frac{1}{\omega_\ell}\sum_{k \ge 0} |(A_{u_j,u_j})_{k, \ell}| \omega_k \right)\right\} \\
    &= \max\left\{ \max_{0 \le \ell \le N} \left(\frac{1}{\omega_\ell} \sum_{0 \le k \le N} |(A_{u_j,u_j}^{(N)})_{k, \ell}| \omega_k \right), \sup_{\ell > N} \frac{1}{2\ell}\right\} \\
    &= \max\left\{ \norm{A_{u_j,u_j}^{(N)}}_{B(\pi^{(N)}\cC_\nu)}, \frac{1}{2(N+1)}\right\}.
\end{split}
}
\]
The other infinite blocks $A_{u_i, u_j}$ for $i \ne j$ do not have the infinite tail, so are treated as finite blocks.

\subsection{Computer-assisted proof for the projected boundary value problem}
\label{sec:bvp proof}
In this subsection, we use the computable bounds presented in Section~\ref{sec:bvp_explicit_bounds} to compute a rigorous error bound on the solution $(\tilde{L}, \tilde{\theta}, \tilde{u})$ of $\cG(L, \theta, u) = 0$ as defined in \eqref{EQ:connecting_orbit_function}. To this end, let $N = 500$ and define the approximate solution $\bar{x} \bydef (\bar{L}, \bar{\theta}, \bar{u}) \in \R^3 \times \pi^{(N)} (\cC_\nu)^4$ whose first coefficients are given by Table~\ref{tab:bvp_coefficients}.
\begin{table}[h]
    \centerline{
    {\tiny
    \begin{tabular}{|c|c|c|c|c|c|c|c|}
    \hline
    $\bar{L}$ & $0.1762980548$ & $\bar{\theta}_1$ & $-0.7294203070$ & $\bar{\theta}_2$ & $0.6464874443$ &&\\
    \hline
    $(\bar{u}_1)_0$& $1.081354585$ & $(\bar{u}_2)_0$ & $0.4104976985$ & $(\bar{u}_3)_0$ & $0.8901579593$ & $(\bar{u}_4)_0$ & $-0.5394882617$\\
    \hline
    $(\bar{u}_1)_1$& $-3.621204648\times 10^{-2}$ & $(\bar{u}_2)_1$ & $0.1338049704$ & $(\bar{u}_3)_1$ & $4.980712484\times 10^{-2}$ & $(\bar{u}_4)_1$ & $-9.358238514\times 10^{-2}$\\
    \hline
    $(\bar{u}_1)_2$& $-6.202797003\times 10^{-3}$ & $(\bar{u}_2)_2$ & $-3.071344384\times 10^{-4}$ & $(\bar{u}_3)_2$ & $4.447448886\times 10^{-3}$ & $(\bar{u}_4)_2$ & $2.554491345\times 10^{-2}$\\
    \hline
    $(\bar{u}_1)_3$& $-1.718834821\times 10^{-5}$ & $(\bar{u}_2)_3$ & $-6.929356063\times 10^{-3}$ & $(\bar{u}_3)_3$ & $-7.613899801\times 10^{-4}$ & $(\bar{u}_4)_3$ & $7.325112475\times 10^{-3}$\\
    \hline
    $(\bar{u}_1)_4$& $1.560628361 \times 10^{-4}$ & $(\bar{u}_2)_4$ & $-8.921102022\times 10^{-4}$ & $(\bar{u}_3)_4$ & $-1.678731718\times 10^{-4}$ & $(\bar{u}_4)_4$ & $-3.676803394\times 10^{-4}$\\
    \hline
    $(\bar{u}_1)_5$& $1.652754362\times 10^{-5}$ & $(\bar{u}_2)_5$ & $1.524162831\times 10^{-4}$ & $(\bar{u}_3)_5$ & $6.253534298\times 10^{-6}$ & $(\bar{u}_4)_5$ & $-2.925857256\times 10^{-4}$\\
    \hline
    $(\bar{u}_1)_6$& $-2.243112874\times 10^{-6}$ & $(\bar{u}_2)_6$ & $4.536716506\times 10^{-5}$ & $(\bar{u}_3)_6$ & $4.409649493\times 10^{-6}$ & $(\bar{u}_4)_6$ & $-1.296659605\times 10^{-5}$\\
    \hline
    $(\bar{u}_1)_7$& $-5.879251489\times 10^{-7}$ & $(\bar{u}_2)_7$ & $-2.646668330\times 10^{-7}$ & $(\bar{u}_3)_7$ & $1.755548932\times 10^{-7}$ & $(\bar{u}_4)_7$ & $7.563892997\times 10^{-6}$\\
    \hline
    $(\bar{u}_1)_8$& $1.724141013\times 10^{-9}$ & $(\bar{u}_2)_8$ & $-1.320542829\times 10^{-6}$ & $(\bar{u}_3)_8$ & $-8.467653327\times 10^{-8}$ & $(\bar{u}_4)_8$ & $9.743887558\times 10^{-7}$\\
    \hline
    $(\bar{u}_1)_9$& $1.317980398\times 10^{-8}$ & $(\bar{u}_2)_9$ & $-1.081917304\times 10^{-7}$ & $(\bar{u}_3)_9$ & $-9.879385589\times 10^{-9}$ & $(\bar{u}_4)_9$ & $-1.209594179\times 10^{-7}$\\
    \hline
    \end{tabular}
    }
    }
    \caption{First few coefficients of approximate solution to \eqref{EQ:connecting_orbit_function} with 10 digits of precision.}
    \label{tab:bvp_coefficients}
\end{table}
\begin{theorem} \label{thm:bvp_proof}
    Let $\nu = 1.05$ and $\cG: \cX \to \cY$ be defined as in \eqref{EQ:connecting_orbit_function} with parameter values $\sigma = 10$ and $\rho = 2.2$. Then there exists $\tilde{x} \bydef (\tilde{L}, \tilde{\theta}, \tilde{u}) \in \cX$ such that $\cG(\tilde{x}) = 0$. Moreover,
    \[
    \norm{\tilde{x} - \bar{x}}_\cX \le r_{\rm bvp} \bydef 4.189559197816045 \times 10^{-12},
    \]
    where $\bar{x}$ is as defined above and 
    \[
    \mu=(\sigma, 1, 1, \sigma, 1, \sigma, 1).
    \]
\end{theorem}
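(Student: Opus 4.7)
The plan is to apply the Newton-Kantorovich Theorem \ref{thm:radPolyBanach} to the map $\cG: \cX \to \cY$ defined in \eqref{EQ:connecting_orbit_function}, using the explicit bounds $Y_0, Z_0, Z_1, Z_2$ derived in Section \ref{sec:bvp_explicit_bounds} together with the linear operators $A$ and $A^{\dagger}$ constructed there. The setup closely parallels the proof of Theorem \ref{thm:manif_proof}, but with an important twist: several of the bounds for $\cG$ depend on the Taylor parameterization $P(\theta)$ of $W^s_{\rm loc}(\tilde{x}^{(+)})$, which is only known up to the error $r_{\rm manif}$ from Theorem \ref{thm:manif_proof}. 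Thus I would first load the validated enclosure from Lemma \ref{lem:parameterization_properties} and propagate $r_{\rm manif}$ through every place $P(\theta)$ or $D_\theta P(\theta)$ appears (essentially the $Y_0$ and $Z_1$ contributions, and the $\zeta_2^{(u_j)}$ piece of $Z_2$).

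Next, with $N=500$, $\nu=1.05$, $\sigma=10$, $\rho=2.2$, and the weights $\mu=(\sigma,1,1,\sigma,1,\sigma,1)$, I would run Newton's method on the truncated map $\cG^{(N)}$ starting from a sensible initial guess (obtained by numerically integrating the ODE \eqref{EQ:4DODE} from a point in $\bar P(\bar\theta)$ toward the symmetric section $\{x: Rx=x\}$, then fitting its Chebyshev expansion). Once $\bar x = (\bar L,\bar\theta,\bar u)$ has residual below machine tolerance and the numerical inverse $A^{(N)} \approx D\cG^{(N)}(\bar x)^{-1}$ is well-conditioned, I would perform the rigorous interval-arithmetic computation of $Y_0$, $Z_0$, $Z_1$, and $Z_2(r)$ exactly as prescribed in Section \ref{sec:bvp_explicit_bounds}, assemble the radii polynomial $p(r) = Z_2(r)r^2 - (1-Z_0-Z_1)r + Y_0$ from \eqref{eq:RPBanach}, and verify that $p(r_{\rm bvp})<0$ for $r_{\rm bvp}=4.189559197816045\times 10^{-12}$. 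The stated value of $r_{\rm bvp}$ suggests that there is in fact a whole interval $(r_{\rm bvp},r^*)$ on which $p$ is negative, and choosing the left endpoint gives the quoted error bound. Theorem \ref{thm:radPolyBanach} then yields a unique $\tilde x\in B_{r_{\rm bvp}}(\bar x)$ with $\cG(\tilde x)=0$.

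The main obstacle I anticipate is not conceptual but quantitative: the $Z_1$ bound couples the truncation index $N$ of the Chebyshev series to the truncation index of the Taylor series for $P$, through terms such as $\Psi_k(\bar u_j)$ and the tail factor $1/\nu^{N+1}$, and through the propagation of $r_{\rm manif}$ into $|(z_{u_j})_0|$. To get $Z_0+Z_1<1$ with comfortable margin I may need to retune $N$, $\nu$, and the weights $\mu_\alpha$ — for instance the $\mu_L$, $\mu_{\theta_i}$ balance directly trades $Y_0$ against $Z_2$, while $\nu$ close to $1$ weakens the decay factor $\nu^{-(N+1)}$ controlling the off-diagonal couplings and $\nu$ large inflates $\norm{\bar u_j}_{\cC_\nu}$ and $\norm{\Upsilon}_{B(\cC_\nu)}=2\nu$. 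Once a consistent $(N,\nu,\mu)$ is found, the $Z_2$ bound is polynomial in $r$ and is handled by simply verifying negativity of $p$ at the candidate radius. The computation is fully automated in the file \texttt{main.jl} of the repository \cite{github}, and success of the interval arithmetic executing this protocol constitutes the proof.
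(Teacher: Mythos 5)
Your proposal is correct and follows essentially the same route as the paper: apply the Newton--Kantorovich Theorem \ref{thm:radPolyBanach} to $\cG$ with the operators and bounds of Section \ref{sec:bvp_explicit_bounds} (propagating $r_{\rm manif}$ wherever $P(\theta)$ enters), compute $Y_0,Z_0,Z_1,Z_2$ rigorously with interval arithmetic at $N=500$, $\nu=1.05$, and the stated weights, and conclude from negativity of the radii polynomial on an interval of radii with $r\le r_*$. The only cosmetic difference is that the paper reports a negativity interval $(2.258\ldots\times 10^{-12},\,10^{-5})$ and then quotes $r_{\rm bvp}=4.189\ldots\times 10^{-12}$ as an admissible radius inside it, rather than the left endpoint, but this does not affect the argument.
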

\begin{proof}
    We compute the bounds developed in section \ref{sec:bvp_explicit_bounds} with $N = 500$ and $r_* = 1 \times 10^{-5}$ to obtain the numbers found in Table~\ref{tab:bvp_bounds}.
    This gives us a range     
    \[
    r \in (2.2580382640246346 \times 10^{-12}, 1 \times 10^{-5}),
    \]
    where the Newton-Kantorovich is less than 0 and $r \le r_*$. Hence, by the Newton-Kantorovich Theorem, there exists a unique $\tilde{x} \in \cX$ such that $\norm{\tilde{x} - \bar{x}}_X \le 4.189559197816045 \times 10^{-12}$ and $\cG(\tilde{x}) = 0$.    
\end{proof}
\begin{table} [h]
\begin{center}
    \begin{tabular}{|c|c|} 
        \hline
        $Y_0$& $1.778137191602884 \times 10^{-12}$\\
        \hline
        $Z_0$& $1.5654082895539637 \times 10^{-13}$ \\
        \hline
        $Z_1$& $0.21252973518094204$ \\
        \hline
        $Z_2$& $163.0173122663094$ \\
         \hline
    \end{tabular}
    \caption{Newton-Kantorovich bounds for the projected boundary problem proof.}
    \label{tab:bvp_bounds}
\end{center}
\end{table}
\begin{lem} \label{lem:bvp_domain_of_convergence}
    Let $(\tilde{L}, \tilde{\theta}, \tilde{u}), (\bar{L}, \bar{\theta}, \bar{u})$ be as defined in Theorem~\ref{thm:bvp_proof} and 
    \[
    \Gamma_j(t) \bydef (\tilde{u}_j)_0 + 2 \sum_{k \ge 1} (\tilde{u}_j)_k T_k\left( 1 - \frac{t}{\tilde{L}}\right), \quad \bar{\Gamma }_j(t) \bydef (\bar{u}_j)_0 + 2 \sum_{k = 1}^{N} (\bar{u}_j)_k T_k\left( 1 - \frac{t}{\tilde{L}}\right).
    \]
    Then $\Gamma (t)$ solves \eqref{eq:reduced_problem} with $L$ set to $\tilde{L}$. Moreover, for $j=1,2,3,4$ and $t \in [0, 2\tilde{L}]$
    \[
    |\Gamma _j(t) -\bar{\Gamma }_j(t)| \le \frac{r_{\rm bvp}}{\mu_{u_j}}.
    \]
\end{lem}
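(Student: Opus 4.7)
The statement bundles three claims: (i) well-definedness of the Chebyshev series defining $\Gamma_j$ on $[0, 2\tilde L]$; (ii) that $\Gamma$ solves the reversor-reduced BVP \eqref{eq:reduced_problem} with $L=\tilde L$; and (iii) the uniform $C^0$ error estimate. The overall strategy is to transport each component of the coefficient-level identity $\cG(\tilde L,\tilde\theta,\tilde u)=0$ back to a function-space statement, using the rigorously validated parameterization $P$ from Lemma~\ref{lem:parameterization_properties}.

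Parts (i) and (iii) are immediate from the normed structure. Under the bijection $\tau = 1 - t/\tilde L$ between $[0, 2\tilde L]$ and $[-1,1]$, one has $|T_k(\tau)|\le 1$, so $\tilde u_j\in\cC_\nu$ (with $\nu\ge 1$) forces absolute and uniform convergence. The same pointwise bound applied to $\tilde u_j - \bar u_j$ yields
\[
|\Gamma_j(t) - \bar\Gamma_j(t)| \;\le\; |(\tilde u_j)_0-(\bar u_j)_0| + 2\sum_{k\ge 1}|(\tilde u_j)_k-(\bar u_j)_k| \;\le\; \|\tilde u_j-\bar u_j\|_{\cC_\nu},
\]
where the last inequality uses $\nu\ge 1$. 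Combining with $\|\tilde x-\bar x\|_{\cX}\le r_{\rm bvp}$ from Theorem~\ref{thm:bvp_proof} and the definition of $\|\cdot\|_{\cX}$ gives $\|\tilde u_j-\bar u_j\|_{\cC_\nu}\le r_{\rm bvp}/\mu_{u_j}$, which is (iii).

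For the boundary conditions in (ii), I unpack the relevant blocks of $\cG=0$ one at a time. The normalization $\mathcal L(\tilde\theta)=0$ forces $\tilde\theta_1^2+\tilde\theta_2^2=0.95$, so in particular $\tilde\theta\in[-1,1]^2$ and $P(\tilde\theta)$ is well defined. Evaluating $\Theta(\tilde u)=0$ via $T_k(1)=1$ yields $\Gamma_1(0)=\Gamma_3(0)$ and $\Gamma_2(0)=-\Gamma_4(0)$, which is exactly $\Gamma(0)=R\Gamma(0)$ for the reversor of Remark~\ref{eq:reversible_manifold}. Evaluating the $k=0$ component of $\mathcal U_j(\tilde L,\tilde\theta,\tilde u)=0$ via $T_k(-1)=(-1)^k$ yields $\Gamma_j(2\tilde L)=P_j(\tilde\theta)$, and Lemma~\ref{lem:parameterization_properties} places $P(\tilde\theta)\in W^s(\tilde x^{(+)})$, verifying the second boundary condition of \eqref{eq:reduced_problem}.

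The main obstacle, and most technical step, is recovering the ODE. The coefficient identities $2k(\tilde u_j)_k = -\tilde L(\Upsilon\phi_j(\tilde u))_k$ for $k\ge 1$ are, by design, the Chebyshev projection of $d\Gamma/d\tau = -\tilde L\,f(\Gamma)$: this follows from the antiderivative identity $2T_k=\frac{d}{d\tau}[T_{k+1}/(k+1)-T_{k-1}/(k-1)]$ of Proposition~\ref{prop:cheb}(2) together with the Chebyshev product formula in Proposition~\ref{prop:cheb}(3) applied to $\phi_j(\tilde u)$, which explains both the $\Upsilon$ convention and the coupling in $\phi_j$. Uniqueness of the Chebyshev expansion of a continuous function on $[-1,1]$ promotes the coefficient equality to the pointwise identity $d\Gamma/d\tau(\tau)=-\tilde L f(\Gamma(\tau))$, and the chain rule with $d\tau/dt=-1/\tilde L$ cancels the factor $-\tilde L$ to produce $\dot\Gamma(t)=f(\Gamma(t))$ on $(0,2\tilde L)$. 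This last passage is purely algebraic but requires care in tracking the $\Upsilon$ convention and the sign introduced by the time rescaling; once it is done, $\Gamma$ satisfies all three lines of \eqref{eq:reduced_problem} and the lemma follows.
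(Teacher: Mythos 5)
Your proof is correct and follows essentially the same route as the paper: absolute convergence from $\tilde u_j \in \cC_\nu$, the boundary conditions and rescaled ODE read off from $\cG(\tilde x)=0$, stable-manifold membership via Lemma~\ref{lem:parameterization_properties}, and the error bound from Theorem~\ref{thm:bvp_proof} together with the definition of $\norm{\cdot}_\cX$. The only difference is that you spell out the step the paper compresses into ``by construction of $\cG$'' (evaluating the Chebyshev polynomials at $\pm 1$ for the boundary conditions and recovering the pointwise ODE from the coefficient identities via the antiderivative relation and uniqueness of Chebyshev expansions), which is a faithful elaboration rather than a genuinely different argument.
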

\begin{proof}
    We begin by ensuring that $\Gamma (t)$ is well-defined on $[0, 2\tilde{L}]$. For $j = 1,2,3,4$, and $t \in [0, 2\tilde{L}]$, we have
    \[
    \Gamma _j(t)  = (\tilde{u}_j)_0 + 2 \sum_{k \ge 1} (\tilde{u}_j)_k T_k\left(1- \frac{t}{\tilde{L}}\right),
    \]
    and
    \[
        |(\tilde{u}_j)_0| + \sum_{k \ge 1} |2(\tilde{u}_j)_k| \left|T_k\left(1- \frac{t}{L}\right)\right| \le \sum_{k \ge 0} |(\tilde{u}_j)_k| \omega_k = \norm{\tilde{u}_j}_{\cC_\nu} \le \norm{\tilde{u}_j - \bar{u}_j}_{\cC_\nu} + \norm{\bar{u}_j}_{\cC_\nu} < \infty.
    \]
    Therefore $\Gamma (t)$ is absolutely convergent and hence converges. By construction of $\cG$, we have that $y_j(t) \bydef (\tilde{u}_j)_0 + 2 \sum_{k \ge 1} (\tilde{u}_j)_k T_k(t)$ solves
    \[
    \begin{cases}
        y(1) \in \Pi_R, \\
        y(-1) = P(\tilde{\theta}), \\
        \dot{y} = -\tilde{L}f(y), \quad t \in [-1, 1].
    \end{cases}
    \]
    Then $\Gamma (t)$ simply rescales time back to the interval $[0, 2\tilde{L}]$ and reverses it, so $\Gamma (t)$ solves 
    \[
    \begin{cases}
        \Gamma (0) \in \Pi_R, \\
        \Gamma (2\tilde{L}) = P(\tilde{\theta}), \\
        \dot{\Gamma } = f(\Gamma ), \quad t \in [0, 2\tilde{L}].
    \end{cases}
    \]
    Finally, by Theorem~\ref{lem:parameterization_properties} we have $P(\tilde{\theta}) \in W^s_\text{loc}(\tilde{x}^{(+)})$ since $\tilde{\theta} \in [-1,1]^2$. As for the inequality, for $j = 1,2,3,4$ and $t \in [0, 2\tilde{L}]$, we have
    \[
    {\small
    \begin{split}
        |\Gamma _j(t) -\bar{\Gamma }_j(t)| &= \left| (\tilde{u}_j)_0 + 2 \sum_{k \ge 1} (\tilde{u}_j)_k T_k\left( 1 - \frac{1}{\tilde{L}}\right) - (\bar{u}_j)_0 - 2 \sum_{k = 1}^{N} (\bar{u}_j)_k T_k\left( 1 - \frac{1}{\tilde{L}}\right)\right| \\
        &= \left| (\tilde{u}_j)_0 - (\bar{u}_j) + 2 \sum_{k \ge 1} ((\tilde{u}_j)_k - (\bar{u}_j)_k) T_k\left( 1 - \frac{1}{\tilde{L}}\right)\right| \\
        &\le |(\tilde{u}_j)_0 - (\bar{u}_j)_0| + 2 \sum_{k \ge 1} \left|(\tilde{u}_j)_k - (\bar{u}_j)_k\right| \left| T_k\left( 1 - \frac{1}{\tilde{L}}\right)\right| \\
        &\le \sum_{k \ge 0} \left| (\tilde{u}_j)_k - (\bar{u}_j)_k \right| \omega_k \\
        &= \norm{\tilde{u}_j - \bar{u}_j}_{\cC_\nu} \le \frac{r_{bvp}}{\mu_{u_j}},
    \end{split}    
    }
    \]
    where the last inequality follows from Theorem~\ref{thm:bvp_proof}.
\end{proof}

\begin{figure}[H] \label{fig:original_bvp}
\centering
    \includegraphics[width=.6\textwidth]{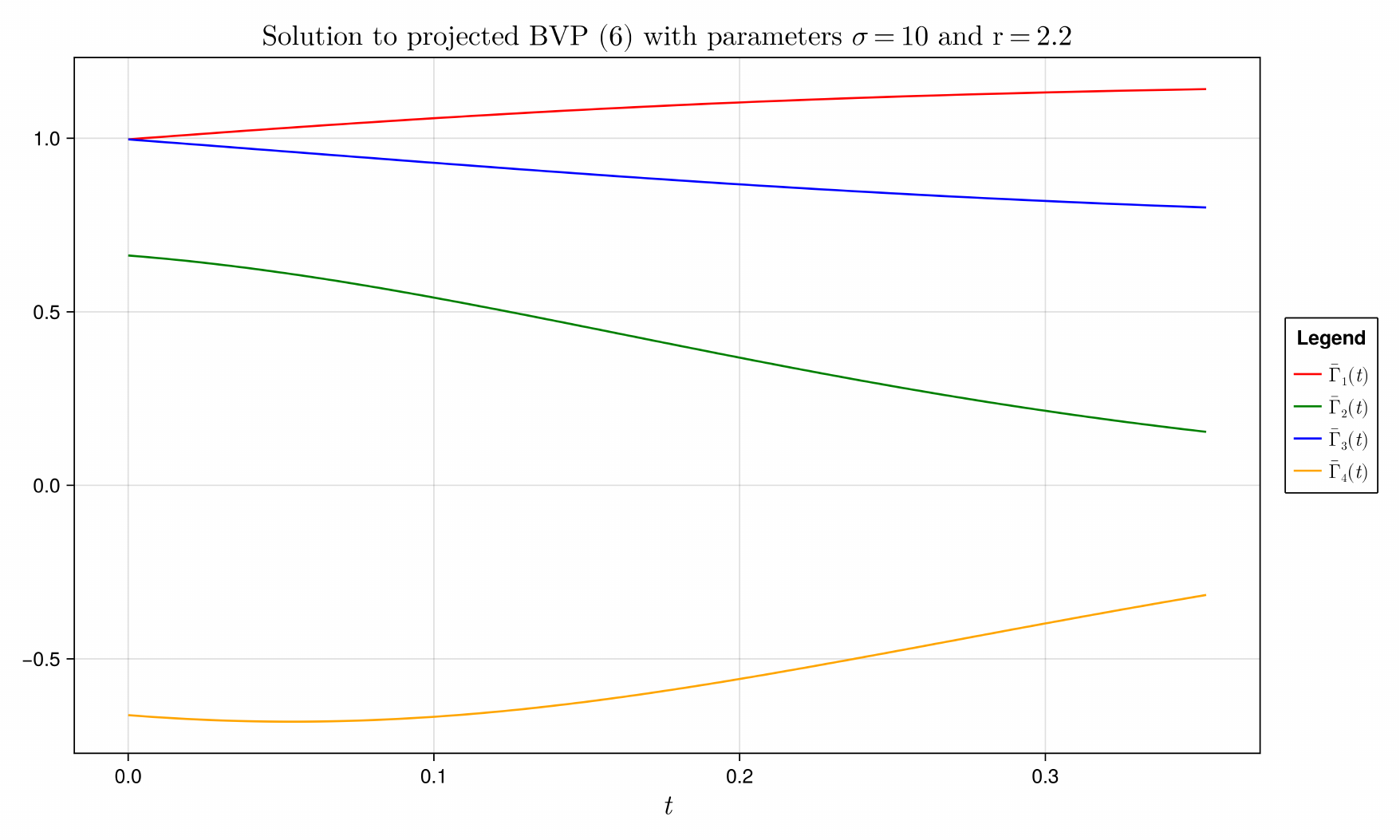}
    \vspace{-.3cm}
    \caption{Solution to the projected BVP when $\sigma=10$ and $\rho=2.2$.
    Plot of the four coordinates of $\bar{\Gamma}(t)$ for $t \in [0, 2\bar{L}]$. One notices that $\bar{\Gamma}(0) \approx R\bar{\Gamma}(0)$ as expected.}
\end{figure}

\subsection{Proof for the 2-cycle with the logisitc growth function}
\label{sec:results}
In this subsection, we present the main result of the thesis, that is, the verification of the spatially inhomogeneous two-cycle in \eqref{IDE1}. This will follow from the results obtained in Sections \ref{sec:manifold_proof} and \ref{sec:bvp proof}. To begin, let $C(\R)$ be the space of continuous functions from $\R$ to $\R$ with the supremum norm. Next, let $\bar{N}(t), \bar{M}(t) \in C(\R)$ be the following piecewise-defined functions.
\[
\bar{N}(t) \bydef \begin{cases}
    \bar{\zeta}_3(-t), & t < -2\tilde{L}, \\
    \bar{\Gamma }_3(-t), &-2\tilde{L} \le t < 0, \\
    \bar{\Gamma }_1(t),& 0 \le t \le 2\tilde{L}, \\
    \bar{\zeta}_1(t),& t > 2\tilde{L},   
\end{cases}
\quad
\bar{M}(t) \bydef \begin{cases}
    \bar{\zeta}_1(-t), & t < -2\tilde{L}, \\
    \bar{\Gamma }_1(-t), &-2\tilde{L} \le t < 0, \\
    \bar{\Gamma }_3(t),& 0 \le t \le 2\tilde{L}, \\
    \bar{\zeta}_3(t),& t > 2\tilde{L},   
\end{cases}
\]
where $\bar{\zeta}(t) \bydef \bar{P}\left(e^{\Lambda(t - 2\tilde{L})} \tilde{\theta}\right)$ and $\bar{P}, \tilde{L}, \tilde{\theta}, \bar{\Gamma }$ are defined in Theorems~\ref{thm:manif_proof} and \ref{thm:bvp_proof} and Lemmas~\ref{lem:parameterization_properties} and \ref{lem:bvp_domain_of_convergence}.
Notice that $\bar{N}(t), \bar{M}(t)$ satisfy symmetry $N(t) = M(-t)$ observed in the ordinary differential equation.
\begin{figure}[H] \label{fig:n_m}
\centering
    \includegraphics[width=.6\textwidth]{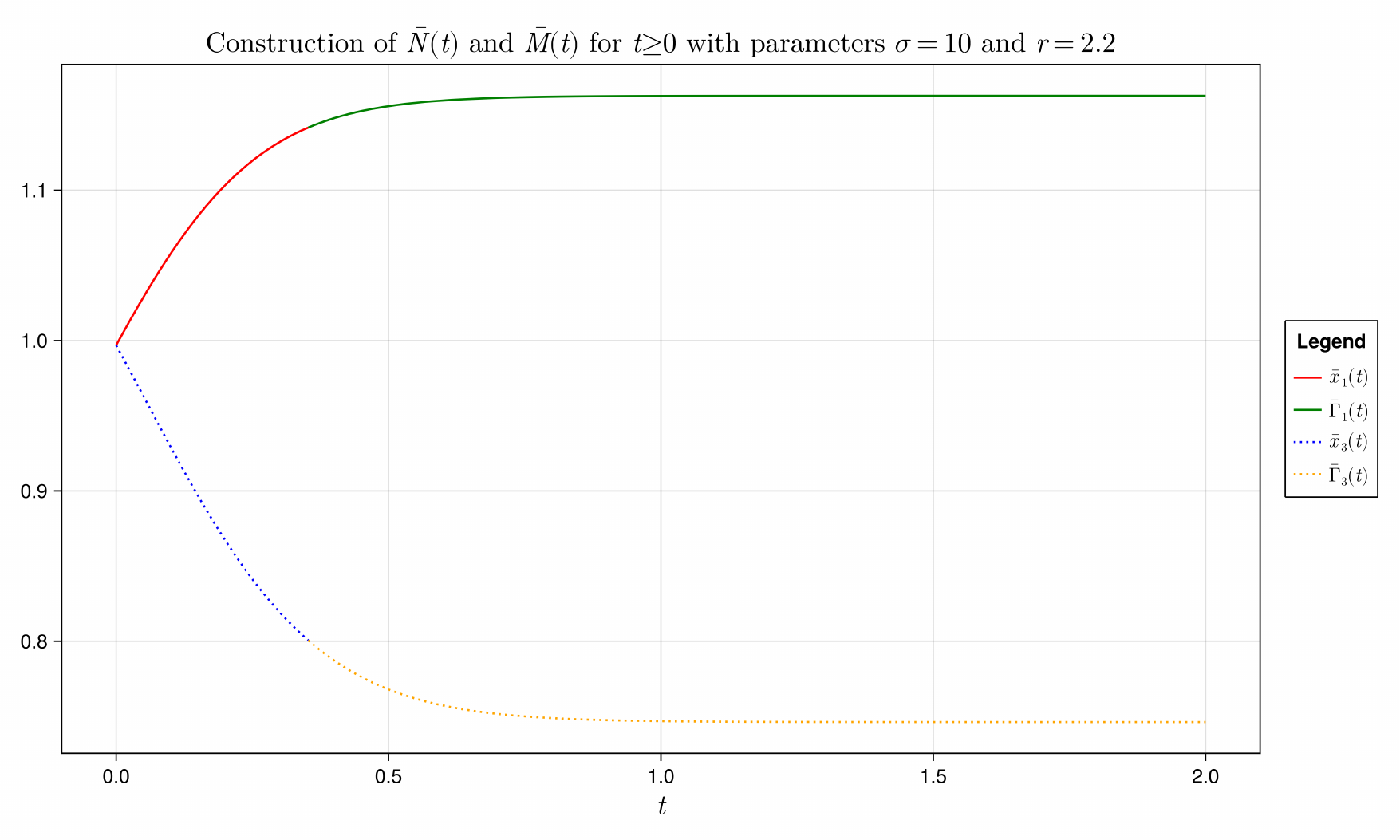}
    \vspace{-.3cm}
    \caption{Plot of $\bar{N}(t), \bar{M}(t)$ for $t \ge 0$. $\bar{N}(t)$ appears as a solid line while $\bar{M}(t)$ appears as a dotted line. The color and legend is to precisely show how these functions are constructed from their pieces. For negative values of $t$, recall the symmetry presented above.}
\end{figure}
\begin{theorem} \label{thm:two-cycle_proof}
    Let $\bar{N}(t), \bar{M}(t)$ be as above. There exist functions $N(t), M(t)$ that form a spatially inhomogeneous two-cycle of \eqref{IDE1} with parameter values $\sigma = 10$ and $\rho = 2.2$. Moreover,
    \[
    \begin{split}
        \norm{N - \bar{N}}_{C(\R)}, \norm{M - \bar{M}}_{C(\R)} &\le \max\left\{\frac{r_{\rm bvp}}{\mu_{u_1}}, r_{\rm manif} \right\} \\
        &= r_{\rm uniform} \bydef 4.189559197816045 \times 10^{-13}.
    \end{split}
    \]
\end{theorem}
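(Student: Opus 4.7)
The strategy is to define $N(t)$ and $M(t)$ exactly as in the definitions of $\bar N(t)$ and $\bar M(t)$, but using the true (validated) objects instead of their finite-dimensional approximations. Concretely, let $\zeta(t) \bydef P(e^{\Lambda(t-2\tilde L)}\tilde\theta)$ where $P$ is the analytic parameterization of $W^s_{\text{loc}}(\tilde x^{(+)})$ from Lemma \ref{lem:parameterization_properties}, and let $\Gamma_j(t)$ be the full Chebyshev series on $[0,2\tilde L]$ from Lemma \ref{lem:bvp_domain_of_convergence}. Define $N$ and $M$ piecewise exactly as in the definitions of $\bar N, \bar M$, substituting $(\zeta,\Gamma)$ for $(\bar\zeta,\bar\Gamma)$.

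The verification has two parts: first, establishing that $(N,M)$ is a 2-cycle, and second, quantifying the error. For the first part, on $[0,2\tilde L]$ the function $\Gamma$ solves the reduced BVP \eqref{eq:reduced_problem} (Lemma \ref{lem:bvp_domain_of_convergence}), and hence $(\Gamma_1,\Gamma_2,\Gamma_3,\Gamma_4)$ satisfies \eqref{EQ:4DODE}. On $[2\tilde L,\infty)$ the curve $\zeta$ is an analytic orbit on the stable manifold (Lemma \ref{lem:parameterization_properties}) with $\lim_{t\to\infty}\zeta(t)=\tilde x^{(+)}$. Since the BVP enforces $\Gamma(2\tilde L)=P(\tilde\theta)=\zeta(2\tilde L)$, the standard uniqueness theorem for analytic IVPs welds these two pieces into one $C^\infty$ trajectory on $[0,\infty)$. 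The reversor constraint $\Gamma(0)=R\Gamma(0)$, also built into $\cG$, combined with time-reversibility (Remark \ref{eq:reversible_manifold}), extends the trajectory to all of $\R$ via $t\mapsto R\Gamma(-t)$, producing a solution converging to $R\tilde x^{(+)}=\tilde x^{(-)}$ as $t\to-\infty$. The hypotheses of Theorem \ref{THM:problem_equivalence} are then satisfied, and Theorem \ref{THM:ODE_equivalence} immediately yields that $(N,M)$ is a 2-cycle of \eqref{IDE1}; spatial inhomogeneity is clear since $n_-\ne n_+$.

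For the uniform error estimate, I split $\R$ into three regions and apply the already-proven lemmas componentwise. On $[0,2\tilde L]$, Lemma \ref{lem:bvp_domain_of_convergence} gives $|\Gamma_j(t)-\bar\Gamma_j(t)|\le r_{\rm bvp}/\mu_{u_j}$; since $\mu_{u_1}=\mu_{u_3}=\sigma=10$, the contribution to the first and third components is $r_{\rm bvp}/\sigma$. On $t>2\tilde L$, the argument $e^{\Lambda(t-2\tilde L)}\tilde\theta$ lies in $[-1,1]^2$ because $e^{\Lambda s}$ is a contraction on the stable subspace for $s\ge 0$ and $\tilde\theta\in[-1,1]^2$ by the normalization $\tilde\theta_1^2+\tilde\theta_2^2=0.95$; Lemma \ref{lem:parameterization_properties} then yields $|\zeta_j(t)-\bar\zeta_j(t)|\le r_{\rm manif}$. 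The symmetric extension gives identical bounds for $t<0$. Taking the componentwise maximum and noting numerically that $r_{\rm bvp}/\mu_{u_1}>r_{\rm manif}$ yields the stated constant $r_{\rm uniform}$.

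The main obstacle is the gluing step at $t=2\tilde L$. A priori, $\Gamma$ is a Chebyshev series defined on a bounded interval and $\zeta$ is the image under $P$ of a trajectory of the linearized flow, so matching them into a single globally-defined $C^\infty$ solution of the ODE requires that their values agree at the seam and that the common continuation is unambiguous. The first is encoded directly in the boundary condition $x(-1)=P(\theta)$ of the BVP used to build $\cG$, and the second follows from analyticity of $f$ and uniqueness of the resulting initial-value problem; the remaining seam at $t=0$ is handled by the reversor symmetry in an entirely analogous way.
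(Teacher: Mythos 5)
Your proposal is correct and follows essentially the same route as the paper: build the true solution piecewise from the validated BVP solution $\Gamma$ on $[0,2\tilde L]$ and the validated stable-manifold orbit $\zeta$ on $[2\tilde L,\infty)$, glue at the seam via $\Gamma(2\tilde L)=P(\tilde\theta)=\zeta(2\tilde L)$, extend by the reversor, invoke Theorems \ref{THM:problem_equivalence} and \ref{THM:ODE_equivalence}, and then obtain the uniform bound region-by-region from Lemmas \ref{lem:bvp_domain_of_convergence} and \ref{lem:parameterization_properties} with $\mu_{u_1}=\mu_{u_3}=\sigma$. Your extra explicitness about uniqueness of the continuation at the seam only spells out what the paper leaves implicit, so no substantive difference.
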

\begin{proof}
    By Theorems~\ref{thm:manif_proof},\ref{thm:bvp_proof} and Lemmas~\ref{lem:parameterization_properties},\ref{lem:bvp_domain_of_convergence}, we have that
    \[
    z(t) \bydef \begin{cases}
        \Gamma (t), & 0 \le t \le 2\tilde{L},\\
        \zeta(t), &t > 2\tilde{L},
    \end{cases}
    \]
    is a solution to \eqref{EQ:4DODE} that satisfies \eqref{eq:reduced_problem} since $\Gamma(2\tilde{L}) = P(\tilde{\theta}) = \zeta(2\tilde{L})$. In other words,
    \[
    \begin{cases}
        z(0) = \Gamma (0), \\
        \lim_{t \to \infty} z(t) = \tilde{x}^{(+)}, \\
        \dot{z} = f(z), &t \ge 0.
    \end{cases}
    \]
    By the definition of the reversor $R$ we extend $z(t)$ for negative values of time as $Rz(-t)$ is also a valid solution to $\dot{z} = f(z)$. This extension of $z(t)$, which we also denote $z(t)$, is continuous as $z(0) \in \Pi_R$ and solves
    \[
    \begin{cases}
        \lim_{t \to \pm \infty} z(t) = \tilde{x}^{(\pm)}, \\
        \dot{z} = f(z), & t \in \R.
    \end{cases}
    \]    
    Now let $N(t) = z_1(t)$ and $M(t) = z_3(t)$, more precisely,
    \[
    N(t) \bydef \begin{cases}
        \zeta_3(-t), & t < -2\tilde{L}, \\
        \Gamma_3(-t), &-2\tilde{L} \le t < 0, \\
        \Gamma_1(t),& 0 \le t \le 2\tilde{L}, \\
        \zeta_1(t),& t > 2\tilde{L},   
    \end{cases}
    \quad
    M(t) \bydef \begin{cases}
        \zeta_1(-t), & t < -2\tilde{L}, \\
        \Gamma_1(-t), &-2\tilde{L} \le t < 0, \\
        \Gamma_3(t),& 0 \le t \le 2\tilde{L}, \\
        \zeta_3(t),& t > 2\tilde{L}. 
    \end{cases}
    \]
    By Theorem~\ref{THM:ODE_equivalence}, we have that $N,M$ define a two-cycle of \eqref{IDE1}. Finally we have that
    \[
    \norm{\bar{N}-N}_{C(\R)} = \max\left\{\sup_{0 \le t \le 2\tilde{L}}|\bar{N}(t)-N(t)|, \sup_{t > 2\tilde{L}}|\bar{N}(t)-N(t)|\right\},
    \]
    by symmetry. For $0 < t \le 2\tilde{L}$, we have
    \[
    |\bar{N}(t)-N(t)| = |\Gamma (t) - \bar{\Gamma }(t)| \le \frac{r_\text{bvp}}{\mu_{u_1}},
    \]
    by Theorem~\ref{lem:bvp_domain_of_convergence}. Now for $t > 2\tilde{L}$, letting $\vartheta_i \bydef \pi_ie^{\Lambda(t - 2L)} \tilde{\theta}$ and using multi-index notation
    \[
    |N(t) - \bar{N}(t)| = |\zeta_1(t) - \bar{\zeta}_1(t)| = |P_1(\vartheta) - \bar{P}_1(\vartheta)| \le r_{\rm manif},
    \]
    by Lemma~\ref{lem:parameterization_properties} since $|\tilde{\theta}_i| < 1$ and the matrix $\Lambda$ is diagonal with negative values. The same can be done to obtain these bounds for $\norm{M - \bar{M}}_{C(\R)}$.    
\end{proof}
\begin{figure}[H]
    \centering
    \includegraphics[width=1\textwidth]{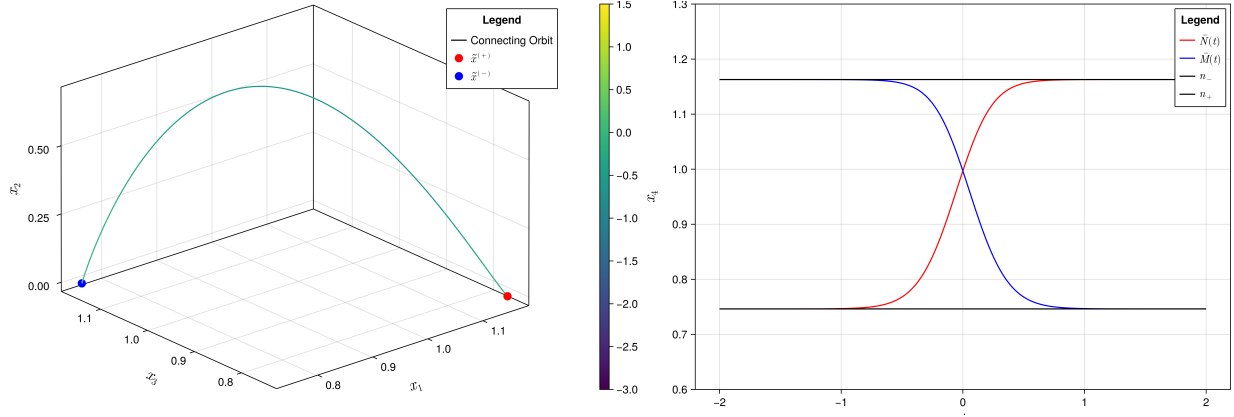}
    \caption{Left: Plot of approximate stable and unstable manifolds along with the connection between the two. In red, we have the two fixed points of interest. The colored sheets are the manifolds and the colored line is the connection between them. The four-dimensional objects are projected to $\R^3$ and the fourth dimension is represented by color. Right: The corresponding spatially inhomogeneous two-cycle of \eqref{IDE1}. In red, we have $\bar{N}(t)=N_0(t)$ and in blue $\bar{M}(t)=N_1(t)$. In black, horizontal lines corresponding to $n_\pm$.}
\end{figure}

\section{Spectral stability of 2-cycles}\label{sec:stability}
In this section, we will assume without loss of generality that $\sigma=1$. Recall that the parameter $\sigma$ can be scaled out of \eqref{IDE1} by a reparameterization of space. We provide a proof of Theorem \ref{thm-stability} and provide some theoretical background on stabilty using the Evans function, before moving on to numerical results.
\subsection{Eigenvalues of the linearized second-iterate map}
\begin{lem}\label{lem-S-frechet}
The second-iterate map $S=Q\circ Q$ is Fr\'echet differentiable, with
\begin{align}\label{S-frechet-derivative}
DS[N]h(x) = \int_\R K(x-y)F'(QN(y))\int_\R K(y-z)F'(N(z))h(z)dzdy,
\end{align}
\end{lem}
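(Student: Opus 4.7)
The plan is to reduce the claim to the chain rule applied to the single-step operator $Q$. First I would establish that $Q:BC(\mathbb{R})\to BC(\mathbb{R})$ is Fr\'echet differentiable at every bounded continuous $N$, with candidate derivative
\[
(DQ[N]h)(x)=\int_\mathbb{R} K(x-y)F'(N(y))h(y)\,dy.
\]
The computation is a direct Taylor expansion: since $F$ is $C^2$ by the standing assumption, for each $y$ one has
\[
F(N(y)+h(y))-F(N(y))-F'(N(y))h(y)=\tfrac12 F''(\xi(y))h(y)^2,
\]
with $\xi(y)$ between $N(y)$ and $N(y)+h(y)$. If $\|N\|_\infty\le M$ and $\|h\|_\infty\le 1$, then $|\xi(y)|\le M+1$ uniformly, so $C_{M}\bydef\sup_{|u|\le M+1}|F''(u)|<\infty$. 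Since $K\ge 0$ and $\|K\|_{L^1}=1$, I obtain
\[
\bigl|Q[N+h](x)-Q[N](x)-(DQ[N]h)(x)\bigr|\le \tfrac{C_M}{2}\int_\mathbb{R} K(x-y)h(y)^2\,dy\le \tfrac{C_M}{2}\|h\|_\infty^2,
\]
uniformly in $x$, which gives the required $o(\|h\|_\infty)$ bound. Continuity of the remainder and of $DQ[N]h$ follow from dominated convergence together with continuity of $K$ in the convolution sense, so the output lies in $BC(\mathbb{R})$. The bounded linear operator $DQ[N]$ has operator norm at most $\sup_{|u|\le M}|F'(u)|$, again using $\|K\|_{L^1}=1$.

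Next, I would apply the chain rule. Writing $v\bydef Q[N+h]-Q[N]$, the differentiability of $Q$ at $N$ yields $v=DQ[N]h+r_1(h)$ with $\|r_1(h)\|_\infty=o(\|h\|_\infty)$, and differentiability of $Q$ at $Q[N]$ yields
\[
S[N+h]-S[N]=Q[Q[N]+v]-Q[Q[N]]=DQ[Q[N]]v+r_2(v),
\]
with $\|r_2(v)\|_\infty=o(\|v\|_\infty)$. Combining, and using boundedness of $DQ[Q[N]]$ to absorb $DQ[Q[N]]r_1(h)$ into the remainder, I conclude
\[
S[N+h]-S[N]=DQ[Q[N]]\circ DQ[N]\,h+o(\|h\|_\infty),
\]
so $S$ is Fr\'echet differentiable at $N$ with $DS[N]=DQ[Q[N]]\circ DQ[N]$.

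Finally, unpacking the composition pointwise gives the stated formula:
\[
(DS[N]h)(x)=\int_\mathbb{R} K(x-y)F'(Q[N](y))\,(DQ[N]h)(y)\,dy=\int_\mathbb{R}\!\!K(x-y)F'(QN(y))\!\int_\mathbb{R}\!\!K(y-z)F'(N(z))h(z)\,dz\,dy,
\]
which is precisely \eqref{S-frechet-derivative}. The only mild subtlety is choosing the underlying Banach space: $C(\mathbb{R})$ with the sup norm is not complete in general, so strictly speaking I would work on $BC(\mathbb{R})$ (or on a bounded neighborhood of $N$ in $C(\mathbb{R})\cap L^\infty(\mathbb{R})$), which is harmless here since the 2-cycle components $N_0,N_1$ are bounded. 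Beyond this minor bookkeeping, no essential obstacle arises — once differentiability of $Q$ is in hand, the lemma is just the chain rule, and the Taylor estimate together with $\|K\|_{L^1}=1$ is what drives everything.
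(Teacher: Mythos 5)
Your proposal is correct. For this lemma the paper offers no proof at all — it is stated as a routine fact — so your write-up supplies precisely the standard argument the authors implicitly rely on: Fréchet differentiability of the one-step map $Q$ via the second-order Taylor remainder of $F$ (uniform on the relevant bounded set because $F$ is $C^2$), the bound $\frac{C_M}{2}\|h\|_\infty^2$ coming from $K\ge 0$ and $\|K\|_{L^1}=1$, and then the chain rule for $S=Q\circ Q$ followed by unpacking the composition into the double integral \eqref{S-frechet-derivative}. The only step you leave implicit is that $\|v\|_\infty=\|Q[N+h]-Q[N]\|_\infty=O(\|h\|_\infty)$ (from boundedness of $DQ[N]$ plus $r_1(h)=o(\|h\|_\infty)$), which is what turns $r_2(v)=o(\|v\|_\infty)$ into $o(\|h\|_\infty)$; this is the usual chain-rule bookkeeping and worth one line. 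Your remark about working on $BC(\R)$ with the sup norm rather than all of $C(\R)$ is also a sensible clarification of the functional setting the paper uses without comment, and it is harmless here since the 2-cycle and the perturbations of interest are bounded.
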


\begin{lem}\label{lem-stability-ode-1}
Let $N_t$ be a 2-cycle. Define $\mathcal{M}=DS[N_0]$. If $\mathcal{M}h=\lambda h$ for some $h\in C(\R)$, then $h$ is twice-differentiable. Defining $w(x)=\int_\R K(x-y)F'(N_0(y))h(y)dy$, the pair $(h,w)$ satisfies the following system of ordinary differential equations.
\begin{align}
\label{stability-ode-1-1}\lambda\ddot h&=\lambda h - F'(N_1(x))w\\
\label{stability-ode-1-2}\ddot w&=w-F'(N_0(x))h
\end{align}
\end{lem}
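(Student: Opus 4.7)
The plan is to reduce the lemma to two applications of the Laplace-kernel convolution identity already established inside the proof of Theorem \ref{THM:ODE_equivalence}: for $\sigma=1$, if $v(x)=\int_\R K(x-y)g(y)dy$ with $g$ bounded and continuous, then $v\in C^2(\R)$ and $\ddot v = v - g$.

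First I would substitute the formula \eqref{S-frechet-derivative} into the eigenvalue equation $\mathcal{M}h=\lambda h$, and use the 2-cycle property $Q[N_0]=N_1$ to rewrite it in the two-step form
\[
\lambda h(x) = \int_\R K(x-y)F'(N_1(y))w(y)dy, \qquad w(x) = \int_\R K(x-y)F'(N_0(y))h(y)dy,
\]
where $w$ is precisely the function defined in the statement. Because $h\in C(\R)$ is bounded (the natural ambient space for $Q$) and $F'\circ N_0$, $F'\circ N_1$ are bounded continuous on $\R$, both integrands are bounded and continuous, so the convolutions are well-defined and each produces a bounded continuous function.

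Next I would apply the convolution identity to the definition of $w$, with $g=F'(N_0)h$, which gives immediately $\ddot w = w - F'(N_0)h$, establishing \eqref{stability-ode-1-2}. Since $w$ is itself a Laplace convolution of a bounded continuous function, it is bounded and continuous, so $F'(N_1)w$ is again bounded and continuous, and I may apply the identity a second time to $\lambda h = K\ast[F'(N_1)w]$. This yields $\lambda\ddot h = \lambda h - F'(N_1)w$, which is \eqref{stability-ode-1-1}. When $\lambda\neq 0$, the equality $\lambda h = K\ast[F'(N_1)w]$ simultaneously transfers the $C^2$ regularity of the right-hand side to $h$, giving the claimed twice-differentiability. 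The case $\lambda=0$ is irrelevant to the subsequent stability analysis, which restricts attention to $|\lambda|\geq 1$.

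There is no substantive obstacle here: the computational work—differentiating a Laplace convolution by splitting the integration domain at $x$, applying Leibniz's rule, and integrating by parts—was already carried out in Theorem \ref{THM:ODE_equivalence}, and the present lemma just re-uses that calculation with different, but equally well-behaved, integrands. The only care needed is to verify that boundedness and continuity propagate through the two successive convolutions, which is immediate from $K\in L^1(\R)$ together with the continuity and boundedness of $h$, $N_0$, $N_1$, and $F'$ on the relevant range.
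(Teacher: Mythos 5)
Your argument is correct and is exactly the route the paper intends: the paper gives no separate proof of this lemma, treating it as an immediate consequence of the Fr\'echet-derivative formula in Lemma \ref{lem-S-frechet} (with $Q[N_0]=N_1$) together with the Laplace-kernel identity $\ddot v = v-g$ for $v=K*g$ already established by the Leibniz/integration-by-parts computation in Theorem \ref{THM:ODE_equivalence}, which is precisely what you reuse for the two successive convolutions defining $w$ and $\lambda h$. The only (harmless) caveat is that your twice-differentiability conclusion for $h$ needs $\lambda\neq 0$, which, as you note, is the only case relevant to the subsequent stability analysis where $|\lambda|\geq 1$.
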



\begin{lem}\label{lem-stability-ode-2}
Let $\lambda\in\C$ with $|\lambda|\geq 1$, and $a,b\in\R$. If $0<ab<1$. No eigenvalue of
\begin{align}
\label{stability-ode-2-1}
\lambda\ddot h&=\lambda h -aw\\
\label{stability-ode-2-2}
\ddot w&=w-bh
\end{align}
is imaginary. More generally, this holds so long as $0<ab<|\lambda|$. The eigenvalues $\mu$ satisfy $\mu^2\in 1\pm\sqrt{ab/\lambda}$.
\end{lem}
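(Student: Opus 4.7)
}
The system \eqref{stability-ode-2-1}--\eqref{stability-ode-2-2} has constant coefficients, so its ``eigenvalues'' are precisely the characteristic exponents $\mu$ for which there exists a nonzero solution of the form $(h(x),w(x))=(h_0,w_0)e^{\mu x}$. The first step will be to substitute this ansatz into \eqref{stability-ode-2-1}--\eqref{stability-ode-2-2} to obtain the algebraic system
\begin{align*}
\lambda(\mu^2-1)h_0 &= -a\,w_0,\\
(\mu^2-1)w_0 &= -b\,h_0.
\end{align*}
Since $ab\neq 0$, the second relation forces $w_0=0\iff h_0=0$, so any nontrivial solution has both $h_0$ and $w_0$ nonzero. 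Multiplying the two equations then yields $(\mu^2-1)^2=ab/\lambda$, which after taking a square root gives the claimed characterization $\mu^2\in 1\pm\sqrt{ab/\lambda}$ (either branch of the complex square root).

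With this characterization in hand, the plan is to rule out imaginary $\mu$ directly. Assume towards a contradiction that $\mu=i\beta$ for some $\beta\in\R$. Then $\mu^2=-\beta^2\leq 0$, so $-\beta^2=1\pm\sqrt{ab/\lambda}$ for some choice of branch. Rearranging, this forces $\sqrt{ab/\lambda}$ itself to be real, with
\[
\left|\sqrt{ab/\lambda}\right| = 1+\beta^2 \geq 1.
\]
On the other hand, squaring the modulus gives $|ab/\lambda|=ab/|\lambda|$ (using $ab>0$), so that $\bigl|\sqrt{ab/\lambda}\bigr|^2=ab/|\lambda|$. Combining, one obtains $ab/|\lambda|\geq 1$, i.e.\ $|\lambda|\leq ab$, which directly contradicts the hypothesis $0<ab<|\lambda|$ (in particular $ab<1\leq|\lambda|$ in the special case stated first). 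Hence no imaginary characteristic exponent exists.

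I expect no real obstacle here: the result is a one-page computation in complex algebra, and the only subtlety is being careful with the branch of $\sqrt{ab/\lambda}$, which is handled by taking absolute values so that the argument is branch-independent. I would present the argument in precisely the two steps above --- reduction to $(\mu^2-1)^2=ab/\lambda$ via the exponential ansatz, then a modulus estimate on $\sqrt{ab/\lambda}$ that is incompatible with $\mu$ being imaginary under the hypothesis $ab<|\lambda|$.
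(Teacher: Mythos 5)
Your proposal is correct and follows essentially the same route as the paper: you arrive at the same characteristic relation $\mu^2\in 1\pm\sqrt{ab/\lambda}$ (via the exponential ansatz on the second-order system rather than the paper's first-order $4\times 4$ reduction, an immaterial difference) and then derive the same contradiction, $ab\geq|\lambda|$, from the modulus of $\sqrt{ab/\lambda}$. The only nitpick is that ruling out $h_0=0$, $w_0\neq 0$ uses the first equation (since $a\neq 0$), not the second relation alone, but this does not affect the argument.
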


\begin{proof}
Defining $v=\dot h$ and $u=\dot w$, we have that $X=(h,v,w,u)$ satisfies
\begin{align*}
\dot X=\left(\begin{array}{cccc}
0&1&0&0\\
1&0&\frac{-a}{\lambda}&0\\
0&0&0&1\\
- b&0&1&0\end{array}\right)X.
\end{align*}
The characteristic polynomial is $\mu^4 - 2\mu^2 + (1-ab/\lambda)$, and the eigenvalues therefore satisfy  $\mu^2\in 1\pm\sqrt{ab/\lambda}.$ Suppose $\mu=i\gamma$ is imaginary. Then $-\gamma^2\in 1\pm\sqrt{ab/\lambda}$, which implies $|ab/\lambda|>1$, contradicting the inequality assertions on $|\lambda|$ and $ab$ from the statement of the lemma.
\end{proof}

%

\begin{proof}[Proof of Theorem \ref{thm-stability}]
We first majorize $\mathcal{M}h(x)$.
\begin{align*}
|Mh(x)|&\leq \int_\R K(x-y)|F'(N_1(y)|\left(\int_\R K(y-z)|F'(N_0(z))|\cdot|h(z)|dz\right)dy\\
&\leq \int_\R K(x-y)|F'(N_1(y)|\left(\int_\R K(y-z)dz\right)||F'\circ N_0||_\infty\cdot ||h||_\infty\\
&\leq \left(\int_\R K(u)du\right)^2||F'\circ N_1||_\infty||F'\circ N_0||_\infty||h||_\infty\\
&=||F'\circ N_1||_\infty||F'\circ N_0||_\infty||h||_\infty,
\end{align*}
It follows that $||\mathcal{M}||_\infty\leq \mu$, so the eigenvalues are contained in the disc of radius $\mu$, as claimed. The remaining parts follow directly from Lemma \ref{lem-stability-ode-1} and Lemma \ref{lem-stability-ode-2}.
\end{proof}

\subsection{Spectral stability}
Our objective in this section is to provide a means of studying what we refer to as \emph{spectral stability} of 2-cycles.
\begin{dfn}
The 2-cycle $N_t$ is \emph{spectrally stable} if $DS[N_0]$ has a single eigenvalue with multiplicity 1 on the unit circle, and all other eigenvalues have absolute value less than unity.
\end{dfn}
\noindent
$DS[N_0]$ always has at least one eigenvalue on the unit circle, corresponding to spatial translation; one can verify that $DS[N_0]\dot N_0=\dot N_0$. Note that spectral stability does not necessarily imply (local) nonlinear stability of the 2-cycle under the dynamics of \eqref{IDE1}; this line of questioning is beyond the scope of our current work. However, necessarily, spectral stability is a requirement for nonlinear stability.

Using the usual transformation of \eqref{stability-ode-thm-1-1}--\eqref{stability-ode-thm-1-2} to a first-order system of ordinary differential equations, Theorem \ref{thm-stability} guarantees that the eigenvalues $\lambda$ of $\mathcal{M}=DS[N_0]$ are associated with bounded solutions of 
\begin{align}\label{evans-ode-z}
\dot z(t)&=A(t;\lambda)z(t),\hspace{10mm}A(t;\lambda)=\left(\begin{array}{cccc}0&1&0&0\\ 1&0&-F'(N_1(t))\lambda^{-1}&0\\ 0&0&0&1 \\ -F'(N_0(t)) &0&1&0 \end{array}\right)
\end{align}
Let us catalogue some properties of this linear system.
\begin{enumerate}
\item Since each of $N_0(t)$ and $N_1(t)$ are dominated by the dynamics on stable/unstable manifolds as $|t|\rightarrow\infty$, the convergence $\lim_{t\rightarrow\pm\infty} A(t;\lambda)\bydef A_{\pm\infty}(\lambda)$ is exponential. Moreover, this convergence is clearly uniformly exponential in compact subsets of $\lambda$ bounded away from zero.
\item The limiting matrices $A_{\pm\infty}(\lambda)$ are analytic on the punctured complex plane.
\item If $0<F'(n_+)F'(n_-)<|\lambda|$, then each of $A_{\pm}(\lambda)$ are hyperbolic, with two unstable and two stable eigenvalues; this follows from Lemma \ref{lem-stability-ode-2}.
\item If the computer-assisted proof of the connecting orbit is successful, we claim that $t\mapsto A(t;\lambda)$ is analytic on the real line. More strongly, it is analytic on a strip of positive width in the complex plane, containing the real line. This can be proven with a careful application of the identity theorem of complex analysis. As a rigorous proof of stability is not the focus of this work, we will not provide the details at this time. Instead, we will from this point on assume analyticity of $t\mapsto A(t,\lambda)$.
\item A bounded solution of \eqref{evans-ode-z} must necessarily be tangent to the unstable subspace of $A_{-\infty}(\lambda)$ as $t\rightarrow -\infty$, and be tangent to the stable subspace of $A_{+\infty}(\lambda)$ as $t\rightarrow +\infty$.
\end{enumerate}
As consequence of the above observations (and assumptions on analyticity), we are in a position to define an Evans function (c.f.\ \cite{JonesGardnerAlexander,BarkerHumpherys} ), whose zeroes will coincide precisely (counting multiplicity) with eigenvalues  $\lambda$ of $M$ outside of the disc of radius $F'(n_+)F'(n_-)$. To investigate spectral stability of the 2-cycle, we recall from Theorem \ref{thm-stability} that we have ruled out eigenvalues with absolute value greater than $\mu=||F'\circ N_0||_\infty||F'\circ N_1||_\infty$. Taking this into account together with complex conjugacy, to establish spectral stability, it is enough to count the eigenvalues of $M$ in 
\begin{align}\label{stablity-Lambda-set}
\Lambda_{\epsilon_1,\epsilon_2}&=\left\{ z\in\C : -\epsilon_1\leq\mbox{Arg}(z)\leq \pi+\epsilon_1,\hspace{1mm}1-\epsilon_2\leq |z|\leq \mu+\epsilon_2\right \}
\end{align}
for some suitably-chosen $0<\epsilon_1<\pi$ and $0<\epsilon_2<1-F'(n_+)F'(n_-)$; see Figure \ref{fig:Lambda-contour}. For $\lambda\in\Lambda_{\epsilon_1,\epsilon_2}$, as consequence of point 3 above, the eigenvalues of $A_{\pm\infty}(\lambda)$ are bounded away from the imaginary axis, and the stable/unstables subspaces are each two-dimensional. 

\begin{figure}
\centering
\includegraphics[scale=0.6]{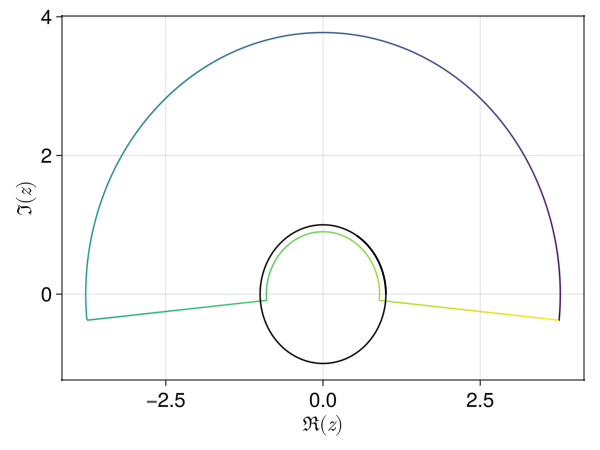}
\caption{Plot of the boundary of $\Lambda_{\epsilon_1,\epsilon_2}$ as parameterized by $\gamma:[0,1]\rightarrow\Lambda_{\epsilon_1,\epsilon_2}$ in the positive orientation (i.e.\ the interior of $\Lambda_{\epsilon_1,\epsilon_2}$ is to the ``left'' of its boundary), starting from the right endpoint of the larger circle; colour of the curve at each point corresponds to $t\in[0,1]$. Unit circle (black) for scale. Parameters $\epsilon_1=\epsilon_2=0.1$ and $\mu=3.6731$ are used here, which are the parameters used for the logistic 2-cycle calculations; see Section \ref{sec:logistic-evans}.}\label{fig:Lambda-contour}
\end{figure}

We follow the construction of the Evans function provided in \cite{BarkerHumpherys}. While that work is focused on stability of traveling waves, fundamentally, the identification with the multiplicity of a zero of the Evans function and the dimension of the space of bounded solutions of \eqref{evans-ode-z} is a consequence of exponential dichotomies, which is agnostic to the evolution equation from which the system $\dot z(t)=A(t;\lambda)z(t)$ is derived. See \cite{JonesGardnerAlexander} for details. It therefore applies to our situation. To our knowledge, this may be a first application of the Evans function to the stability of solutions of integrodifference equations.

In the following sections, for brevity, we will say that $\epsilon_1$ and $\epsilon_2$ satisfy the \emph{requisite inequalities} if $0<\epsilon_1<\pi$ and $0<\epsilon_2<1-F'(n_+)F'(n_-)$.

\subsubsection{Analytic basis construction at $\pm\infty$}
The first step is to construct two analytic bases, as suggested by point 5 above, characterizing the behaviour of bounded solutions at $\pm\infty$: 
\begin{itemize}
\item $V_-(\lambda)$: a basis for the unstable subspace of $A_{-\infty}(\lambda)$.
\item $V_+(\lambda)$: a basis for the stable subspace of $A_{+\infty}(\lambda)$.
\end{itemize}
To avoid confusion between the eigenvalues of the operator $M=DS[N_0]$ and the eigenvalues of the matrices $A_{\pm\infty}(\lambda)$, we will refer to the latter as \emph{growth modes} from this point onward. The \emph{stable growth modes} are those with negative real part, and the \emph{unstable growth modes} are those with positive real part. 

Theorem \ref{thm-stability} provides an explicit formula for the growth modes, and these will be needed in the basis calculations. Denoting the unstable growth modes of $A_{-\infty}(\lambda)$ by $\xi_{-\infty,k}(\lambda)$ for $k=0,1$, and the stable growth modes of $A_{+\infty}(\lambda)$ by $\xi_{+\infty,k}(\lambda)$, we can compactly write
\begin{align*}
\xi_{\pm\infty,k}(\lambda)=\mp\sqrt{1+(-1)^k\sqrt{F'(n_+)F'(n_-)/\lambda}}.
\end{align*}
However, to ensure that the growth modes are represented as a functions that are analytic on $\Lambda_{\epsilon_1,\epsilon_2}$, we must be careful to specify the branches of square roots. 

\begin{lem}\label{lem-analytic-growth}
Let $\Log_{\pi/2}$ be a complex logarithm that is analytic off the positive imaginary axis, and let $\Log$ be the principal logarithm. Define $\Sqrt_{\pi/2}(z)=\exp\left(\frac{1}{2}\Log_{\pi/2}(z)\right)$ and $\Sqrt(z)=\exp\left(\frac{1}{2}\Log(z)\right)$. Suppose $0<F'(n_+)F'(n_-)<1$ and $\epsilon_1,\epsilon_2$ satisfy the requisite inequalities. The function
\begin{align}\label{analytic-growth-modes}\tilde\xi_{\pm\infty,k}(\lambda)=\mp \Sqrt\left(1+(-1)^k \Sqrt_{\frac{\pi}{2}}\left(F'(n_+)F'(n_-)/\lambda\right)\right)
\end{align}
is analytic on $\Lambda_{\epsilon_1,\epsilon_2}$, and $\{\tilde\xi_{\pm\infty,k}(\lambda),\hspace{1mm}k=0,1\}=\{\xi_{\pm\infty,k}(\lambda),\hspace{1mm}k=0,1\}$.
\end{lem}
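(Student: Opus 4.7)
The plan is to verify analyticity by tracking how each of the two nested square roots in \eqref{analytic-growth-modes} interacts with the geometry of $\Lambda_{\epsilon_1,\epsilon_2}$, and then to check that the specific analytic branches reproduce the same unordered set of four growth modes described in Theorem \ref{thm-stability}. Set $c \bydef F'(n_+)F'(n_-)\in(0,1)$ for brevity.

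First I would handle the inner square root. The map $\lambda\mapsto c/\lambda$ is analytic on $\C\setminus\{0\}$, and writing $\lambda\in\Lambda_{\epsilon_1,\epsilon_2}$ as $re^{i\theta}$ with $\theta\in[-\epsilon_1,\pi+\epsilon_1]$, the argument of $c/\lambda$ lies in $[-\pi-\epsilon_1,\epsilon_1]$. Under the requisite inequalities (in practice assuming $\epsilon_1$ is small enough that this interval avoids $\pi/2$ modulo $2\pi$, which is precisely where $\Sqrt_{\pi/2}$ has its branch cut along the positive imaginary axis), the composition $w(\lambda)\bydef \Sqrt_{\pi/2}(c/\lambda)$ is then analytic on an open neighbourhood of $\Lambda_{\epsilon_1,\epsilon_2}$.

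Next, the requisite inequality $\epsilon_2<1-c$ gives $|\lambda|\geq 1-\epsilon_2>c$ throughout $\Lambda_{\epsilon_1,\epsilon_2}$, hence
\begin{align*}
|w(\lambda)| \;=\; \sqrt{c/|\lambda|} \;<\; 1.
\end{align*}
With this in hand I would apply the principal square root $\Sqrt$ to $1+(-1)^k w$ and verify that it avoids the principal branch cut $(-\infty,0]$. If, on the contrary, $1+(-1)^k w\in(-\infty,0]$, then $(-1)^k w = -1-s$ for some $s\geq 0$, forcing $|w|\geq 1$ and contradicting the above bound. Thus $\Sqrt(1+(-1)^k w)$, and in turn $\tilde\xi_{\pm\infty,k}$, is analytic on $\Lambda_{\epsilon_1,\epsilon_2}$.

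For the set equality, I would observe that the growth-mode expression in Theorem \ref{thm-stability} produces the unordered set $\{\pm \sigma_1\sqrt{1+\sigma_2\sqrt{c/\lambda}} : \sigma_1,\sigma_2\in\{\pm 1\}\}$, which is independent of branch choices: flipping the inner sign $\sigma_2$ merely interchanges the values indexed by $k=0$ and $k=1$, while flipping the outer sign is absorbed into the $\mp$ prefactor. Consequently $\{\tilde\xi_{\pm\infty,0}(\lambda),\tilde\xi_{\pm\infty,1}(\lambda)\} = \{\xi_{\pm\infty,0}(\lambda),\xi_{\pm\infty,1}(\lambda)\}$. Moreover, since the principal $\Sqrt$ maps $\C\setminus(-\infty,0]$ into the open right half-plane, the $\mp$ prefactor correctly reproduces the sign assignments $\Re(\xi_{1,k})<0<\Re(\xi_{0,k})$ in Theorem \ref{thm-stability}, identifying stable modes at $+\infty$ with unstable modes at $-\infty$.

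The main obstacle I expect is the argument-range analysis for $c/\lambda$ relative to the branch cut of $\Sqrt_{\pi/2}$: one must verify that the image of $\Lambda_{\epsilon_1,\epsilon_2}$ under $\lambda\mapsto c/\lambda$ is bounded away from the positive imaginary axis, which on close inspection needs a mild tightening of the constraint on $\epsilon_1$ beyond the nominal $\epsilon_1<\pi$ so that the closed set $\Lambda_{\epsilon_1,\epsilon_2}$ sits inside an open neighbourhood on which the inner branch is analytic. The outer square-root step, by contrast, follows cleanly from the algebraic inequality $|w|<1$ that is already forced by $\epsilon_2<1-c$.
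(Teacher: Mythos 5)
Your proposal is correct and follows essentially the same route as the paper's proof: show the inner argument $F'(n_+)F'(n_-)/\lambda$ avoids the positive imaginary axis on $\Lambda_{\epsilon_1,\epsilon_2}$, show $1+(-1)^k\Sqrt_{\pi/2}(F'(n_+)F'(n_-)/\lambda)$ avoids $(-\infty,0]$ so the principal root applies, and then identify the sets via uniqueness of square roots up to sign together with the right-half-plane property of the principal square root. The only differences are minor: you justify the second step by the direct bound $|w|<1$ coming from $\epsilon_2<1-F'(n_+)F'(n_-)$, whereas the paper cites Lemma \ref{lem-stability-ode-2} (whose proof rests on the same inequality), and your explicit observation that the inner-branch step actually requires $\epsilon_1<\pi/2$ (harmless here, since $\epsilon_1=0.1$ and $0.3$ are used) is a genuine point that the paper's proof passes over silently.
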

\begin{proof}
For $\lambda\in\Lambda_{\epsilon_1,\epsilon_2}$, we have $F'(n_+)F'(n_-)/\lambda\notin i\R^+$, so $\Sqrt_{\pi/2}(\cdot)$ is an analytic square root of  $F'(n_+)F'(n_-)/\lambda$. Since the growth modes are not imaginary -- see Lemma \ref{lem-stability-ode-2} -- we have $1+(-1)^k\Sqrt_{\pi/2}(F'(n_+)F'(n_-)/\lambda)\notin\R^-$, and it follows that $\Sqrt(\cdot)$ is an analytic square root on the range of $\Lambda_{\epsilon_1,\epsilon_2}\ni\lambda\mapsto1+(-1)^k\Sqrt_{\pi/2}(F'(n_+)F'(n_-)/\lambda)$. Since complex square roots are unique up to the reflection $z\mapsto -z$ and we have taken $\Sqrt$ to be the principal square root (which maps the upper and lower half-planes into themselves), it follows that $\tilde\xi_{\pm\infty,k}(\lambda)$ is equal to $\xi_{\pm\infty,k}(\lambda)$ up to a shift $k\mapsto k+1\mod 2$.
\end{proof}

\begin{rem}
A suitable choice of $\Log_{\pi/2}$, which we use in our numerical implementation, is given by $z\mapsto \log|z| + i\left(\Arg(iz) - \frac{\pi}{2}\right)$, where $\Arg$ denotes the principal argument.
\end{rem}

\begin{lem}\label{lem-analytic-bases}
In the following, $V_-(\lambda)$ is a basis for the unstable subspace of $A_{-\infty}(\lambda)$, and $V_{+}(\lambda)$ is a basis for the stable subspace of $A_{+\infty}(\lambda)$.
\begin{align}\label{analytic-bases}
V_\pm(\lambda)=\left\{\left(\begin{array}{c}1-\xi_{\pm\infty,k}^2 \\ (1-\xi_{\pm\infty,k}^2)\xi_{\pm\infty,k} \\ F'(n_\pm) \\ F'(n_\pm)\xi_{\pm\infty,k} \end{array}\right),\hspace{2mm}k=0,1\right\}=\{V_{\pm,k}(\lambda),\hspace{2mm}k=0,1\}.
\end{align}
If the analytic representation \eqref{analytic-growth-modes} of the growth modes is selected, $0<F'(n_-)F'(n_+)<1$ and $\epsilon_1,\epsilon_2$ satisfy the requisite inequalities, then the bases $V_-(\lambda)$ and $V_+(\lambda)$ defined in \eqref{analytic-bases} are analytic.
\end{lem}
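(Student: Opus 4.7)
The plan is to verify the eigenvector property by direct substitution, then deduce the spanning, independence, and analyticity statements from the explicit formula \eqref{analytic-growth-modes} together with the description of the limiting matrices.

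First I would write out $A_{\pm\infty}(\lambda)$ explicitly: since $N_0(t)\to n_\pm$ and $N_1(t)\to n_\mp$ as $t\to\pm\infty$ (from the asymptotic conditions stated in Theorem \ref{thm-logistic}), the matrix $A_{\pm\infty}(\lambda)$ is obtained from \eqref{evans-ode-z} by replacing $F'(N_0(t))$ with $F'(n_\pm)$ and $F'(N_1(t))$ with $F'(n_\mp)$. Then, for a candidate eigenvector $v=(a,b,c,d)^\top$ with eigenvalue $\xi$, the eigenvalue equation $A_{\pm\infty}(\lambda)v=\xi v$ reduces to
\begin{align*}
b=\xi a, \qquad d=\xi c, \qquad (1-\xi^2)a = F'(n_\mp)\lambda^{-1} c, \qquad F'(n_\pm) a = (1-\xi^2) c.
\end{align*}
Substituting $a=1-\xi^2$, $c=F'(n_\pm)$, the last two equations both collapse to the compatibility condition $(1-\xi^2)^2 = F'(n_+)F'(n_-)/\lambda$, which is exactly the characteristic equation derived in Lemma \ref{lem-stability-ode-2}. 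Setting $\xi=\tilde\xi_{\pm\infty,k}(\lambda)$ therefore makes $V_{\pm,k}(\lambda)$ in \eqref{analytic-bases} an eigenvector of $A_{\pm\infty}(\lambda)$ with the claimed growth mode.

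Next I would identify $V_-(\lambda)$ with the unstable subspace at $-\infty$ and $V_+(\lambda)$ with the stable subspace at $+\infty$. By construction in \eqref{analytic-growth-modes}, $\tilde\xi_{-\infty,k}(\lambda)$ is the principal square root $+\Sqrt(\cdot)$ and hence lies in the right half-plane, while $\tilde\xi_{+\infty,k}(\lambda)$ is its negative. For $\lambda\in\Lambda_{\epsilon_1,\epsilon_2}$ we have $|F'(n_+)F'(n_-)/\lambda|<1$ (this uses the inequality $\epsilon_2<1-F'(n_+)F'(n_-)$ and $|\lambda|\geq 1-\epsilon_2$), so $1+(-1)^k\Sqrt_{\pi/2}(F'(n_+)F'(n_-)/\lambda)$ lies strictly off $(-\infty,0]$, and in fact its principal square root has nonzero real part. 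Hence $\Re\tilde\xi_{-\infty,k}(\lambda)>0$ and $\Re\tilde\xi_{+\infty,k}(\lambda)<0$, giving the correct stability types. Linear independence of $\{V_{\pm,0}(\lambda),V_{\pm,1}(\lambda)\}$ then follows because the two growth modes are distinct on $\Lambda_{\epsilon_1,\epsilon_2}$: they would coincide only if $\Sqrt_{\pi/2}(F'(n_+)F'(n_-)/\lambda)=0$, which is impossible as $F'(n_+)F'(n_-)>0$ and $\lambda$ is bounded. Nonvanishing of $V_{\pm,k}(\lambda)$ is automatic since its third component is $F'(n_\pm)\neq 0$. Because the unstable subspace of $A_{-\infty}(\lambda)$ and the stable subspace of $A_{+\infty}(\lambda)$ are both two-dimensional (Lemma \ref{lem-stability-ode-2} together with point 3 in the preceding discussion), these two linearly independent eigenvectors in fact form bases.

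Finally, analyticity is an immediate consequence of Lemma \ref{lem-analytic-growth}: each component of $V_{\pm,k}(\lambda)$ is a polynomial in the analytic function $\tilde\xi_{\pm\infty,k}(\lambda)$ and in the constant $F'(n_\pm)$, so $\lambda\mapsto V_{\pm,k}(\lambda)$ is analytic on $\Lambda_{\epsilon_1,\epsilon_2}$. The most delicate step, and the only one that really uses the requisite inequalities, is the verification that the chosen branches in \eqref{analytic-growth-modes} produce growth modes of the correct sign (i.e. that the principal square root genuinely separates stable from unstable modes) on the full region $\Lambda_{\epsilon_1,\epsilon_2}$; everything else is a direct computation or an appeal to Lemma \ref{lem-analytic-growth}.
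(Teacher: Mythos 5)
Your proposal is correct and follows essentially the same route as the paper: verify via the identity $(1-\xi^{2})^{2}=F'(n_+)F'(n_-)/\lambda$ that the vectors in \eqref{analytic-bases} are eigenvectors of $A_{\pm\infty}(\lambda)$ for the appropriate growth modes (the paper does this by row-reducing $A-\xi I$ and computing its kernel, you by direct substitution), and obtain analyticity directly from Lemma \ref{lem-analytic-growth}. Your extra check that the chosen branches place $\tilde\xi_{-\infty,k}$ in the right half-plane and $\tilde\xi_{+\infty,k}$ in the left is a harmless re-verification of what Lemma \ref{lem-analytic-growth} and the naming convention for $\xi_{\pm\infty,k}$ already provide.
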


\begin{proof}
Analyticity of $V_{\pm}(\lambda)$ assuming the analytic growth mode follows directly from Lemma \ref{lem-analytic-growth}. That $V_{\pm}(\lambda)$ is a basis of $A_{\pm\infty}(\lambda)$ follows by explicitly computing the kernel of $A-\xi_{\pm\infty,k}I$. First, we have
{\small 
\begin{align*}
A-\xi I = \left(\begin{array}{cccc}
-\xi&1&0&0\\
 1&-\xi&-F'(n_{\mp})/\lambda&0\\
 0&0&-\xi&1\\
 -F'(n_\pm)&0&1&-\xi \end{array}\right) \sim 
\left(\begin{array}{cccc}
-\xi&1&0&0\\
 1-\xi^2&0&-F'(n_{\mp})/\lambda&0\\
 0&0&-\xi&1\\
 -F'(n_\pm)&0&1-\xi^2&0 \end{array}\right)
\end{align*}
}
where similarity $\sim$ is under elementary row operations, and we write $\xi=\xi_{\pm\infty,k}$ for compactness. Since $(1-\xi^2)^2 = F'(n_\pm)F'(n_\mp)/\lambda$, the second and fourth rows are dependent, so
{\small
\begin{align*}
A-\xi I \sim \left(\begin{array}{cccc}
-\xi&1&0&0\\
 -F'(n_\pm)&0&1-\xi^2&0\\
 0&0&-\xi&1\\
  0&0&0&0 \end{array}\right).
 \end{align*}
 }
Checking the kernel is now straightforward.
\end{proof}

\subsubsection{The Evans function and spectral stability}
Continuing to follow the presentation in \cite{BarkerHumpherys}, an Evans function can be defined theoretically by
\begin{align}
D(\lambda)&=\det([\begin{array}{cccc}W_0^-(t;\lambda)&W_1^-(t;\lambda)&W_0^+(t;\lambda)&W_1^+(t;\lambda)\end{array}])|_{t=0},
\end{align}
where the columns $W_k^{\pm}$ satisfy the following.
\begin{itemize}
\item $x\mapsto W_k^{-}(x;\lambda)$ for $k=0,1$ are two linearly independent solutions of \eqref{evans-ode-z} tangent as $t\rightarrow -\infty$ to the span of $V_-(\lambda)$.
\item $x\mapsto W_k^{+}(x;\lambda)$ for $k=0,1$ are two linearly independent solutions of \eqref{evans-ode-z} tangent as $t\rightarrow +\infty$ to the span of $V_+(\lambda)$.
\end{itemize}
Following our earlier preparations (Lemma \ref{lem-analytic-bases}), the Evans function is analytic \cite{BarkerHumpherys} on $\Lambda_{\epsilon_1,\epsilon_2}$ provided the requisite inequalities are satisfied. As consequence of Cauchy's argument principle, if $\gamma$ is a parameterization of the boundary $\partial\Lambda_{\epsilon_1,\epsilon_2}$ and $D\circ\gamma$ does not vanish, then the winding number of $D\circ\gamma$ around $0\in\C$ is equal to the number of zeroes (with multiplicity) of $D$ in $\Lambda_{\epsilon_1,\epsilon_2}$. By previous discussions, this is the same as the number of eigenvalues (with multiplicity) of $M$ in $\Lambda_{\epsilon_1,\epsilon_2}$, which includes all possible eigenvalues with absolute value greater than or equal to unity.

We defer computations of an Evans function with explicit error bounds sufficient to perform winding number calculations -- and subsequently to prove spectral stability -- to future work. For the time being, we content ourselves with the following approximation; see \cite{BarkerHumpherys} for details. We define, for $L>0$ fixed, an approximate Evans function $E_L$, as follows:
\begin{align}\label{approximate-Evans}
E_L(\lambda)=\det\left([\begin{array}{cc}X_{-L}(t;\lambda)V_-(\lambda)&X_{L}(t;\lambda)V_{+}(\lambda)\end{array}]\right)|_{t=0},
\end{align}
where $X_s(t;\lambda)$ is the Cauchy matrix associated with \eqref{evans-ode-z}. That is, $X_s(t)=U(t)U(s)^{-1}$ for any fundamental matrix solution $U$. Due to the exponential convergence of $A(t;\lambda)$ to $A_{\pm\infty}(\lambda)$ as $t\rightarrow\pm\infty$, the zeroes of $D$ can be approximated by those of $E_L$, for $L$ sufficiently large; see Section 3(f) of \cite{BarkerHumpherys} for details. In any case, we have the following lemma.

\begin{lem}\label{lem-approximate-Evans}
If the 2-cycle is analytic, $0<F'(n_+)F'(n_-)<1$ and $\epsilon_1,\epsilon_2$ satisfy the requisite inequalities, then the approximate Evans function $E_L$ defined in \eqref{approximate-Evans} by way of the bases $V_{\pm}(\lambda)$ from \eqref{analytic-bases}, is analytic on $\Lambda_{\epsilon_1,\epsilon_2}$ for any $L\geq0$.
\end{lem}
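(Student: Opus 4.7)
The plan is to establish analyticity of each factor appearing in the determinant defining $E_L$, and then invoke the fact that the determinant is a polynomial in its matrix entries. There are three ingredients: (i) analyticity of the bases $V_\pm(\lambda)$, (ii) analyticity of the matrix field $\lambda\mapsto A(t;\lambda)$ on $\Lambda_{\epsilon_1,\epsilon_2}$, uniformly in $t$ on compact sets, and (iii) a transfer of this analyticity to the Cauchy matrix $X_s(t;\lambda)$.

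First I would dispose of the easy pieces. Ingredient (i) is provided directly by Lemma~\ref{lem-analytic-bases} under the requisite inequalities and the hypothesis $0<F'(n_+)F'(n_-)<1$. For (ii), inspecting \eqref{evans-ode-z}, the only $\lambda$-dependent entry of $A(t;\lambda)$ is $-F'(N_1(t))\lambda^{-1}$, which depends on $\lambda$ only through $\lambda^{-1}$. Since $\Lambda_{\epsilon_1,\epsilon_2}$ is bounded away from $0\in\C$ (because $1-\epsilon_2>0$ by the requisite inequalities), the map $\lambda\mapsto A(t;\lambda)$ is analytic on $\Lambda_{\epsilon_1,\epsilon_2}$ for every fixed $t\in\R$, and the assumed analyticity of the 2-cycle ensures joint continuity in $(t,\lambda)$ with uniform bounds on compact sets $K\subset\Lambda_{\epsilon_1,\epsilon_2}$.

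The main step is (iii): to show that, for every fixed $s,t\in\R$, the Cauchy matrix $\lambda\mapsto X_s(t;\lambda)$ is analytic on $\Lambda_{\epsilon_1,\epsilon_2}$. This is a classical analytic dependence on parameters result, but I would prove it via Picard iteration to make the argument self-contained. Define $X^{(0)}(t;\lambda)\equiv I$ and
\begin{equation*}
X^{(n+1)}(t;\lambda)=I+\int_s^t A(\tau;\lambda)X^{(n)}(\tau;\lambda)\,d\tau.
\end{equation*}
By induction each $X^{(n)}(t;\lambda)$ is an analytic function of $\lambda$ on $\Lambda_{\epsilon_1,\epsilon_2}$, using that the integrand is jointly continuous in $(\tau,\lambda)$ and analytic in $\lambda$, so we may differentiate under the integral sign (or, equivalently, verify Morera's condition on small triangles using Fubini to exchange the triangle contour integral with the $\tau$-integral). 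The standard Gronwall-type estimate $\|X^{(n+1)}-X^{(n)}\|\le C_K^{n+1}|t-s|^{n+1}/(n+1)!$, with $C_K=\sup_{\tau\in[s,t],\,\lambda\in K}\|A(\tau;\lambda)\|$, gives uniform convergence on compact subsets of $\Lambda_{\epsilon_1,\epsilon_2}$. The uniform limit of analytic functions is analytic, so $\lambda\mapsto X_s(t;\lambda)$ is analytic on $\Lambda_{\epsilon_1,\epsilon_2}$.

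Finally, evaluating at $t=0$ and $s=\pm L$, the matrices $X_{\mp L}(0;\lambda)V_{\mp}(\lambda)$ are analytic $4\times 2$ matrix-valued functions of $\lambda$ by (i) and (iii). Concatenating them yields a $4\times 4$ matrix with entries analytic on $\Lambda_{\epsilon_1,\epsilon_2}$. Because $\det$ is a polynomial in its entries, $E_L(\lambda)$ is analytic on $\Lambda_{\epsilon_1,\epsilon_2}$, which is the desired conclusion. The main obstacle is the Picard step (iii), specifically justifying the interchange of the $\lambda$-holomorphy argument with the $\tau$-integration and proving that Gronwall-type bounds are uniform on compact $K\subset\Lambda_{\epsilon_1,\epsilon_2}$; both are routine once one notes that the compactness of $K$ combined with $K\subset\C\setminus\{0\}$ makes $\|A(\tau;\lambda)\|$ locally bounded in $\lambda$ uniformly in $\tau$ on any compact time interval.
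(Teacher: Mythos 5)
Your proof is correct and follows essentially the route the paper leaves implicit (the lemma is stated without a written proof, with the standard theory cited via the Evans-function references): analyticity of the bases from Lemma~\ref{lem-analytic-bases}, analytic dependence of the Cauchy matrix $X_{\pm L}(0;\lambda)$ on $\lambda$ — your Picard-iteration/Morera argument is the classical proof, and it applies because $\Lambda_{\epsilon_1,\epsilon_2}$ is bounded away from $0$ (since $\epsilon_2<1-F'(n_+)F'(n_-)<1$), so $\lambda\mapsto A(t;\lambda)$ is analytic there — together with the fact that the determinant is a polynomial in the matrix entries. A minor observation: for this lemma continuity of the 2-cycle in $t$ already suffices, so the analyticity hypothesis is used only weakly here; it is really needed for the exact Evans function $D$ rather than for $E_L$.
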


\subsection{Numerical spectral stability results}
In this section, we make use of the approximate Evans function \eqref{approximate-Evans} to investigate the spectral stability of 2-cycles of \eqref{IDE1}. We do this both for our proven 2-cycle, which uses the logistic growth function, as well as our yet-to-be-proven numerical candidate of the 2-cycle for the Ricker growth function.

Numerical implementation of the approximate Evans function $E_L$ defined in \eqref{approximate-Evans} is challenging. The numerical integration of the Evans system \eqref{evans-ode-z} is dominated by the most strongly unstable growth mode, which leads to poor resolution of the dynamics from slower modes. Since the dimension of  \eqref{evans-ode-z} is low, we resolve this issue using the compound matrix method \cite{Brin2000}, which is simple to implement. We briefly review that method now.

The Evans system \eqref{evans-ode-z} on $\C^4$ is lifted to an ordinary differential equation on the 2nd exterior power $\bigwedge^2(\C^4)$, which is isomorphic to $\C^6$. Specifically, if $e_1,\dots e_4$ denotes the standard ordered basis of $\C^4$, one constructs the six-element basis $B=\{e_i\wedge e_j : i<j,\hspace{1mm}i,j=1,\dots,4\}$ of $\bigwedge^2(\C^4)$. The isomorphism with $\C^6$ is realized by extending the following linearly:
\begin{align*}
B\ni e_i\wedge e_j\mapsto \left\{ \begin{array}{ll}
\tilde e_{j-1},&i=1\\
\tilde e_{j+1},&i=2\\
\tilde e_6,&i=3, \end{array}\right. 
\end{align*}
where $\tilde e_1,\dots,\tilde e_6$ is the standard ordered basis for $\C^6$. If we define $A^{(2)}(t;\lambda):\bigwedge^2(\C^4)\rightarrow \bigwedge^2(\C^4)$ by 
$$A^{(2)}(t;\lambda)v\wedge w = (A(t;\lambda)v)\wedge w + v\wedge (A(t;\lambda)w),$$ then one can check using anti-commutativity of the wedge product that, under the isomorphism $\bigwedge^2(\C^4)\cong\C^6$ defined previously, the matrix representation of $A^{(2)}$ is \footnotesize{
\begin{align}
A^{(2)}(t;\lambda)&=\left(\begin{NiceArray}{cccccc}[columns-width=20pt]
0&-F'(N_1(t))\lambda^{-1}&0&0&0&0\\
0&0&1&1&0&0\\
0&1&0&0&1&0\\
0&1&0&0&1&0\\
F'(N_0(t))&0&1&1&0&-F'(N_1(t))\lambda^{-1}\\
0&F'(N_0(t))&0&0&0&0
 \end{NiceArray}\right).
\end{align}}
\normalsize
See e.g. \cite{Blake-thesis} for details. One can then show \cite{Brin2000} that $E_L(\lambda)=\mathcal{W}^+(\lambda,0)\wedge\mathcal{W}^-(\lambda,0)$, where 
\begin{align}
\label{evans-BVP2-1}\partial_t\mathcal{W}^{\pm}(\lambda,t)&=A^{(2)}(t;\lambda)\mathcal{W}^{\pm}(\lambda,t),\\
\label{evans-BVP2-2}\mathcal{W}^{\pm}(\lambda,\pm L)&=V_{\pm,0}(\lambda)\wedge V_{\pm,1}(\lambda).
\end{align}
Note that in the equation $E_L(\lambda)=\mathcal{W}^+(\lambda,0)\wedge\mathcal{W}^-(\lambda,0)$, the latter wedge product is interpreted as an element of $\C$, since the wedge product of two elements in $\bigwedge^2(\C^4)$ is in $\bigwedge^4(\C^4)\cong\C$. The advantage of this formulation is that the system \eqref{evans-BVP2-1} has a single dominant asymptotic growth mode at $\pm\infty$, which allows it to be effectively isolated in each of the separate calculations of $\mathcal{W}^-$ and $\mathcal{W}^+$. Also, the growth modes are expressible in terms of the growth modes (or analytic representations; see \eqref{analytic-growth-modes}) of the original Evans system; see Lemma 3 of \cite{Brin2000}. At $\pm\infty$, the growth modes are $\zeta_{\pm}\bydef \xi_{\pm\infty,0} + \xi_{\pm\infty,1}.$ In our implementation, we attain good numerical stability by scaling out the dominant growth modes. Specifically, we modify \eqref{evans-BVP2-1}--\eqref{evans-BVP2-2} to
\begin{align}
\label{evans-BVP3-1}\partial_t\mathcal{U}^{\pm}(\lambda,t)&=(A^{(2)}(t;\lambda)- \zeta_{\pm}I)\mathcal{U}^{\pm}(\lambda,t)  ,\\
\label{evans-BVP3-2}\mathcal{U}^{\pm}(\lambda,\pm L)&=V_{\pm,0}(\lambda)\wedge V_{\pm,1}(\lambda).
\end{align}
One can show that $\mathcal{U}^+(\lambda,0)\wedge\mathcal{U}^-(\lambda,0)=kE_L(\lambda)$ for a positive real constant $k$. For winding number and root-finding calculations associated to the Evans function, this is sufficient. In subsequent sections, we therefore abuse notation and define $E_L(\lambda)=\mathcal{U}^+(\lambda,0)\wedge\mathcal{U}^-(\lambda,0)$.

We should mention that a MATLAB library has been developed \cite{Blake-thesis} to facilitate stability and Evans function calculations. In this work, we use Julia and implement the compound matrix method directly for our problem. We use Grassmann.jl \cite{Reed_Differential_geometric_algebra_2019} to facilitate the exterior algebra computations, and DifferentialEquations.jl with default tolerances for numerical integration. For the latter, the system automatically selected a stiff solver.

\subsubsection{Logistic growth function}\label{sec:logistic-evans}
For the logistic growth function, we can determine an explicit, tight bound for $\mu=||F'\circ N_0||_\infty||F'\circ N_1||_\infty$. Calculating this bound is fairly direct, since $F'(u)=1+\rho-2\rho u$ is affine-linear and the 2-cycle has been proven analytically, which facilitates a tight computation of the supremum norms. We find  $\mu \approx 3.6731$ and $F'(n_-)F'(n_+)\approx 0.1599$. Following Lemma \ref{lem-approximate-Evans}, we assume analyticity of the 2-cycle and select $\epsilon_1=\epsilon_2=0.1$, which satisfy the requisite inequalities. We numerically compute the winding number by evaluating $E_L(\lambda)$ for $L=8$ over a one-dimensional mesh with spacing $0.05$, parameterizing the boundary of $\Lambda_{\epsilon_1,\epsilon_2}$ in the positive orientation (i.e.\ the interior of the domain is on the ``left'' of the boundary). For the parameterization $\gamma$ of the boundary, we have plotted $E_L\circ\gamma$ in Figure \ref{fig:Evans-logistic}. It is difficult to resolve the winding number due to the proximity of the true zero of the Evans function $E(\lambda)$ at $\lambda=1$, so we have provided a zoomed-in view near zero. From there, we can verify from a visual inspection that the windng number is $+1$. Based on this, we conjecture that our proven 2-cycle of \eqref{IDE1}, with logistic growth, is spectrally stable. Also, we find that $E_L(1)=-4.486\times 10^{-7}$, reasonably close to numerical zero, as expected.

\begin{figure}
\includegraphics[scale=0.5]{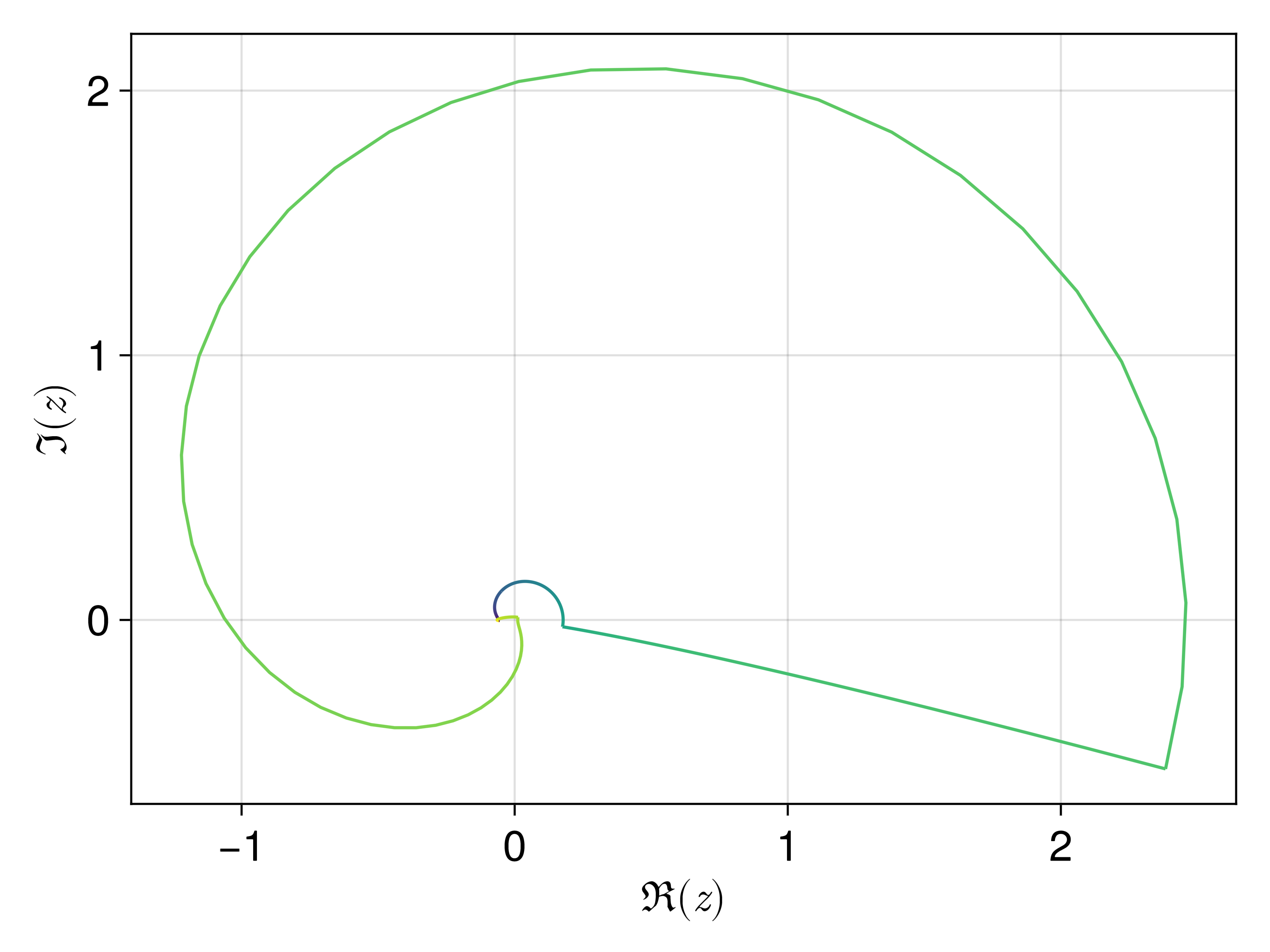}\includegraphics[scale=0.5]{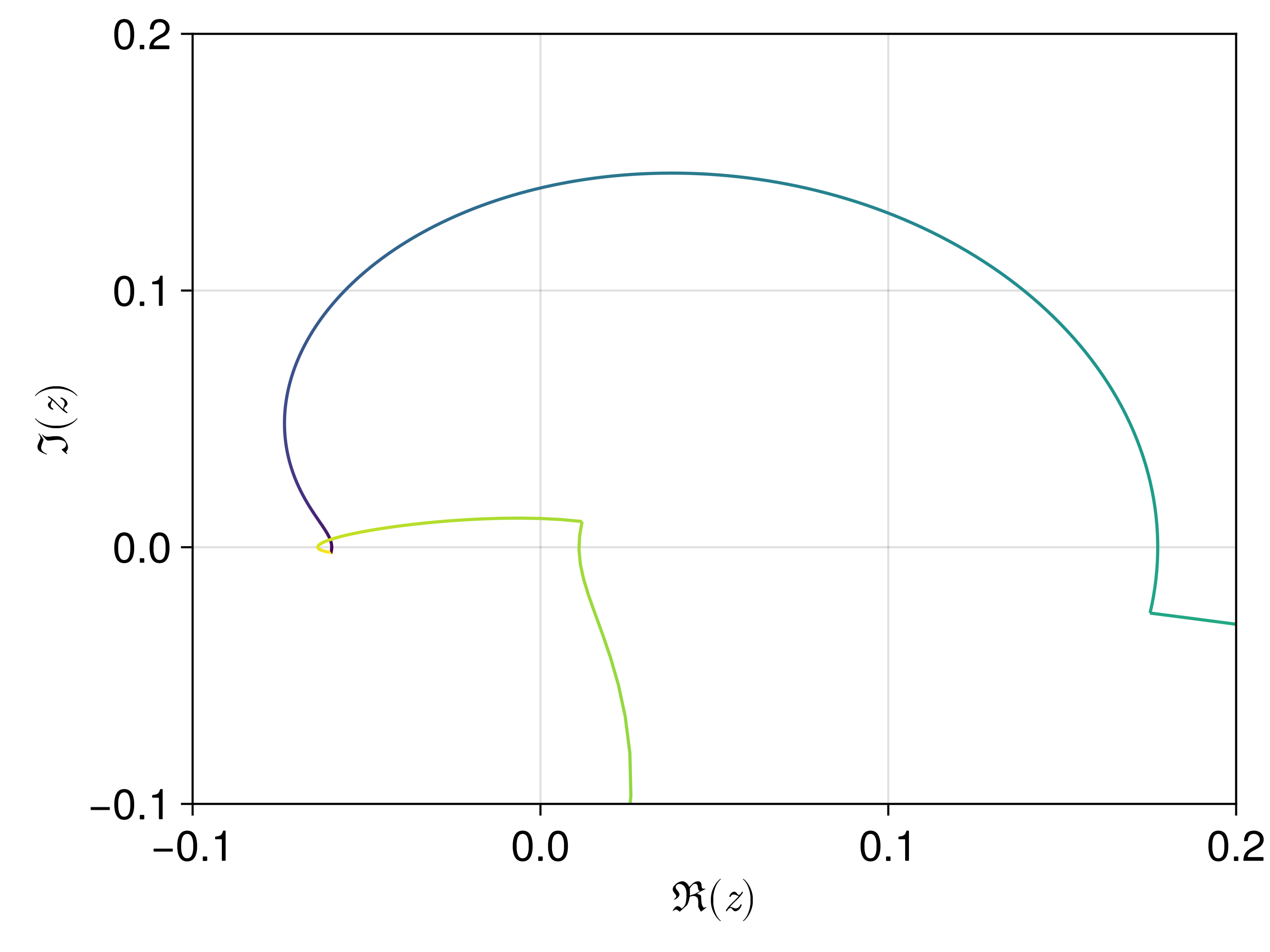}
\caption{Left: Plot the image of $[0,1]\ni t\mapsto E_L\circ\gamma(t)$, with $\gamma$ being a properly-oriented parameterization of the boundary of $\Lambda_{\epsilon_1,\epsilon_2}$, with $E_L$ calculated for the logistic 2-cycle. Bottom right: Zoomed-in near zero. In both cases, colour corresponds to $t\in[0,1]$; see Figure \ref{fig:Lambda-contour}.}\label{fig:Evans-logistic}
\end{figure}

\subsubsection{Ricker growth function}
The derivative of the Ricker map is $F'(u)=(1-\rho u)\exp(\rho(1-u))$. Since our 2-cycle is not yet proven, our bound for $\mu$ is subject to as-yet unquantified error, and we therefore rely solely on the numerically-computed finite-dimensional approximate solution. We find $\mu\approx 1.4918$ and $F'(n_+)F'(n_-)\approx 0.2157$. We select $\epsilon_1=\epsilon_2=0.3$, which satisfy the requisite inequalities. We once again use $L=8$ for the approximate Evans function calculation $E_L$, and we plot $E_L\circ\gamma$, for $\gamma$ parameterizing the boundary of $\Lambda_{\epsilon_1,\epsilon_2}$ in the positive orientation, with mesh spacing $0.05$. The result appears in Figure \ref{fig:Evans-ricker}. Visually inspecting, we can see that the winding number appears to be $+1$. We conjecture that \eqref{IDE1} with the Ricker growth function has a 2-cycle connecting $n_+$ and $n_-$, and that this 2-cycle is spectrally stable. Also, we find that $E_L(1)=-7.340\times 10^{-7}$, reasonably close to numerical zero, as expected.

\begin{figure}
\includegraphics[scale=0.5]{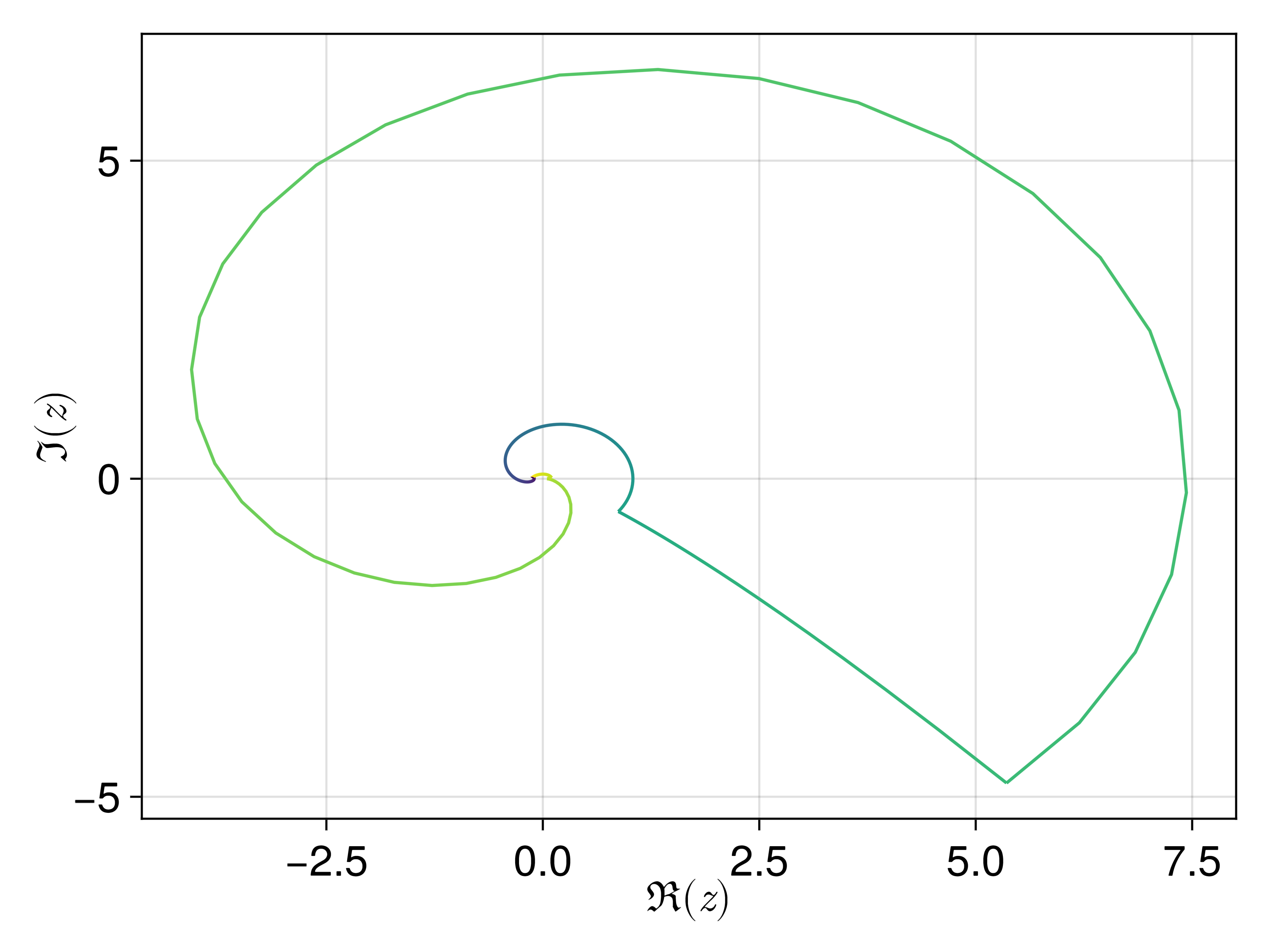}\includegraphics[scale=0.5]{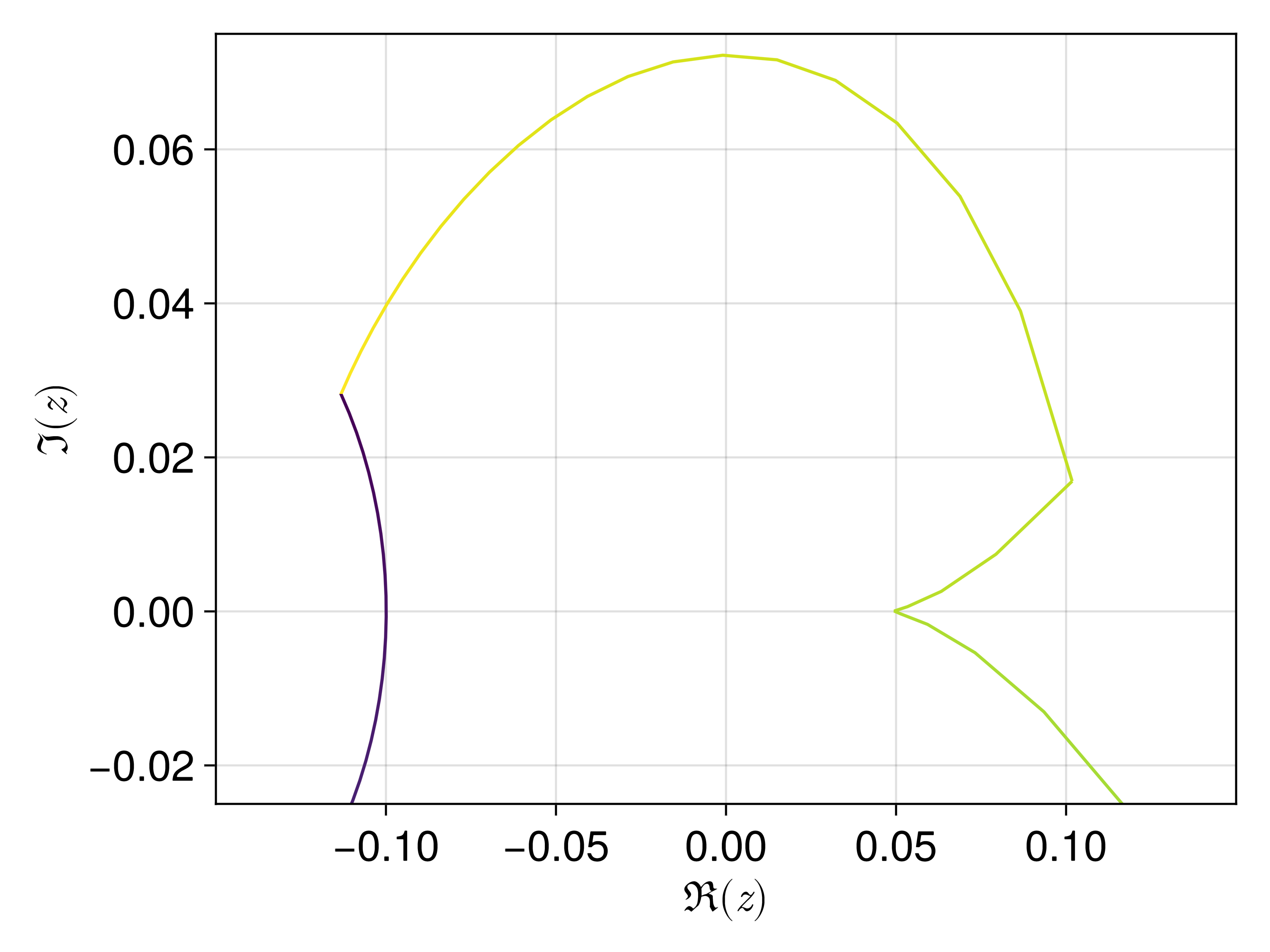}
\caption{Left: Plot the image of $[0,1]\ni t\mapsto E_L\circ\gamma(t)$, with $\gamma$ being a properly-oriented parameterization of the boundary of $\Lambda_{\epsilon_1,\epsilon_2}$, with $E_L$ calculated for the Ricker 2-cycle. Bottom right: Zoomed-in near zero. In both cases, colour corresponds to $t\in[0,1]$; see e.g.\ Figure \ref{fig:Lambda-contour}.}\label{fig:Evans-ricker}
\end{figure}

\section{Future work}\label{sec-future}
In this work, we have used the Evans function to count the number of unstable eigenvalues of the linearization $M=DS[N_0]$. While this does not constitute a proof of spectral stability, we are confident that a computer-assisted proof of such a result is feasible. 

It is all but guaranteed that Theorem \ref{thm-logistic} can be extended to accomodate the Ricker growth function. The approach taken in this work could be replicated for other growth functions. It is unclear whether our approach could apply for other dispersal kernels, since the transformation to an equivalent second-order ordinary differential equation \eqref{ODE1}--\eqref{ODE2} explicitly relies on a property of the Laplace kernel.

In 1992, based on some numerical simulations, Kot conjectured \cite{Kot1992} that the \eqref{IDE1} supports travelling 2-cycles. These solutions are essentially a combination of a 2-cycle and a travelling wave solution: they satisfy $N_{t+2}(x)=N_t(x-\tau)$ for some shift $\tau$, such that $N_{t+1}$ is not lateral shift of $N_t$. Later numerical work by Bourgeois, Leblanc and Lutscher \cite{Bourgeois2018} further supports this claim, although the wave profiles each set of authors found were different. One can show that a travelling 2-cycle of \eqref{IDE1} with the Laplace kernel is equivalent to a connecting orbit in the second-order system of delay (respectively advanced, if $\tau>0$) differential equations
\begin{align*}
\ddot x(t)&=\sigma^2(x(t)-F(y(t+\tau)))\\
\ddot y(t)&=\sigma^2(y(t)-F(x(t))).
\end{align*} 
There is some hope that the parameterization methods for stable and unstable manifolds of delay differential equations \cite{Groothedde2017,Hnot2023} could be used to also prove the existence of these travelling 2-cycles. 

\bibliographystyle{unsrt}
\bibliography{papers,library}

\end{document}